\newcommand{\GL}{\text{GL}}
\newcommand{\Q}{\mathbb{Q}}
\newcommand{\R}{\mathbb{R}}
\newcommand{\Z}{\mathbb{Z}}
\renewcommand{\H}{\mathbb{H}}
\newcommand{\PP}{\mathcal{P}}
\renewcommand{\l}{\Lambda}
\DeclareMathOperator{\Aut}{Aut}
\DeclareMathOperator{\Hom}{Hom}
\DeclareMathOperator{\id}{id}
\DeclareMathOperator{\rk}{rk}
\DeclareMathOperator{\rank}{rank}
\DeclareMathOperator{\End}{End}
\DeclareMathOperator{\Res}{Res}
\DeclareMathOperator{\Ext}{Ext}
\DeclareMathOperator{\LF}{LF}
\DeclareMathOperator{\Def}{def}
\DeclareMathOperator{\IM}{Im}
\DeclareMathOperator{\coker}{coker}
\DeclareMathOperator{\Syl}{Syl}
\DeclareMathOperator{\Alg}{Alg}
\DeclareMathOperator{\D}{D2}
\DeclareMathOperator{\PHT}{PHT}
\DeclareMathOperator{\Proj}{hProj}
\newtheorem{thm}{Theorem}[section]
\newtheorem*{thm*}{Theorem}
\newtheorem{prop}[thm]{Proposition}
\newtheorem*{prop*}{Proposition}
\newtheorem{lemma}[thm]{Lemma}
\newtheorem{cor}[thm]{Corollary}
\newtheorem*{cor*}{Corollary}
\newtheorem{conj}[thm]{Conjecture}
\theoremstyle{definition}
\newtheorem{question}[thm]{Question}
\newtheorem*{question*}{Question}
\newtheorem*{theorem*}{Theorem}
\theoremstyle{remark}
\newtheorem{remark}[thm]{Remark}
\newtheorem*{remark*}{Remark}
\newtheoremstyle{named}{}{}{\itshape}{}{\bfseries}{.}{.5em}{\thmnote{#3's }#1}
\theoremstyle{named}
\newtheorem*{d2}{D2 problem}
\newtheoremstyle{custom}{}{}{\itshape}{}{\bfseries}{.}{.5em}{\thmnote{#3}#1}
\theoremstyle{custom}
\newtheorem*{cproblem}{}
\newcommand{\wh}{\widehat}
\newcommand{\wt}{\widetilde}
\begin{document}

\title{On CW-complexes over groups with periodic cohomology}

\author{John Nicholson}
\address{Department of Mathematics, UCL, Gower Street, London, WC1E 6BT, U.K.}
\email{j.k.nicholson@ucl.ac.uk}

\subjclass[2020]{Primary 57K20; Secondary 20C05, 57P10, 57Q12}


\begin{abstract}
If $G$ has $4$-periodic cohomology, then finite D2 complexes over $G$ are determined up to polarised homotopy by their Euler characteristic if and only if $G$ has at most two one-dimensional quaternionic representations. We use this to solve Wall's D2 problem for several infinite families of non-abelian groups and, in these cases, also show that any finite Poincar\'{e} $3$-complex $X$ with $G=\pi_1(X)$ admits a cell structure with a single $3$-cell. 
The proof involves cancellation theorems for $\Z G$ modules where $G$ has periodic cohomology.
\vspace{-5mm}
\end{abstract}

\maketitle

\section{Introduction}

In the 1960s, C. T. C. Wall defined a D$n$ complex to be a CW-complex $K$ that $H_i(\wt K)=0$ for $i > n$ and $H^{n+1}(K; M)=0$ for all finitely generated $\Z [\pi_1(K)]$-modules $M$. 
Wall showed that a finite D$n$ complex is homotopy equivalent to a finite $n$-complex provided $n > 2$ \cite{Wa65}. This was extended to the case $n=1$ by Stallings-Swan \cite{St68, Sw69}, but the question for $n=2$ remains completely open:

\begin{d2}
Is every finite {\normalfont D2} complex homotopy equivalent to a finite $2$-complex?
\end{d2}

We say that a finitely presented group $G$ has the \textit{$D2$ property} if every finite D2 complex $X$ with $\pi_1(X)=G$ is homotopy equivalent to a finite $2$-complex. 

In \cite{Wa67}, Wall applied this to the question of whether a finite Poincar\'{e} $n$-complex $X$ could be given a cell structure with a single $n$-cell $e^n$. He showed that
\[ X = K \cup e^n \]
where $K$ is a finite D$(n-1)$-complex and so, if $K$ is homotopy equivalent to a finite $(n-1)$-complex, then $X=K \cup e^n$ gives a cell structure with a single $n$-cell.
In particular, if $n \ne 3$, then every finite Poincar\'{e} $n$-complex admits a cell structure with a single $n$-cell. The case $n=3$ remains open though would be true provided the fundamental groups of finite Poincar\'{e} $3$-complexes had the D2 property.

Since the finite fundamental groups of Poincar\'{e} $3$-complexes have $4$-periodic cohomology, the D2 property for such groups is of special interest.
In fact, it was conjectured by J. M. Cohen that all other groups have the D2 property \cite[p415]{Co77}. 
In light of this, groups with 4-periodic cohomology have been the source of many proposed counterexamples \cite{Co77, Jo04, BW05}.
The only cases previously known to have the D2 property are cyclic, dihedral, or binary polyhedral \cite{Dy75, Jo03a}.

Our main result is the following partial classification of finite D2 complexes whose fundamental group has 4-periodic cohomology.
Let $m_{\H}(G)$ denote the number of copies of $\H$ in the Wedderburn decomposition of $\R G$ for a finite group $G$. 

\begingroup
\setcounter{thm}{0}
\renewcommand\thethm{\Alph{thm}}
\begin{thm} \label{thm:D2}
Let $G$ have $4$-periodic cohomology. Then finite {\normalfont D2} complexes $X$ with $\pi_1(X) = G$ are determined up to (polarised) homotopy by their Euler characteristic if and only if $m_{\H}(G) \le 2$.
\end{thm}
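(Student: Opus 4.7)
The plan is to translate the polarised-homotopy classification of D2 complexes into a purely algebraic cancellation problem over $\Z G$, and then to match the failure of that cancellation with the number $m_\H(G)$ of quaternionic factors of $\R G$.

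First, I would use the standard equivalence between polarised-homotopy classes of D2 complexes with $\pi_1 = G$ and equivalence classes of algebraic $2$-complexes over $\Z G$. Combined with Schanuel's lemma and the $4$-periodicity of $G$, this means that for any D2 complex $X$ the module $\pi_2(X)$ is stably isomorphic to the fixed module $J := \Omega^3 \Z$, and the Euler characteristic of $X$ determines the $\Z G$-rank of $\pi_2(X)$. The theorem therefore reduces to the following module-theoretic assertion: two D2 complexes of the same Euler characteristic are polarised-homotopy equivalent if and only if their $\pi_2$-modules, each of the form $J \oplus F$ with $F$ stably free of a prescribed rank, are isomorphic as $\Z G$-modules; equivalently, all stably free modules of rank $\ge 1$ in the stable class of $J$ are free.

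Second, I would establish this cancellation statement with the threshold $m_\H(G) \le 2$. If $m_\H(G) = 0$ then $\Q G$ satisfies the Eichler condition and the result is the classical Swan--Jacobinski cancellation. When $m_\H(G) \in \{1,2\}$ the Eichler condition fails, but one can refine the adelic/locally-free-class-group argument: reduced norms into one or two totally definite quaternion summands of $\Q G$ are still surjective enough (via strong approximation on the unit groups of orders in those $\H$-factors) to force every stably free class to be trivial. Conversely, for $m_\H(G) \ge 3$ I would appeal to Swan's construction of exotic stably free modules detected by reduced norms into three or more $\H$-factors, produce a non-free stably free $F$ of the requisite rank, realise $J \oplus F$ as $\pi_2$ of some D2 complex by a standard attachment procedure, and verify that this complex is not polarised-homotopy equivalent to the ``standard'' D2 complex of the same Euler characteristic because the discrepancy is detected by the quaternionic reduced-norm invariant.

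The principal obstacle is the cancellation theorem in the borderline cases $m_\H(G) = 1$ or $2$: the Eichler condition fails, so one cannot cite the classical results, and one must work explicitly with orders in totally definite quaternion algebras and their locally free class groups to see that up to two such factors the obstruction collapses whereas three do not. The threshold $2$ is genuinely arithmetic rather than technical, reflecting where strong approximation stops being enough. The $\Rightarrow$ direction, while less delicate, still requires care to ensure Swan's obstruction lands in the correct rank range and to lift the module-level non-isomorphism back to a failure of polarised homotopy equivalence.
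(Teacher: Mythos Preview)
Your overall plan—pass from D2 complexes to algebraic $2$-complexes and then to a module cancellation question—is the same as the paper's. Two points, however, are either imprecise or genuinely incomplete.

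First, the reduction is not to stably free cancellation. The paper shows that $\Alg_G$ is isomorphic as a graded tree to the projective class $[\chi(g)]$, where $\chi(g)\in\widetilde K_0(\Z G)$ is the Wall finiteness obstruction of a periodicity generator $g\in H^4(G;\Z)$. This obstruction need not vanish (e.g.\ for $Q(16;3,1)$ the free period is $8$, not $4$), so for the families (V) and (VI) the relevant cancellation is in a non-trivial projective class. Handling this requires working modulo the Swan subgroup $T(\Z G)$ and a separate argument that $\chi(g)\otimes_{\Z N}\Z$ lands in $T(\Z H)$ for a suitable quotient $H$. Your phrase ``all stably free modules of rank $\ge 1$ in the stable class of $J$ are free'' conflates two different stable classes and does not accommodate this. (You also assert that $\pi_2$ is a complete invariant of the polarised homotopy type; this is true here but is itself a consequence of the tree isomorphism $\Alg_G\cong[\chi(g)]$, not an input to it.)

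Second—and this is the real gap—your cancellation argument for $m_\H(G)\in\{1,2\}$ is too vague to succeed as stated. The threshold $2$ is \emph{not} a general fact about orders with at most two totally definite quaternion summands that one can read off from strong approximation on the $\H$-factors. The paper's argument is structural: one first uses the classification of $4$-periodic groups to show that every such $G$ with $m_\H(G)\le 2$ admits a binary polyhedral quotient $H$ with $m_\H(G)=m_\H(H)$; cancellation for $\Z H$ is then Swan's explicit case-by-case computation for the seven groups $Q_8,Q_{12},Q_{16},Q_{20},\widetilde T,\widetilde O,\widetilde I$; and finally cancellation is \emph{lifted} from $\Z H$ to $\Z G$ via a Milnor-square comparison whose key hypothesis is the surjectivity of $\Aut(P)\to K_1(\Z H)$, supplied by a theorem of Magurn--Oliver--Vaserstein. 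None of these three ingredients—the classification step, Swan's computations, or the $K_1$-lifting theorem—appears in your sketch, and a direct adelic argument on $\Z G$ itself does not obviously yield the threshold $2$.
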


Here we assume that each finite D2 complex $X$ comes equipped with an isomorphism $\rho_X : \pi_1(X) \cong G$ and, for pairs $(X,\rho_X)$ and $(Y,\rho_Y)$, we say that a homotopy equivalence $h: X \to Y$ is a polarised homotopy equivalence if $\rho_Y \circ \pi_1(h) \circ \rho_X^{-1} = \id_G$. 

Recall that every finite $2$-complex $X$ with $\pi_1(X) = G$ is homotopy equivalent to the Cayley complex $X_{\mathcal{P}}$ of some presentation $\mathcal{P} = \langle s_1, \cdots, s_n \mid r_1, \cdots, r_m \rangle$ of $G$, and that $\chi(X_{\mathcal{P}}) = 1 - \Def(\mathcal{P})$ where $\Def(\mathcal{P}) = n-m$ is the \textit{deficiency} of $\mathcal{P}$. If $G$ is finite, then $\Def(\mathcal{P}) \ge 0$ (see, for example, \cite[Corollary 1.3]{Sw65}). In \cref{thm:main1} we show that, for every group $G$ with 4-periodic cohomology, there exists a finite D2 complex $X$ for which $\pi_1(X) = G$ and $\chi(X) = 1$. In particular, we will show:

\renewcommand\thethm{\Alph{thm}}
\begin{cor} \label{thm:D2-2}
Let $G$ have $4$-periodic cohomology. Then:
\begin{enumerate}[\normalfont(i)]
\item If $G$ has the {\normalfont D2} property, then $G$ has a balanced presentation.
\item If $G$ has a balanced presentation and $m_{\H}(G) \le 2$, then $G$ has the {\normalfont D2} property.
\end{enumerate}
\end{cor}

This builds upon the work of F. E. A. Johnson \cite{Jo03a} who considered the special case where there exists a 4-periodic free resolution of $\Z G$ modules. In particular, for such groups $G$, he proved (i) and also obtained a version of (ii) subject to stably free $\Z G$ modules being free (which we resolve in \cref{thm:main5}).
 
This has the following consequence for the question of whether a finite Poincar\'{e} $3$-complex has a cell structure with a single $3$-cell: 
 
\renewcommand\thethm{\Alph{thm}}
\begin{cor} \label{thm:poincare}
Let $X$ be a finite Poincar\'{e} $3$-complex with $G=\pi_1(X)$ finite. Then:
\begin{enumerate}[\normalfont(i)]
\item If $X$ has a cell structure with a single $3$-cell, \hspace{-0.5mm}then $G$ \hspace{-0.5mm}has a balanced presentation.
\item If $G$ has a balanced presentation and $m_{\H}(G) \le 2$, then $X$ has a cell structure with a single $3$-cell.
\end{enumerate}
\end{cor}
\endgroup
\setcounter{thm}{0} 
 
In \cref{thm:4-periodic-list}, we show that the groups $G$ with $4$-periodic cohomology for which $m_{\H}(G) \le 2$ are as follows. Here we use the notation of Milnor \cite{Mi57}, and each family contains $G \times C_n$ for any $G$ listed and any $n \ge 1$ coprime to $|G|$.
\begin{enumerate}[\normalfont(i)]
\item $C_n$ for $n \ge 1$, the cyclic groups of order $n$.
\item $D_{4n+2}$ for $n \ge 1$, the dihedral groups of order $4n+2$.
\item $Q_8, Q_{12},Q_{16},Q_{20},\widetilde{T},\widetilde{O},\widetilde{I}$.
\item $D(2^n,3)$, $D(2^n,5)$ for $n \ge 3$.
\item $P_{8 \cdot 3^n}'$ for $n \ge 2$.
\item $P_{48 n}''$ for $n \ge 3$ odd.
\item $Q(16;m,n)$ for $m > n \ge 1$ odd coprime.
\end{enumerate}

By considering which of these groups have balanced presentations, we will show the following. This was previously shown by M. N. Dyer \cite{Dy75} for the groups in (i) and by Johnson \cite{Jo03a} for the groups in (ii) and for many of the groups in (iii).

\begingroup
\renewcommand\thethm{\ref{thm:D2-property}}
\begin{thm}
Suppose $G$ is in $\text{\normalfont(i)}$-$\text{\normalfont(v)}$ or has the form $Q(16;n,1) \times C_k$ for some $n, k \ge 1$ odd coprime. Then $G$ has the {\normalfont D2} property.
\end{thm}

Finally, we consider the case $m_{\H}(G) \ge 3$. Let $Q_{4n}$ denote the quaternion group of order $4n$, which has 4-periodic cohomology and $m_{\H}(Q_{4n}) = \lfloor n/2 \rfloor$. By combining recent results of W. H. Mannan and T. Popiel \cite{MP21} with Theorem \ref{thm:main0}, we show: 

\renewcommand\thethm{\ref{thm:q28}}
\begin{thm}
$Q_{28}$ has the {\normalfont D2} property and $m_{\H}(Q_{28})=3$.
\end{thm}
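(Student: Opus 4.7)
To prove both claims, I would split the work. For $m_{\H}(Q_{28})=3$, the Frobenius--Schur indicator does the job. Since $n=7$ is odd, the abelianisation of $Q_{28}$ is $C_4$, giving four one-dimensional complex characters; the remaining six two-dimensional irreducibles $\rho_k$ $(k=1,\dots,6)$ have $\chi_k(x^j)=2\cos(\pi jk/7)$ and vanish on $x^j y$. Every element of the form $x^j y$ squares to $y^2=x^7$, so a direct calculation gives $\nu(\chi_k)=(-1)^k$, whence $\rho_1,\rho_3,\rho_5$ are of quaternionic type. Combined with the real decomposition of the one-dimensional part (two copies of $\R$ and one of $\C$, but no $\H$), this yields $m_{\H}(Q_{28})=3$.

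Because $m_{\H}(Q_{28})=3>2$, Theorem \ref{thm:D2-2}(ii) cannot be applied, so exhibiting a single balanced presentation does not immediately establish the D2 property. The plan is to use the full strength of Theorems \ref{thm:main0} and \ref{thm:main1}, which together give an isomorphism of graded trees $\D_{Q_{28}}\cong[\chi(g)]$; the D2 property for $Q_{28}$ then amounts to showing that every vertex of $[\chi(g)]$ is realised by a finite $2$-complex. Since wedging with $S^2$ accounts for all higher-rank vertices, it suffices to realise each rank-one projective in $[\chi(g)]$ as the $\pi_2$ of the Cayley complex of some balanced presentation of $Q_{28}$.

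Next I would enumerate the rank-one projectives in $[\chi(g)]$ for $G=Q_{28}$. By Theorem \ref{thm:main5} there is more than one isomorphism class, and the exact count should be computable via the Milnor pullback square for $\Z Q_{28}$, the explicit Wedderburn decomposition of $\Q Q_{28}$, reduced norms to the noncommutative factors, and the $K_1/SK_1$ information of the type used in \cite{MOV83}. Because $n=7$ is prime, these computations are relatively clean and should yield a short, explicit list of stably equivalent but pairwise non-isomorphic projectives $P_1,\dots,P_k$ exhausting the minimal vertices of $\D_{Q_{28}}$.

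The real work, and the main obstacle, is the geometric realisation step: for each $P_i$ one must exhibit a balanced presentation $\mathcal{P}_i$ of $Q_{28}$ whose Cayley complex has $\pi_2\cong P_i$. The standard presentation $\langle x,y\mid x^7=y^2,\ yxy^{-1}=x^{-1}\rangle$ handles one class; for the other classes I would search for twisted balanced presentations of $Q_{28}$, obtained for instance by replacing a relator with a suitable product of its conjugates under the free generators, or by Tietze-type moves that preserve the balanced property, and then identify the associated projective by a direct syzygy computation. Matching the resulting geometric invariants against the algebraic list $P_1,\dots,P_k$ is where both inventiveness in producing presentations and careful module-theoretic bookkeeping are required; if every $P_i$ is hit then the realisation theorem bundled into Theorems \ref{thm:main0} and \ref{thm:main1} completes the proof of the D2 property.
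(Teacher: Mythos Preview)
Your overall strategy is exactly the one the paper uses: identify $\D_{Q_{28}}$ with $[\chi(g)]$ via Theorems~\ref{thm:main0} and~\ref{thm:main1}, count the minimal vertices, and then realise each one by a balanced presentation. The Frobenius--Schur calculation for $m_{\H}(Q_{28})=3$ is also fine, though the paper simply quotes the general formula $m_{\H}(Q_{4n})=\lfloor n/2\rfloor$.

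What your proposal is missing, however, is not an idea but the two concrete inputs that turn the plan into a proof, and both are already in the literature. First, since $Q_{28}$ has free period~$4$, one may take $\chi(g)=[\Z Q_{28}]$, so the rank-one projectives in $[\chi(g)]$ are exactly the rank-one stably free $\Z Q_{28}$-modules; by Swan \cite[Theorem~III]{Sw83} there are \emph{exactly two} of them. You therefore do not need to set up the Milnor square and redo the $K$-theory computation yourself. Second, the ``real work'' you flag---producing a second balanced presentation whose Cayley complex is not homotopy equivalent to the standard one---has been carried out by Mannan and Popiel (Theorem~\ref{thm:mannan-popiel}): they exhibit an explicit presentation $\mathcal{P}_2$ with $\pi_2(X_{\mathcal{P}_2})\not\cong\pi_2(X_{\mathcal{P}_1})$ as $\Z Q_{28}$-modules. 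Since distinct $\pi_2$'s force the corresponding minimal vertices of $\D_{Q_{28}}$ to be distinct, $X_{\mathcal{P}_1}$ and $X_{\mathcal{P}_2}$ hit both minimal vertices, and the non-minimal ones are handled by wedging with copies of $S^2$. That is the entire proof.

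Two small corrections. It is not the projectives $P_i$ themselves that appear as $\pi_2$; the module $\pi_2(X)$ lies in $\Omega_3(\Z)$, and the passage to $[\chi(g)]$ is through the duality of Lemma~\ref{lemma:duality-chain}. What you actually need, and what Mannan--Popiel provide, is that the two $\pi_2$'s are non-isomorphic, which under $\Phi$ forces the two presentations to land on different minimal projectives. Also, your proposed search for ``twisted'' balanced presentations is exactly the kind of thing that, historically, resisted attack for a long time; the Mannan--Popiel presentation is the first known exotic presentation of any finite non-abelian group, so this step should be cited rather than re-discovered.
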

\endgroup

This group was proposed as a counterexample in \cite{BW05} (see also \cite[p23]{MR18}). We also point out that the example of Mannan-Popiel gives a counterexample to one part of the conjecture of J. M. Cohen mentioned previously \cite[p381]{Wa79a}. The possibility remains that some group with 4-periodic cohomology does not have a balanced presentation and so would be a counterexample to the D2 problem.

We now proceed to outline the series of results which will lead to the proof of \cref{thm:D2}. 
\textit{From now on, we will take $G$ to be a finitely presented group and we will assume all $\Z G$ modules are finitely generated left modules.}

\subsection{Reduction to algebra} \label{subsection:1.1}
Let $\D(G)$ denote the set of polarised homotopy types of finite D$2$ complexes $X$ with $\pi_1(X) = G$. This is a graded graph with grading given by the Euler characteristic $\chi(X)$ and edges between between each $(X,\rho)$ and $(X \vee S^2, \rho^+)$ where $\rho^+$ is induced by the collapse map $X \vee S^2 \to X$. 

Let $\Alg(G,2)$ denote the set of chain homotopy classes of algebraic $2$-complexes over $\Z G$, which are chain complexes $(F_*,\partial_*)$ of the form
\[
F_2 \xrightarrow[]{\partial_2} F_1 \xrightarrow[]{\partial_1} F_0
\]
where the $F_i$ are free and $H_0(F_*) \cong \Z$ where $\Z$ has trivial $G$-action.
 This is a graded graph with grading $\chi(E)= \rank(F_2) - \rank(F_1)+\rank(F_0)$ and edges between each $E=(F_*, \partial_*)$ and the stabilised complex $E \oplus \Z G = (F_2 \oplus \Z G \xrightarrow[]{(\partial_2,0)} F_1 \xrightarrow[]{\partial_1} F_0)$.

In Section \ref{section:PHT}, we will establish the following:

\begingroup
\renewcommand\thethm{\ref{thm:main0}}
\begin{thm} 
Let $G$ be a finitely presented group. Then there exists an isomorphism of graded trees
\[ \widetilde{C}_* : \D(G) \to \Alg(G,2)\]
which is the same as the cellular chain map $X \mapsto C_*(\widetilde{X})$ when $X$ is a $2$-complex.
\end{thm}
\endgroup

This is presumably well known (see, for example, \cite[Theorem 4]{Wa66}), though we could not locate an equivalent statement in the literature. 
The proof we give is an adaption of the proof of the Realisation Theorem of Johnson \cite{Jo03b}.

Recall that, if $P$ is a projective $\Z G$ module, then $P \otimes \Q \cong \Q G^r$ is a free $\Q G$ module \cite{Sw60a}, and we define the rank of $P$ to be $\rank(P) = r$.
Let $C(\Z G)$ denote the projective class group and observe that a class $[P] \in C(\Z G)$ can also be viewed as the set of (non-zero) projective $\Z G$ modules $P_0$ for which $P \oplus \Z G^i \cong P_0 \oplus \Z G^j$ for some $i, j \ge 0$. 
This has the structure of a graded graph with grading given by $\rank(P_0)$ and edges between each $P_0$ and $P_0 \oplus \Z G$. 

For a group $G$ with $n$-periodic cohomology, let $\sigma_n(G) \in C(\Z G)/T_G$ denote the \textit{Swan finiteness obstruction} where $T_G \le C(\Z G)$ denotes the Swan subgroup (see \cref{section:preliminaries}).
By adapting an approach of Johnson \cite[Theorem 62.1]{Jo03a}, we show:

\begingroup
\renewcommand\thethm{\ref{thm:main1}}
\begin{thm}
Let $G$ have $4$-periodic cohomology. Then there is an isomorphism of graded trees
\[ \Psi: \Alg(G,2) \to [P_G]\]
for any projective $\Z G$ module $P_G$ for which $\sigma_4(G) = [P_G] \in C(\Z G)/T_G$.
\end{thm}
\endgroup

An explicit description for the map $\Psi$ can be found in \cref{prop:Psi-explicit}.

\subsection{Cancellation for projective modules} \label{subsection:1.2}

If $P$ is a projective $\Z G$ module, then we say that \textit{$[P]$ has cancellation} if $P_1 \oplus \Z G \cong P_2 \oplus \Z G$ implies $P_1 \cong P_2$ for all $P_1, P_2 \in [P]$. It was shown by H. Jacobinski \cite[Theorem 4.1]{Ja68} that $[P]$ has cancellation provided $G$ is finite and satisfies the \textit{Eichler condition}, i.e. $m_{\H}(G)=0$. 
The case $m_{\H}(G) \ne 0$ was studied extensively by Swan \cite{Sw62, Sw83}. 
However, in spite of the wide applicability of these results, 
the only groups with 4-periodic cohomology to which they apply are the groups in (i) and (ii). 

The main contribution of this article is the following general cancellation theorem for projective $\Z G$ modules when $G$ is a finite group. Let $\LF_1(\Z G)$ denote the set of projective $\Z G$ modules of rank one and recall from algebraic K-theory that $K_1(\Z G)=\GL(\Z G)^{ab}$ where $\GL(\Z G) = \bigcup_n \GL_n(\Z G)$.

\begingroup
\renewcommand\thethm{\ref{thm:main2}}
\begin{thm}
Let $G$ be a finite group with quotient $H=G/N$, let $\bar{P} \in \LF_1(\Z G)$ and let $P = \bar{P} \otimes_{\Z N} \Z \in \LF_1(\Z H)$. Suppose $m_{\H}(G)=m_{\H}(H)$ and that the map 
\[ \Aut (P) \to K_1(\Z H)\]
is surjective. Then $[\bar{P}]$ has cancellation if and only if $[P]$ has cancellation.
\end{thm}
\endgroup

This generalises a result of the author \cite[Theorem A]{Ni21} which dealt with the case of stably free $\Z G$ modules, i.e. where $\bar{P} = \Z G$ and $P = \Z H$.
 
Our first application generalises the main result in \cite{Sw83}. We say that $\Z G$ has \textit{stably free cancellation} (SFC) if every stably free $\Z G$ module is free or, equivalently, if the class $[\Z G]$ has cancellation.

\begingroup
\renewcommand\thethm{\ref{thm:main4}}
\begin{thm}
Let $G$ have periodic cohomology. 
Then $\Z G$ has {\normalfont SFC} if and only if $m_{\H}(G) \le 2$.
\end{thm}
\endgroup

This completely determines the groups $G$ with periodic cohomology for which $\Z G$ has SFC and also corrects a mistake in \cite[p249]{Jo03a} where it was suggested that the groups in (vii) did not have SFC. Our second application is the following:

\begingroup
\renewcommand\thethm{\ref{thm:main5}}
\begin{thm} \label{thm:main5}
Let $G$ have $4$-periodic cohomology and let $P_G$ be a projective $\Z G$ module for which $\sigma_4(G) = [P_G] \in C(\Z G)/T_G$. 
Then $[P_G]$ has cancellation if and only if $m_{\H}(G) \le 2$.
\end{thm}
\endgroup

By Theorems \ref{thm:main0} and \ref{thm:main1}, $\D(G)$ is isomorphic to $[P_G]$ as a graded tree. Hence \cref{thm:main5} implies that $\D(G)$ has cancellation if and only if $m_{\H}(G) \le 2$. This completes the proof of \cref{thm:D2}. For a more detailed argument, see \cref{section:cancellation-periodic-groups}.

\section{Polarised homotopy types and algebraic 2-complexes} \label{section:PHT}

The aim of this section will be to prove the following. Our proof will be a mild generalisation of the arguments of Johnson in \cite{Jo03b}.

\begingroup
\begin{thm} \label{thm:main0}
Let $G$ be a finitely presented group. Then there exists an isomorphism of graded trees
\[ \widetilde{C}_* : \D(G) \to \Alg(G,2)\]
which is the same as the cellular chain map $X \mapsto C_*(\widetilde{X})$ when $X$ is a $2$-complex.
\end{thm}
\endgroup
 
We begin by noting that every finite D2 complex is a finite D3 complex and so is homotopy equivalent to a finite 3-complex by \cite[Theorem E]{Wa65}. We therefore lose no generality in assuming throughout that every finite D2 complex is a finite 3-complex. 
Let $(X,\rho) \in \D(G)$ and consider the cellular chain complex
\[
\begin{tikzcd}
	C_*(X) = (C_3(\widetilde{X}) \ar[r,"\partial_3"] & C_2(\widetilde{X}) \ar[r,"\partial_2"] & C_1(\widetilde{X}) \ar[r,"\partial_1"] & C_0(\widetilde{X}))
\end{tikzcd}
\]
where the $C_i(\wt X)$ are free $\Z[\pi_1(X)]$ modules under the monodromy action. We can use $\rho$ to identify this with a chain complex of $\Z G$ modules which we denote $C_*(X,\rho)$. We will now show the following which is also our definition for $\wt C_*$:

\begin{prop} \label{prop:C_*-well-def} Let $(X,\rho) \in \D(G)$. Then $C_*(X,\rho)$ is chain homotopy equivalent to an algebraic $2$-complex $E$ over $\Z G$. In particular, we define $\wt C_*(X,\rho) = E$.
\end{prop}

In order to prove this, we will need the following two lemmas. Note that, since $\IM(\partial_{3}) \subseteq \ker(\partial_2)$, there is a well-defined map $\wt \partial_2 : C_2(\wt X)/\IM(\partial_{3}) \to C_{1}(\wt X)$.

\begin{lemma}[\text{\cite[Proposition 6.6]{Jo03b}}] \label{lemma:chain-homotopy-equiv}
Let $(X,\rho) \in \D(G)$. Then there is a chain homotopy equivalence $\varphi: C_*(X,\rho) \to C'_*(X,\rho)$ where
\[
\begin{tikzcd}
	C'_*(X,\rho) = ( C_2(\widetilde{X})/\IM(\partial_{3}) \ar[r,"\widetilde{\partial}_2"] & 
C_{1}(\widetilde{X}) \ar[r,"\partial_{1}"] &
C_0(\widetilde{X})).
\end{tikzcd}
\] 
\end{lemma}

\begin{lemma}[\text{\cite[Proposition 6.5]{Jo03b}}] \label{lemma:sf} Let $(X,\rho) \in \D(G)$ and let $C_*(X,\rho) = (C_*(\widetilde{X}),\partial_*)_{0 \le * \le 3}$. Then $C_2(\widetilde{X})/\IM(\partial_3)$ is a stably free $\Z G$ module.
\end{lemma}

\begin{proof}[Proof of \cref{prop:C_*-well-def}]
By \cref{lemma:sf}, there exists $i, j \ge 0$ for which there is an isomorphism $f: C_2(\widetilde{X})/\IM(\partial_{3}) \oplus \Z G^i \cong \Z G^j$. We can now define a chain homotopy equivalence
\[
\begin{tikzcd}[row sep=.5cm, column sep=small]
C'_*(X,\rho) \ar[d,"f_*"] \\
E
\end{tikzcd}
= \left( 
\begin{tikzcd}[row sep=.5cm, column sep=small]
C_2(\widetilde{X})/\IM(\partial_{3})  \ar[d,"f \circ \text{$(\id,0)$}"] \ar[r,"\widetilde{\partial}_2"] & 
C_{1}(\widetilde{X}) \ar[d,"\text{$(\id,0)$}"] \ar[r,"\partial_{1}"] &
C_0(\widetilde{X}) \ar[d,"\id"] \\
\Z G^j \ar[r,"\text{$(\partial_{2},0)$} \circ f^{-1}"] & C_{1}(\wt X) \oplus \Z G^i \hspace{2mm} \ar[r,"\text{$(\partial_{1},0)$}"]  &
C_0(\widetilde{X})
\end{tikzcd}
\right)
\]
where $E$ is an algebraic $2$-complex over $\Z G$. Hence, by combining with \cref{lemma:chain-homotopy-equiv}, we obtain a chain homotopy equivalence $f_* \circ \varphi : C_*(X,\rho) \to E$.
\end{proof}

We now turn to the proof of \cref{thm:main0}. By \cref{prop:C_*-well-def}, $\wt C_*$ is a well-defined map and it is clear from the definition that $\wt C_*(X \vee S^n) \simeq \wt C_*(X) \oplus \Z G$ and so $\wt C_*$ gives a map of graded graphs. Note that $\Alg(G,2)$ is a tree \cite[Section 52]{Jo03a}.
In particular, $\wt C_*$ is an isomorphism of graded trees if and only if it is bijective.
There are many proofs in the literature that $\wt C_*$ is surjective (see \cite[Theorem 4]{Wa66} or \cite[Theorem 2.1]{Ma09}) and so it remains to show that $\widetilde{C}_*$ is injective.

We will now need the following two lemmas. The first is proven in the case where $X$ and $Y$ are finite 2-complexes in \cite[Proposition 2.2]{Jo03b}. The proof for finite D2 complexes is similar and so will be omitted for brevity.

\begin{lemma} \label{lemma:chain-map-1-skeleton}
Let $(X,\rho_X)$, $(Y,\rho_Y) \in \D(G)$ be such that $X^{(1)} = Y^{(1)}$. If $\nu : C_*(X,\rho_X) \to C_*(Y,\rho_Y)$ is a chain map, then $\nu$ is chain homotopy equivalent to a chain map $\varphi$ such that $\varphi \mid_{C_i(\widetilde{X})} = \id$ for $i \le 1$.
\end{lemma}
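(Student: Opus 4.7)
The plan is a standard zig-zag construction of a chain homotopy, worked out degree-by-degree from the bottom. The hypothesis $X^{(1)} = Y^{(1)}$, interpreted in the $G$-polarised category, gives a canonical identification of the 1-skeleta of the universal covers $\widetilde{X}$ and $\widetilde{Y}$ as free $G$-CW complexes, so that $C_i(\widetilde{X}) = C_i(\widetilde{Y})$ as $\Z G$-modules for $i \le 1$ and the identity map in these degrees makes sense. I will assume $\nu$ is an augmented chain map (if not, one first adjusts by a chain homotopy using that $\Z$ is the cokernel of $\partial_1$).

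First I would construct $h_0 : C_0(\widetilde{X}) \to C_1(\widetilde{Y})$ satisfying $\partial_1^Y \circ h_0 = \id - \nu_0$. Since $\nu$ augments to $\id_\Z$, one has $\partial_0^Y \circ (\id - \nu_0) = 0$, and hence $\IM(\id - \nu_0) \subseteq \Ker(\partial_0^Y) = \IM(\partial_1^Y)$ by exactness of the augmented cellular chain complex of $\widetilde{Y}$ at $C_0(\widetilde{Y})$. Because $C_0(\widetilde{X})$ is a free $\Z G$-module, one can pick a $\Z G$-equivariant lift $h_0$ by choosing preimages of the generators. Replacing $\nu_0$ by $\nu_0 + \partial_1^Y h_0$ then yields the identity in degree $0$.

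Next I would construct $h_1 : C_1(\widetilde{X}) \to C_2(\widetilde{Y})$ with $\partial_2^Y \circ h_1 = \id - \nu_1 - h_0 \partial_1^X$. A short calculation, using $\partial_1^Y \nu_1 = \nu_0 \partial_1^X$ together with the defining property of $h_0$, shows that
\[ \partial_1^Y \bigl( \id - \nu_1 - h_0 \partial_1^X \bigr) = \partial_1^X - \nu_0 \partial_1^X - (\id - \nu_0)\partial_1^X = 0, \]
so the right-hand side lands in $\Ker(\partial_1^Y) = \IM(\partial_2^Y)$; here we use exactness at $C_1(\widetilde{Y})$, which holds because $\widetilde{Y}$ is simply-connected. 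Freeness of $C_1(\widetilde{X})$ again provides an equivariant lift.

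Finally, extend by $h_i = 0$ for $i \ge 2$ and set $\varphi = \nu + \partial h + h\partial$. Then $\varphi$ is automatically a chain map chain homotopic to $\nu$ (this is true for any graded map $h$ of degree $+1$, regardless of whether $h$ is a genuine chain homotopy between two pre-specified maps), and the constructions of $h_0, h_1$ force $\varphi_0 = \varphi_1 = \id$. There is no real obstacle here: the argument is routine homological algebra, the only bookkeeping point being that equivariance is automatic because every lift is taken on a $\Z G$-free module.
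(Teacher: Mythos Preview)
Your argument is correct and is precisely the standard zig-zag construction one expects here; the paper itself omits the proof entirely, citing \cite[Proposition 2.2]{Jo03} for the finite $2$-complex case and asserting the general case is similar, so there is nothing substantive to compare against. One small caveat: your parenthetical claim that an arbitrary chain map can be adjusted by a chain homotopy to become augmented is not literally true (for instance $\nu = 0$ cannot be made homotopic to a map inducing $\id$ on $H_0 = \Z$), so the lemma tacitly assumes $\nu$ lies over $\id_\Z$ --- but this is automatic in the only place the lemma is invoked, where $\nu$ is obtained from a chain homotopy equivalence of algebraic $2$-complexes.
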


Let $\PHT(G,2) \subseteq \D(G)$ be the subgraph corresponding to the polarised homotopy types of finite $2$-complexes.

\begin{lemma}[\text{\cite[Lemma 2.3]{Jo03b}}] \label{lemma:realise-chain-map}
Let $(X,\rho_X)$, $(Y,\rho_Y) \in \PHT(G,2)$ be such that $X^{(1)} = Y^{(1)}$. If $\varphi : C_*(X,\rho_X) \to C_*(Y,\rho_Y)$ is a chain map such that $\varphi \mid_{C_i(\widetilde{X})} = \id$ for $i \le 1$, then there exists a map $f: X \to Y$ such that $f_*=\varphi_*$, $f \mid_{X^{(1)}} = \id$ and $\rho_X = \rho_Y \circ \pi_1(f)$.
\end{lemma}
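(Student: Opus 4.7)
The plan is to build $f$ by a standard cell-by-cell extension argument, using the freedom in choosing each extension to match the prescribed chain map $\varphi$. Write $Z = X^{(1)} = Y^{(1)}$ and define $f|_Z = \id_Z$ (viewed as $Z \hookrightarrow Y$). Since the polarisations $p_X$ and $p_Y$ agree when restricted to $\pi_1(Z)$, this already ensures that $p_X = p_Y \circ \pi_1(f)$ at the level of $\pi_1(Z)$; as $\pi_1(Z) \twoheadrightarrow \pi_1(X)$ is surjective (because $X$ is obtained from $Z$ by attaching $2$-cells), verifying this on $Z$ will suffice for the polarisation clause once $f$ is defined on all of $X$.

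For the extension, pick one $2$-cell $e$ per $G$-orbit of $2$-cells in $\widetilde{X}$, with attaching map $\phi_e : S^1 \to Z$. The class $[\phi_e] \in \pi_1(Z)$ is killed by $p_X$, hence also by $p_Y$, so $\phi_e$ becomes null-homotopic in $Y$ and extends to some $\bar\phi_e : D^2 \to Y$. Lifting to $\widetilde{Y}$ produces a $2$-chain $c_e \in C_2(\widetilde{Y})$ satisfying $\partial_2^Y(c_e) = \partial_2^X([e])$. The target chain $\varphi_2([e])$ has the same boundary, because $\varphi$ is a chain map and $\varphi_1 = \id$; therefore
\[ d_e := \varphi_2([e]) - c_e \in \ker(\partial_2^Y). \]

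Now I would correct the extension by a sphere. Since $Y$ is a $2$-complex, its universal cover $\widetilde{Y}$ is a simply-connected $2$-complex, so by Hurewicz
\[ \pi_2(Y) = \pi_2(\widetilde{Y}) \cong H_2(\widetilde{Y}) = \ker(\partial_2^Y). \]
Choose $\alpha_e : S^2 \to Y$ representing $d_e$, and replace $\bar\phi_e$ by the composite $D^2 \xrightarrow{\text{pinch}} D^2 \vee S^2 \xrightarrow{\bar\phi_e \vee \alpha_e} Y$. Additivity of the Hurewicz map gives a new extension whose associated chain is $c_e + d_e = \varphi_2([e])$. Transporting equivariantly across each $G$-orbit assembles these extensions into a map $f : X \to Y$ with $f|_{X^{(1)}} = \id$ and $f_* = \varphi_*$ on $C_2$; the polarisation identity then follows from the opening paragraph.

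The main technical obstacle is the chain-to-geometry translation in the correction step: one must verify that the Hurewicz correspondence genuinely identifies $\ker(\partial_2^Y)$ with the set of wedge-sum modifications of a fixed extension, and that this can be done compatibly with the $G$-action (so that one choice per orbit really does determine an equivariant map on $\widetilde{X}$). Everything else is bookkeeping, standard obstruction-theoretic extension, and the identification $\pi_2 = H_2$ for $2$-complexes.
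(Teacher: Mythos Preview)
The paper does not actually prove this lemma: it merely records it as \cite[Lemma 2.3]{Jo03} and moves on. Your argument is the standard cell-by-cell extension-plus-correction proof (extend over each $2$-cell using nullhomotopy in $Y$, then adjust by an element of $\pi_2(Y)\cong\ker\partial_2^Y$ to hit the prescribed value of $\varphi_2$), which is exactly how Johnson proves it; so there is nothing to compare beyond noting that you have supplied the omitted details correctly.

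One small point worth making explicit in your write-up: the claim that ``the polarisations $p_X$ and $p_Y$ agree when restricted to $\pi_1(Z)$'' is not an extra hypothesis but is forced by the assumption that $\varphi$ is a $\Z G$-chain map with $\varphi_0=\varphi_1=\id$, since this identifies the $G$-actions on the cells of $\widetilde Z$ coming from the two covers $\widetilde X\supset\widetilde Z\subset\widetilde Y$. Once that is said, the compatibility of lifts needed for $\partial_2^Y(c_e)=\partial_2^X([e])$ and the equivariant transport across orbits are unproblematic.
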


\begin{proof}[Proof of Theorem \ref{thm:main0}]
Let $(X,\rho_X)$, $(Y,\rho_Y) \in \D(G)$ and note that, by the argument of \cite[Proposition 2.1]{Jo03b}, we can assume that $X^{(1)}=Y^{(1)}$ by replacing each space with a polarised homotopy equivalent space.
Suppose there is a chain homotopy 
$ \widetilde{\nu}: \widetilde{C}_*(X) \to \widetilde{C}_*(Y).$ 
By Lemma \ref{lemma:chain-homotopy-equiv}, this lifts to a chain homotopy $\nu: C_*(X,\rho_X) \to C_*(Y,\rho_Y)$ and, by Lemma \ref{lemma:chain-map-1-skeleton}, this is chain homotopy equivalent to a chain homotopy $\varphi: C_*(X,\rho_X) \to C_*(Y,\rho_Y)$ such that $\varphi \mid_{C_i(\widetilde{X})} = \id$ for $i \le 1$.

Let $i_X : X^{(2)} \hookrightarrow X$ denote the inclusion and note that this induces a $\Z G$ chain map $(i_X)_*: C_*(X^{(2)}) \to C_*(X)$ where the $2$-skeleton $X^{(2)}$ comes equipped with the polarisation $\rho_{X^{(2)}} = \rho_X \circ \pi_1(i_X)$, and similarly for $Y^{(2)}$.
Since $(\varphi \circ i_X)_3 = 0$, the composition $\varphi_* \circ (i_X)_*: C_*(X^{(2)}) \to C_*(Y)$ can be viewed as a chain map 
\[ \varphi_* \circ (i_X)_*: C_*(X^{(2)}) \to C_{* \le 2}(Y) \cong C_*(Y^{(2)}).\]
Since $(\varphi \circ i_X)_* = \id$ for $* \le 1$, Lemma \ref{lemma:realise-chain-map} implies that there exists a map $f: X^{(2)} \to Y^{(2)}$ such that $f_*=\varphi_* \circ (i_X)_*$, $f \mid_{X^{(1)}} = \id$ and $\rho_{X^{(2)}} = \rho_{Y^{(2)}} \circ \pi_1(f)$. By composing with $i_Y$, we can assume $f: X^{(2)} \to Y$ which instead has that $\rho_{X^{(2)}} = \rho_Y \circ \pi_1(f)$.

We now claim that $f$ has an extension $F: X \to Y$ such that $F_* = \varphi_*: H_2(\widetilde{X}) \to H_2(\widetilde{Y})$, which is an isomorphism since $\varphi_*$ is a homology equivalence. Since $X$ and $Y$ are finite D2 complexes, we have that $H_i(\widetilde{X}) = H_i(\widetilde{Y})=0$ for $i \ne 2$. This implies that $F$ is a homology equivalence and so is a homotopy equivalence by Whitehead's theorem. Since $F \circ i_X = f$ and $\rho_{X^{(2)}} = \rho_{Y} \circ \pi_1(f)$, this implies that $\rho_X = \rho_Y \circ \pi_1(F)$ and so $F$ is the required polarised homotopy equivalence from $(X,\rho_X)$ to $(Y,\rho_Y)$.

To find the extension $F$, first let
\[ X = X^{(2)} \cup_{\alpha_1} e_1^3 \cup_{\alpha_2} \cdots \cup_{\alpha_n} e_n^3\]
for $3$-cells $e_i^3 \cong D^3$ and attaching maps $\alpha_i \in \pi_2(X^{(2)})$, where such a decomposition exists since $X$ is assumed to be a finite $3$-complex. 

Using cellular chains, we have that $\partial_3(e_i^3) = \alpha_i$ where we are using the identification $\IM(\partial_3) \subseteq \ker(\partial_2) \cong \pi_2(X^{(2)})$, and so $\alpha_i \in \IM(\partial_3)$ for all $i = 1, \dots, n$.
Note that there is a commutative diagram
\[
\begin{tikzcd}
  \pi_2(X^{(2)}) \ar[r,"(i_X)_*"] \ar[d,"\cong"] & \pi_2(X) \ar[d,"\cong"] \\
\ker(\partial_2) \ar[r,"q"] & \ker(\partial_2)/\IM(\partial_3)
\end{tikzcd}
\]
where $q$ is the quotient map. This shows that $\IM(\partial_3) = \ker((i_X)_*)$. 
Consider the composition $f_*=\varphi_* \circ (i_X)_* : \pi_2(X^{(2)}) \to \pi_2(Y)$. Since $\varphi_*$ is a homology equivalence, this implies that $\ker((i_X)_*) = \ker(f_*)$. By combining with the above two results, we get that $\alpha_i \in \ker(f_*)$ and so the maps $f \circ \alpha_i \in \pi_2(Y)$ are nullhomotopic for all $i = 1, \dots, n$. 

By standard homotopy theory, this implies that there exists an extension $F: X \to Y$. In particular, since $f \circ \alpha_i : S^2 \to Y$ is null-homotopic, there is a map $f_i : e_i^3 \to Y$ for which $f_i \circ i = f \circ \alpha_i$ for $i : S^2 = \partial e_i^3 \hookrightarrow e_i^3$ and so we can get a well-defined map $F: X \to Y$ by defining $F \mid_{e_i^3} = f_i$ for each $i = 1, \dots, n$.
Finally note that, by the above diagram, $(i_X)_* : \pi_2(X^{(2)}) \to \pi_2(X)$ is surjective. Since $F_* \circ (i_X)_* = \varphi_* \circ (i_X)_*$ for $* \le 2$, this implies that $F_* = \varphi_*: \pi_2(X) \to \pi_2(Y)$ or, equivalently, that $F_* = \varphi_* :H_2(\widetilde{X}) \to H_2(\widetilde{Y})$.
\end{proof}

We conclude this section by noting that \cref{thm:main0} can be used to recover two well-known results on the D2 problem.
Firstly, since $\D(G)$ and $\Alg(G,2)$ are isomorphic as graphs and $\Alg(G,2)$ is a tree, this implies that $\D(G)$ is a tree. Since $\D(G)$ contains a presentation complex over $G$, this leads to the following stable solution to the D2 problem which was first proved by J. M. Cohen \cite{Co77}.

\begin{cor}
	If $X$ is a finite {\normalfont D2} complex, then there exists $n \ge 0$ for which $X \vee nS^2$ is homotopy equivalent to a finite $2$-complex.
\end{cor}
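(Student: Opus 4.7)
The plan is to exploit the tree structure of $\D_G$ established above together with the fact that $\D_G$ already contains some finite 2-complex. Let $G=\pi_1(X)$ and fix any polarisation $p_X : \pi_1(X,*) \to G$; since $X$ is a D2 complex, it is homotopy equivalent to a finite 3-complex and so $G$ is finitely presented. Choose a finite presentation $\mathcal{P}$ of $G$ with Cayley complex $X_{\mathcal{P}}$, equipped with its canonical polarisation $p_{\mathcal{P}}$, so that both $(X, p_X)$ and $(X_{\mathcal{P}}, p_{\mathcal{P}})$ are vertices of $\D_G$.

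The structural observation to exploit is that each vertex $(W, p_W) \in \D_G$ has a \emph{unique} edge going up in the $\chi$-grading, namely the edge to $(W \vee S^2, (p_W)^+)$. Consequently the up-ray $(W, W \vee S^2, W \vee 2S^2, \ldots)$ is well defined. Now $\D_G$ is a tree by Theorem \ref{thm:main0} together with \cite[Corollary 8.10]{Jo12}, hence connected, so there is a unique reduced path from $(X, p_X)$ to $(X_{\mathcal{P}}, p_{\mathcal{P}})$. Along this path we can never take an up-edge immediately after a down-edge, since that up-edge would be the same edge we just descended; this forces the path to first ascend from $(X, p_X)$ along its up-ray until it meets the up-ray of $(X_{\mathcal{P}}, p_{\mathcal{P}})$ at some common vertex $(Z, p_Z)$, and then descend along the latter's up-ray in reverse to $(X_{\mathcal{P}}, p_{\mathcal{P}})$.

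Unwinding this, there exist integers $n, m \ge 0$ with
\[
X \vee n S^2 \,\simeq\, Z \,\simeq\, X_{\mathcal{P}} \vee m S^2
\]
as polarised spaces. The right-hand side is a finite 2-complex, being the Cayley complex of the presentation obtained from $\mathcal{P}$ by adjoining $m$ trivial relators, so $X \vee n S^2$ is homotopy equivalent to a finite 2-complex, as required.

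The only nontrivial step is the tree-theoretic argument in the second paragraph, which uses both ingredients of Theorem \ref{thm:main0}: connectedness of $\D_G$ places $(X, p_X)$ and $(X_{\mathcal{P}}, p_{\mathcal{P}})$ in the same component, while acyclicity, combined with the fact that wedging with $S^2$ yields a uniquely determined up-neighbour, pins the path down to the claimed shape. Everything else is formal, and polarised homotopy equivalence of course implies ordinary homotopy equivalence.
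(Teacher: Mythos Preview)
Your proof is correct and follows essentially the same approach as the paper: both use that $\D_G$ is a tree (via Theorem~\ref{thm:main0} and \cite[Corollary~8.10]{Jo12}) containing the Cayley complex of a presentation of $G$, and conclude that $X$ and $X_{\mathcal{P}}$ must meet after stabilising. The paper states this in a single sentence and leaves the tree-combinatorics implicit; you have simply spelled out the shape of the reduced path (up-edges then down-edges, forced by uniqueness of the up-neighbour), which is a welcome clarification but not a different argument.
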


Recall that an algebraic $2$-complex over $\Z G$ is \textit{geometrically realisable} if it is chain homotopy equivalent to the cellular chain complex $C_*(X,\rho)$ of a polarised finite $2$-complex $(X,\rho)$. This begs the following question:

\begin{cproblem}[Realisation problem]
Let $G$ be a finitely presented group. Then is every algebraic $2$-complex over $\Z G$ geometrically realisable?
\end{cproblem}

Note that every algebraic $2$-complex over $\Z G$ is geometrically realisable if and only if $C_* : \PHT(G,2) \to \Alg(G,2)$ is surjective. By Theorem \ref{thm:main0}, this is the case if and only if $\PHT(G,2) = \D(G)$, i.e. if $G$ has the D2 property. This implies the following which was first proven by Johnson \cite{Jo03b} subject to a mild condition on $G$ which was subsequently shown to be unnecessary by Mannan \cite{Ma09}.

\begin{cor} \label{thm:D2=realisation}
	A finitely presented group $G$ has the {\normalfont D2} property if and only if every algebraic $2$-complex over $\Z G$ is geometrically realisable. In particular, the {\normalfont D2} problem and Realisation problem are equivalent.
\end{cor}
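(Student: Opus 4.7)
The plan is to leverage Theorem \ref{thm:main0} to translate the geometric statement about D2 complexes into the algebraic statement about realisability, by comparing the natural inclusion $\CW_G \hookrightarrow \D_G$ with the chain-level map into $\Alg_G$.

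First I would observe that, by definition, an algebraic 2-complex $E \in \Alg_G$ is geometrically realisable precisely when $E$ lies in the image of the cellular chain map $C_* : \CW_G \to \Alg_G$. Since for a finite 2-complex $X$ we have $C_3(\widetilde{X}) = 0$ and hence $\IM(\partial_3) = 0$, the map $\widetilde{C}_*$ of Theorem \ref{thm:main0} restricts on $\CW_G$ to the ordinary cellular chain map. So the set of geometrically realisable algebraic 2-complexes is exactly $\widetilde{C}_*(\CW_G) \subseteq \Alg_G$.

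Next, I would apply Theorem \ref{thm:main0}, which gives that $\widetilde{C}_* : \D_G \to \Alg_G$ is a bijection. Under this bijection, geometric realisability of every algebraic 2-complex amounts to the equality $\widetilde{C}_*(\CW_G) = \widetilde{C}_*(\D_G)$, which by injectivity of $\widetilde{C}_*$ is equivalent to $\CW_G = \D_G$ as subsets of $\D_G$.

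Finally, I would unpack the definition of the D2 property: $G$ has the D2 property if and only if every D2 complex over $G$ is homotopy equivalent to a finite 2-complex, and a routine argument shows that this is equivalent to every polarised homotopy type in $\D_G$ being the polarised homotopy type of a finite 2-complex, i.e.\ $\CW_G = \D_G$. Combining these two equivalences yields the corollary. I do not anticipate any serious obstacle here; the only subtlety is checking that the D2 property, which is phrased in terms of unpolarised homotopy equivalence, is equivalent to $\CW_G = \D_G$ at the polarised level, but this follows because any homotopy equivalence $X \simeq Y$ with $Y$ a finite 2-complex can be upgraded to a polarised equivalence after precomposing $Y$'s polarisation with an automorphism of $G$, and the resulting polarised finite 2-complex still lies in $\CW_G$.
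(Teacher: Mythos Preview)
Your proposal is correct and follows essentially the same approach as the paper: both arguments observe that geometric realisability of every algebraic 2-complex is equivalent to surjectivity of $C_* : \CW_G \to \Alg_G$, and then use the bijectivity of $\widetilde{C}_*$ from Theorem~\ref{thm:main0} to identify this with the equality $\CW_G = \D_G$. Your added remark reconciling the unpolarised formulation of the D2 property with the polarised condition $\CW_G = \D_G$ is a point the paper leaves implicit.
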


Finally, we remark that one can get an algebraic classification of the homotopy types of finite D2 complexes $X$ with $\pi_1(X) = G$ by quotienting $\Alg(G,2)$ by the action of $\Aut(G)$. 
Finding the number of polarised homotopy types that correspond to a given homotopy type $X$ is equivalent to finding which group automorphisms $\Aut(G)$ are induced by self-homotopy equivalences $\mathcal{E}(X)$ (see, for example, \cite{Ol65}).

\section{Projective chain complexes over integral group rings}
\label{section:preliminaries}

The aim of this section will be to recall basic preliminaries on projective chain complexes and the Swan finiteness obstruction. From now on, we will assume that $G$ is a finite group.

For $\Z G$ modules $A$ and $B$, we define $\Proj_{\Z G}^{n}(A,B)$ to be the set of chain homotopy classes of exact sequences of $\Z G$ modules
\[ E = (0 \to B \xrightarrow[]{\iota} P_{n-1} \xrightarrow[]{\partial_{n-1}} P_{n-2} \to \cdots \to P_1 \xrightarrow[]{\partial_1} P_0 \xrightarrow[]{\varepsilon} A \to 0)\]
where the $P_i$ are projective. By a chain homotopy, we formally mean a chain map which restricts to a chain homotopy equivalence on the middle terms $(P_*,\partial_*)$. For brevity, we will often omit the maps $\iota$ and $\varepsilon$ in our description of $E$.

Define the \textit{Euler class} to be
$e(E) = \sum_{i=0}^{n-1} (-1)^i [P_i] \in C(\Z G)$, which only depends on the chain homotopy type of $E$. 
For a class $\chi \in C(\Z G)$, we define $\Proj_{\Z G}^{n}(A,B;\chi)$ to be the subset of $\Proj_{\Z G}^{n}(A,B)$ consisting of those extensions with $e(E) = \chi$.
Let $\Omega_n^\chi(A)$ denote the set of $\Z G$ modules $B$ for which $\Proj_{\Z G}^{n}(A,B;\chi)$ is non-empty. 
If $n \ge 2$ then, for any $B_0 \in \Omega_n^{\chi}(A)$, we have $\Omega_n^{\chi}(A) = \{ B : B \oplus \Z G^i \cong B_0 \oplus \Z G^j, i,j \ge 0\}$, i.e. $\Omega_n^{\chi}(A)$ is a stable module \cite{Sw60b}.
We also define

\[ \Proj_{\Z G}^{n}(A,\Omega_n^\chi(A);\chi) = \bigsqcup_{B \in \Omega_n^\chi(A)} \Proj_{\Z G}^{n}(A,B;\chi)\]
which has the structure of a graded graph with grading $\chi(E) = \sum_{i=0}^{n-1} (-1)^i \rank(P_i)$ and with edges between each $E$ to the stabilised complex 
\[ E \oplus \Z G = (P_{n-1} \oplus \Z G \xrightarrow[]{(\partial_{n-1},0)} P_{n-2} \to \cdots \to P_1 \xrightarrow[]{\partial_1} P_0).\]
We can also define $\Omega_{-n}^{\chi}(B)$ to be the set of $\Z G$ modules $A$ for which $\Proj_{\Z G}^{n}(A,B;\chi)$ is non-empty and we can similarly define a graded graph $\Proj_{\Z G}^{n}(\Omega_{-n}^\chi(B),B;\chi)$.

Recall that, for a $\Z G$ module $A$, its dual is defined as $A^* = \Hom_{\Z}(A,\Z)$ which is a left $\Z G$ module under the action defined by $(g \cdot \varphi) (x) = \varphi(g^{-1} \cdot x)$ for $\varphi \in A^*$ and $g \in G$. For $\chi \in C(\Z G)$, we will write $\Omega_n^\chi(A)^* = \{ B^* : B \in \Omega_n^\chi(A)\}$.
Recall also that a $\Z G$ module $A$ is a \textit{$\Z G$ lattice} if its underlying abelian group is free, i.e. of the form $\Z^n$ for some $n \ge 0$. If $A$ is a $\Z G$ lattice, then $A^*$ is a $\Z G$ lattice and there is an isomorphism $(A^*)^* \cong A$ \cite[Section 28]{Jo03a}. If $P$ is projective, then so is $P^*$ and we also have $(P^*)^* \cong P$ since projective $\Z G$ modules are $\Z G$ lattices. 

If $E = (P_*,\partial_*) \in \Proj^n_{\Z G}(A,B)$, then define
\[ E^* = (P_{0}^* \xrightarrow[]{\partial_{1}^*} P_{1}^* \to \cdots \to P_{n-2}^* \xrightarrow[]{\partial_{n-1}^*} P_{n-1}^*).\]

\begin{lemma}[\text{\cite[Proposition 28.4]{Jo03a}}] \label{lemma:dual-of-proj}
Let $A$ and $B$ be $\Z G$ lattices. Then we have $E^* \in \Proj^n_{\Z G}(B^*,A^*)$ and $(E^*)^* \simeq  E$ are chain homotopy equivalent.
\end{lemma}

By addition of elementary complexes, we can show that every projective extension $E$ with $e(E) = 0$ is chain homotopy equivalent to an extension with the $P_i$ free provided $n \ge 2$. In particular, $\Proj_{\Z G}^{n}(A,B;0)$ can be taken to be the set of chain homotopy types of exact sequences $E$ with the $P_i$ free. We also let $\Omega_n(A) = \Omega_n^0(A)$.

It was shown by Swan \cite[Theorem 4.1]{Sw60b} that a group $G$ has $n$-periodic cohomology if and only if there exists an exact sequence of the form
\[ 0 \to \Z \to P_{n-1} \to P_{n-2} \to \cdots \to P_1 \to P_0 \to \Z \to 0\]
where the $P_i$ are projective, i.e. if $\Proj^n_{\Z G}(\Z,\Z)$ is non-empty. By taking the map $P_0 \to P_{n-1}$ which factors through $\Z$, this can be turned into in $n$-periodic projective resolution. We say that $G$ has \textit{free period $n$} if such a resolution exists with the $P_i$ free or, equivalently, if $\Proj^n_{\Z G}(\Z,\Z;0)$ is non-empty. 

Recall that the \textit{Swan map} $S: (\Z / |G|)^\times \to C(\Z G)$
sends $r \mapsto [(r,\Sigma)]$, where $\Sigma = \sum_{g \in G} g$ is the group norm and $(r,\Sigma) \subseteq \Z G$ has finite index coprime to $|G|$ and so is projective by \cite{Sw60a}. We define the \textit{Swan subgroup} to be $T_G = \IM(S)$. Finally, if $G$ has $n$-periodic cohomology and $E \in \Proj^n_{\Z G}(\Z,\Z)$, then we define the \textit{Swan finiteness obstruction} to be the class
\[ \sigma_n(G) = [e(E)] \in C(\Z G)/T_G.\]

It was shown by Swan that $\sigma_n(G)$ is independent of the choice of $E$ \cite[Lemma 7.3]{Sw60b}. Furthermore, we have:

\begin{thm}[\text{\cite{Sw60b}}] \label{thm:swan-finiteness-obstruction}
Let $G$ have $n$-periodic cohomology. Then the following are equivalent:
\begin{enumerate}[\normalfont(i)]
\item $G$ has free period $n$.
\item $\sigma_n(G) = 0 \in C(\Z G)/T_G$.
\item There is a finite 	CW-complex $X$ such that $X \simeq S^{n-1}$ and $G$ acts freely on $X$.
\end{enumerate}
\end{thm}

It took over 20 years until the first example of a group with $\sigma_n(G) \ne 0$ was found by R. J. Milgram \cite{Mi85}. It was later shown by J. F. Davis \cite{Da82} that the group $Q(16;3,1)$ with $4$-periodic cohomology has free period $8$, which is the example of minimal order. Conversely, we also have:

\begin{prop}[\text{\cite[Lemma 7.4]{Sw83}}] \label{prop:P_G-arises}
Let $G$ have $n$-periodic cohomology and let $P_G$ be a projective $\Z G$ module for which $\sigma_n(G) = [P_G] \in C(\Z G)/T_G$. Then there exists $E \in \Proj^n_{\Z G}(\Z,\Z)$ for which $e(E) = [P_G] \in C(\Z G)$.	
\end{prop}

The formulation (iii) has the following consequence for finite Poincar\'{e} $3$-complexes $X$ since, if $\pi_1(X)$ is finite, then $\wt X \simeq S^3$.

\begin{cor} \label{cor:pi_1-of-poinc}
A finite group $G$ is the fundamental group of a finite Poincar\'{e} $3$-complex if and only if $G$ has free period $4$.
\end{cor}

\section{Classification of algebraic 2-complexes} 
\label{section:D2-overview}

This section will largely be dedicated to the proof of the following theorem from the introduction.

\begingroup
\begin{thm} \label{thm:main1}
Let $G$ have $4$-periodic cohomology. Then there is an isomorphism of graded trees
\[ \Psi: \Alg(G,2) \to [P_G]\]
for any projective $\Z G$ module $P_G$ for which $\sigma_4(G) = [P_G] \in C(\Z G)/T_G$.
\end{thm}
\endgroup

A similar statement appears in \cite[Theorem 57.4]{Jo03a} though, due to a gap in the proof of Theorem 56.9, the argument given only applies in the case of minimal algebraic 2-complexes. Furthermore, the argument that $\{E \in \Alg(G,2) : \chi(E) = r\}$ and $\{P \in [P_G] : \rank(P)=r\}$ are in one-to-one correspondence for all $r \ge 1$ assumes that the former set is non-empty for each $r$, and this was only subsequently shown when $G$ has free period 4 \cite[p234]{Jo03a}.
 
Let $\mu_2(G)$ be the minimum value of $\chi(E)$ over all $E \in \Alg(G,2)$. Since $G$ is finite, we have $\mu_2(G) \ge 1$ \cite[Corollary 1.3]{Sw65}. We begin by showing the following:

\begin{prop}\label{thm:mu2=1}
If $G$ has $4$-periodic cohomology, then $\mu_2(G) = 1$.
\end{prop}

Recall that the \textit{augmentation ideal} is the module $I = \ker(\varepsilon: \Z G \to \Z)$ where $\varepsilon: \Z G \to \Z$ sends $g \mapsto 1$ for all $g \in G$. We will need the following three lemmas.

\begin{lemma} 
\label{lemma:1st-syzygy}
Let $G$ be a finite group and let $\alpha: \Z \to \Z G^n$ be injective. If $\coker(\alpha)$ is a $\Z G$ lattice, then $\coker(\alpha) \cong I^* \oplus \Z G^{n-1}$ where $I$ is the augmentation ideal.
\end{lemma}

\begin{proof}
The case $n=1$ follows from the fact that $\alpha = r\Sigma$ for $r \ne 0$ since $\Z G/\Sigma \cong I^*$, and the case $n \ge 2$ follows from \cite[Proposition 29.2]{Jo03a}.
\end{proof}

\begin{lemma} \label{lemma:replacement}
Let $P$ be a projective $\Z G$-module, let $r = \rank(P)$ and let $I$ be the augmentation ideal. Then there exists a $\Z G$ lattice $J$ for which 
\[ I \oplus P = J \oplus \Z G^r.\]
\end{lemma}

\begin{proof}
Since $G$ is a finite, \cite[Theorem A]{Sw60a} implies that $P$ is of the form $P = P_0 \oplus \Z G^{r-1}$ for some rank one projective $\Z G$ module $P_0$ and so it suffices to prove the case $r=1$. 
Let $\varphi : P \to \Z$ be the surjection obtained by taking the composition 
\[ 
P \to P \otimes \Q \cong \Q G \to \Q
\]
whose image is a non-trivial finitely-generated subgroup of $\Q$, and so isomorphic to $\Z$. If $J = \ker(\varphi)$ then, by applying Schanuel's lemma to the exact sequences
\[ 0 \to I \to \Z G \to \Z \to 0, \qquad 0 \to J \to P \to \Z \to 0,\]
we get that $I \oplus P \cong J \oplus \Z G$.
\end{proof}

\begin{lemma}[\text{\cite[p514]{Wa79b}}] \label{lemma:ext}
Let $J$ be a $\Z G$ lattice. Then $\Ext^1_{\Z G}(J, \Z G)=0$.
\end{lemma}

\begin{proof}[Proof of \cref{thm:mu2=1}]
Since $G$ has $4$-periodic cohomology, the discussion in Section \ref{section:preliminaries} implies that there exists an exact sequence of $\Z G$-modules
\[ 0 \to \Z \xrightarrow{\alpha} F_3 \to P_2 \to F_1 \to F_0 \to \Z \to 0 \]
where $P_2$ is projective and, by addition of elementary complexes, we can assume the $F_i$ are free.
By Lemma \ref{lemma:1st-syzygy}, $\coker(\alpha) \cong I^* \oplus \Z G^r$ where $r=\rank(F_3)-1$. This gives an exact sequence:
\[ 0 \to I^* \oplus \Z G^r \to P_2 \to F_1 \to F_0 \to \Z \to 0.\]
Now let $\bar{P}_2$ be a projective for which $F_2=P_2 \oplus \bar{P}_2$ is free. By forming the direct sum with length two exact sequence, we get
\[ 0 \to I^* \oplus P \to F_2 \to F_1 \to F_0 \to \Z \to 0\]
for $P=\Z G^r \oplus \bar{P}_2$ projective. By dualising the result in Lemma \ref{lemma:replacement}, we can write $I^* \oplus P = J \oplus \Z G^s$ where $s = \rank(P)$.

Let $i$ denote the injection $i: J \oplus \Z G^s \cong I^* \oplus P \to F_2$, and consider the exact sequences:
\[ 0 \to J \to F_2/i(\Z G^s) \to F_1 \to F_0 \to \Z \to 0, \quad 0 \to \Z G^s \to F_2 \to F_2/i(\Z G^s) \to 0.\]
The first exact sequence implies that $F_2/i(\Z G^s)$ is a $\Z G$ lattice. By Lemma \ref{lemma:ext}, this implies that 
$\Ext^1_{\Z G}(F_2/i(\Z G^s),\Z G^s) = 0$ and so $F_2 \cong F_2/i(\Z G^s) \oplus \Z G^s$ by the second exact sequence. Hence we get an exact sequence 
\[ 0 \to J \to F_2 \to F_1 \oplus \Z G^s \to F_0 \to \Z \to 0\]
which defines an algebraic $2$-complex $E$. Since $J \cong I^* \cong \Z G^{|G|-1}$ as abelian groups, we have that $\chi(E)=1$ which completes the proof.
\end{proof}

Recall that a graded tree is a \textit{fork} if it has a single vertex at each non-minimal level (i.e. grade). The following was shown by W. H. Browning. These results were never published, though an alternate proof can be found in \cite[Corollary 2.6]{Ha19}.

\begin{thm}[\text{\cite[Theorem 5.4]{Br78}}]
Let $G$ be a finite group. Then $\Alg(G,2)$ is a fork.
\end{thm}

On the other hand, if $G$ is a finite group, then \cite[Theorem A]{Sw60a} implies that every projective $\Z G$ module $P$ is of the form $P = P_0 \oplus \Z G^r$ for some rank one projective $\Z G$ module $P_0$. This implies that $[P]$ is also a fork. 

Hence, in order to prove \cref{thm:main1}, it suffices to prove that there is a bijection $\Psi$ between $\Alg(G,2)$ and $[P_G]$ at the minimal levels, i.e. that there is a one-to-one correspondence between $\{E \in \Alg(G,2) : \chi(E)=1\}$ and $\{ P \in [P_G] : \rank(P)=1\}$ (see \cref{figure:fork}). We now need the following two results.

\begin{figure}[h] \vspace{-4mm} 
\begin{center}
\begin{tabular}{c c c}
\begin{tabular}{l}
\begin{tikzpicture}
\draw[fill=black] (0,0) circle (2pt);
\draw[fill=black] (1,0) circle (2pt);
\draw[fill=black] (2,0) circle (2pt);
\draw[fill=black] (3,0) circle (2pt);
\draw[fill=black] (4,0) circle (2pt);
\draw[fill=black] (2,1) circle (2pt);
\draw[fill=black] (2,2) circle (2pt);
\draw[fill=black] (2,3) circle (2pt);
\node at (2,3.6) {$\vdots$};
\draw[thick] (0,0) -- (2,1) (1,0) -- (2,1) (2,0) -- (2,1) (4,0) -- (2,1) -- (2,2) -- (2,3);
\draw[dashed] (3,0) -- (2,1);
\end{tikzpicture} 
\end{tabular}
&
\begin{tabular}{l}
\hspace{-3mm}$\xrightarrow[]{\quad \Psi \quad}$\hspace{-3mm}
\end{tabular}
&
\begin{tabular}{l}
\begin{tikzpicture}
\draw[fill=black] (0,0) circle (2pt);
\draw[fill=black] (1,0) circle (2pt);
\draw[fill=black] (2,0) circle (2pt);
\draw[fill=black] (3,0) circle (2pt);
\draw[fill=black] (4,0) circle (2pt);
\draw[fill=black] (2,1) circle (2pt);
\draw[fill=black] (2,2) circle (2pt);
\draw[fill=black] (2,3) circle (2pt);
\node at (2,3.6) {$\vdots$};
\draw[thick] (0,0) -- (2,1) (1,0) -- (2,1) (2,0) -- (2,1) (3,0) -- (2,1) (2,1) -- (2,2) -- (2,3);
\draw[dashed] (4,0) -- (2,1);
\end{tikzpicture}
\end{tabular}
\end{tabular}
\end{center}
\caption{Tree structures for $\Alg(G,2)$ and $[P_G]$} \label{figure:fork}
\vspace{-2mm}
\end{figure}

\begin{prop} \label{prop:alg-to-stab}
There is an isomorphism of graded trees
\[\Alg(G,2) \cong \Proj_{\Z G}^{3}(\Z, \Omega_3(\Z);0). \]
\end{prop}

\begin{prop} \label{prop:stab1-to-modules}
Let $\chi = [P] \in C(\Z G)$. Then there is an map of graded trees
	\[ \Phi: \Proj_{\Z G}^{1}(\Omega_3(\Z), \Z;\chi) \to [P] \]
	given by $(0 \to \Z \to P_0	\to J \to 0) \mapsto P_0$, which is a bijection at the minimal level.
\end{prop}

The first is immediate from the discussion in \cref{section:preliminaries}, and the second is a consequence of the following which is a slight extension of \cite[Corollary 56.5]{Jo03a}.

\begin{lemma} \label{lemma:aug-seq}
For $i=1,2$, let $P_i$ be projective $\Z G$ modules of rank one and let $\mathcal{E}_i = (0 \to J \to P_i	\to \Z \to 0)$ be exact sequences of $\Z G$-modules. Then there is a chain homotopy equivalence $\mathcal{E}_1 \simeq \mathcal{E}_2$ if and only if $P_1 \cong P_2$.
\end{lemma}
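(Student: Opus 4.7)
The plan is to prove both directions by relating them to orbits in $\Ext^1_{\Z G}(\Z, J)$ under the action of $\Aut_{\Z G}(J) \times \Aut_{\Z G}(\Z)$ on extension classes by pushout and pullback on the ends.

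For the direction $(\Leftarrow)$, suppose $\phi : P_1 \to P_2$ is an isomorphism. The classical theory of Yoneda extensions tells us that two extensions of $\Z$ by $J$ have isomorphic middle terms if and only if their classes in $\Ext^1_{\Z G}(\Z, J)$ lie in the same orbit of $\Aut_{\Z G}(J) \times \Aut_{\Z G}(\Z)$. Hence there exist $\alpha \in \Aut_{\Z G}(J)$ and $\gamma \in \Aut_{\Z G}(\Z) = \{\pm 1\}$ with $\alpha_*[\mathcal{E}_1] = \gamma^*[\mathcal{E}_2]$. The resulting Yoneda equivalence produces a chain map $\mathcal{E}_1 \to \mathcal{E}_2$ whose end components are the automorphisms $\alpha$ and $\gamma$ and whose middle component $P_1 \to P_2$ is an isomorphism by the five lemma. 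A chain isomorphism is in particular a chain homotopy equivalence.

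For the direction $(\Rightarrow)$, let $f : \mathcal{E}_1 \to \mathcal{E}_2$ be a chain map, with chain homotopy inverse $g : \mathcal{E}_2 \to \mathcal{E}_1$, realising the chain homotopy equivalence. Extract the components $f_J, f_P, f_\Z$ and $g_J, g_P, g_\Z$. The plan is to show that $f_J$ and $f_\Z$ are automorphisms; the five lemma then yields $f_P$ as an isomorphism $P_1 \cong P_2$. Since $\End_{\Z G}(\Z) = \Z$, the maps $f_\Z$ and $g_\Z$ are multiplication by integers $n$ and $m$; the chain homotopy relation $gf \simeq \id$ evaluated at $\Z$ gives $nm - 1 \in \pi_1(P_1^G)$, and the symmetric relation $fg \simeq \id$ gives $nm - 1 \in \pi_2(P_2^G)$. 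Combining these with the chain map compatibilities $\pi_2 f_P = n \pi_1$ and $\pi_1 g_P = m \pi_2$, the aim is to deduce $nm = 1$, so $n, m \in \{\pm 1\}$. An analogous argument using the relations $g_J f_J - \id = h_1 \iota_1$ and $f_J g_J - \id = k_1 \iota_2$ together with the compatibility via $\iota_i$ shows that $f_J$ is an automorphism of $J$.

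The main obstacle is precisely this upgrade from chain-homotopy inverses to genuine automorphisms on the ends. The chain homotopy relations only guarantee invertibility in the stable module category, i.e.\ modulo maps factoring through $P_1$ or $P_2$, and promoting this to honest isomorphisms in $\Z G$-mod uses both the exactness of $\mathcal{E}_1, \mathcal{E}_2$ and the simple structure of $\End_{\Z G}(\Z) = \Z$. A cleaner alternative route, which I expect is closer to the actual book proof, is to first modify $f$ (and $g$) by explicit chain homotopies chosen so that $f_J = \id_J$ and $f_\Z = \id_\Z$, after which the statement reduces to an ordinary Yoneda equivalence of extensions, whose middle component is then an isomorphism by the five lemma.
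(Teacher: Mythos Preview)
The paper does not prove this lemma; it merely cites it as a generalisation of \cite[Corollary~56.5]{Jo03 book}. Your outline, however, has real gaps in both directions. For $(\Leftarrow)$, the claim that two extensions of $\Z$ by $J$ have isomorphic middle terms if and only if their classes lie in the same $\Aut_{\Z G}(J)\times\Aut_{\Z G}(\Z)$-orbit in $\Ext^1$ is not a textbook fact. The implication ``same orbit $\Rightarrow$ isomorphic middles'' is immediate, but the converse is essentially the content of the lemma itself, so invoking it is circular. An abstract isomorphism $\phi:P_1\to P_2$ need not carry $i_1(J)$ onto $i_2(J)$, and you give no argument that the extension structure transported to $P_2$ via $\phi$ lies in the same orbit as the original $\mathcal{E}_2$.

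For $(\Rightarrow)$, you correctly identify the obstacle but do not resolve it. The homotopy relation at $\Z$ gives $nm-1\in p_1(P_1^G)$; for instance when $P_1=\Z G$ one has $P_1^G=\Z\Sigma$ and $p_1(\Sigma)=|G|$, so you only obtain $nm\equiv 1\pmod{|G|}$, not $nm=1$, and $f_\Z$ need not be $\pm 1$. The same issue blocks your suggested ``cleaner alternative'': modifying $f$ by a chain homotopy changes $f_\Z$ only by an element of $p_1(P_1^G)$, so one cannot in general force $f_\Z=1$ this way. A correct proof (as in Johnson's treatment) requires an additional input specific to short extensions of $\Z$, not just the formal homotopy relations.
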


The following will be useful in comparing Propositions \ref{prop:alg-to-stab} and \ref{prop:stab1-to-modules}.

\begin{lemma} \label{lemma:duality-chain}
Let $G$ have $4$-periodic cohomology and let $P_G$ be a projective $\Z G$ module for which $\sigma_4(G) = [P_G] \in C(\Z G)/T_G$. If $\chi = [P_G^*] \in C(\Z G)$, then 
\[\Omega_3(\Z) = \Omega_1^\chi(\Z)^*.\]
\end{lemma}

\begin{proof}
By \cref{prop:P_G-arises}, there exists $E \in \Proj^4_{\Z G}(\Z,\Z)$ with $e(E) = [P_G]$. By addition of elementary complexes, we can assume that
\[ E = (P \xrightarrow{\partial_3} F_2 \xrightarrow{\partial_2} F_1 \xrightarrow{\partial_1} F_0)\]
where the $F_i$ are free, so that $P \in [P_G]$.  

Let $J = \ker(\partial_2) = \IM(\partial_3)$. Then $J \in \Omega_3(\Z)$ and there are exact sequences 
\[ \mathcal{E} =( 0 \to \Z \xrightarrow[]{\alpha} P \xrightarrow[]{\beta} J \to 0), \quad \mathcal{E}^* = (0 \to J^* \xrightarrow[]{\beta^*} P^* \xrightarrow[]{\alpha^*} \Z \to 0), \]
where $\mathcal{E}^*$ is exact by \cref{lemma:dual-of-proj} since $\Z$ and $J$ are $\Z G$ lattices.  Hence $J^* \in \Omega_1^\chi(\Z)$. Since $(J^*)^* \cong J$, this implies that $J \in \Omega_1^\chi(\Z)^*$. Hence $\Omega_3(\Z)=\Omega_1^\chi(\Z)^*$ since two stable modules are equal if they intersect non-trivially.
\end{proof}

Recall that, if $J$ is a $\Z G$ module, then an automorphism $\varphi: J \to J$ induces a map $\varphi_*: H^{n}(G;J) \to H^{n}(G;J)$. If $J \in \Omega_n(\Z)$, then $H^{n}(G;J) \cong \Z / |G|$ \cite[p132]{Jo03a}. By fixing this identification, the construction $\varphi \mapsto \varphi_*$ induces a map
\[ \nu^J : \Aut_{\Z G}(J) \to (\Z /|G|)^\times.\]
Let $S: (\Z /|G|)^\times \to C(\Z G)$ denote the Swan map, as defined in Section \ref{section:preliminaries}. Then:

\begin{lemma}[\text{\cite[Theorems 54.6, 56.10]{Jo03a}}] \label{lemma:stab-classify}
Let $\chi \in C(\Z G)$ and $J \in \Omega_n^\chi(\Z)$. Then  $\IM(\nu^J) \subseteq \ker(S)$ and there is a bijection
	\[ \Proj_{\Z G}^{n}(\Z, J;\chi) \cong \ker(S)/\IM(\nu^J).\]
\end{lemma}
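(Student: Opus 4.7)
My plan is to parametrise elements of $\Proj_{\Z G}^{n,c}(\Z, J)$ by Yoneda classes in $H^3(G;J)$ and then translate the conditions ``represents a projective extension'' and ``has Wall obstruction $c$'' into arithmetic on $(\Z/|G|)^\times$ modulo the Swan subgroup.

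First I would justify the identification $H^3(G;J) \cong \Z/|G|$. Since $J \in \Omega_3^c(\Z)$ and $G$ is $4$-periodic, $\Omega(J)$ is stably isomorphic to $\Z$, so after possibly adding a free summand one obtains a short exact sequence of the form $0 \to \Z \oplus F \to Q \to J \to 0$ with $Q$ projective and $F$ free, whose cohomology long exact sequence reduces to a connecting isomorphism $H^3(G;J) \cong H^4(G;\Z) \cong \Z/|G|$ (using $H^i(G;F) = 0$ for $i \ge 1$). This is precisely the identification used to define $\nu^J$. By Theorem \ref{thm:ext-represented-by-projectives}, a class $g \in H^3(G;J)$ is represented by a projective extension of $\Z$ by $J$ if and only if its image in $\Z/|G|$ is a unit. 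A Yoneda-style lifting argument along the projective modules then shows that the chain homotopy type of such an extension is determined by its Yoneda class up to the action of $\Aut(J)$, giving a bijection
\[ \Proj_{\Z G}^{n,c}(\Z, J) \;\simeq\; \mathcal{G}_c / \Aut(J), \qquad \mathcal{G}_c := \{ g \in (\Z/|G|)^\times : \chi(g) = c \}, \]
where $\varphi \in \Aut(J)$ acts as multiplication by $\nu^J(\varphi)$.

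For $\IM(\nu^J) \subseteq \Ker(S)$, fix any $E_0 \in \Proj_{\Z G}^{n,c}(\Z, J)$ with Yoneda class $g_0 \in \mathcal{G}_c$. Given $\varphi \in \Aut(J)$, the extension $\varphi_* E_0$ obtained by pre-composing the monomorphism $J \hookrightarrow P_{n-1}$ with $\varphi$ has Yoneda class $\nu^J(\varphi) \cdot g_0$ and uses the same projective modules as $E_0$, so $\chi(\varphi_* E_0) = \chi(E_0) = c$. By Theorem \ref{thm:swan-finiteness}, two classes $g, g' \in (\Z/|G|)^\times$ give the same Wall obstruction if and only if $g'/g \in \Ker(S)$; therefore $\mathcal{G}_c = g_0 \cdot \Ker(S)$ and $\nu^J(\varphi) \in \Ker(S)$ as claimed.

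The bijection then follows at once: since $\mathcal{G}_c = g_0 \cdot \Ker(S)$ and $\Aut(J)$ acts by multiplication by $\IM(\nu^J) \subseteq \Ker(S)$, translating by $g_0^{-1}$ yields $\Proj_{\Z G}^{n,c}(\Z, J) \simeq \Ker(S)/\IM(\nu^J)$. The step that will require the most care is establishing that chain homotopy classes of length-$n$ projective extensions (whose intermediate projective modules need not be isomorphic a priori) are genuinely classified by $\mathcal{G}_c / \Aut(J)$. This reduces to verifying that any chain map of projective extensions covering the identity on the ends is a chain homotopy equivalence, and that two equivalences differ on $J$ by an automorphism; both are standard consequences of projectivity of the modules in the middle combined with Theorem \ref{thm:ext-represented-by-projectives}, after which the Swan-subgroup manipulations are formal.
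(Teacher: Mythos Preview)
The paper does not prove this lemma; it simply cites Johnson's book \cite{Jo03 book} (Theorems 54.6 and 56.10). Your outline is the standard one behind those results, and the overall shape is correct: identify extension classes with units in $\Z/|G|$, act by $\Aut(J)$ via $\nu^J$, and cut down to a coset of $\Ker(S)$ using the Wall obstruction. Two points deserve tightening, however.

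First, there is an $n$ versus $3$ mismatch. You parametrise $\Proj_{\Z G}^{n,c}(\Z,J)$ by classes in $H^3(G;J)$, but length-$n$ projective extensions are classified by $\Ext^n_{\Z G}(\Z,J)=H^n(G;J)$. Either carry out the argument in $H^n(G;J)$ (noting that $H^n(G;J)\cong\Z/|G|$ by periodicity, for the relevant $n$) or prove the independence of $n$ separately by splicing/unsplicing with a fixed short projective extension; as stated, the paper explicitly emphasises that this independence is part of the content of the lemma.

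Second, you invoke Theorem~\ref{thm:swan-finiteness} to conclude that two generators of $(\Z/|G|)^\times$ have equal Wall obstruction iff their ratio lies in $\Ker(S)$, but that theorem is stated for generators of $H^k(G;\Z)$, not for classes in $\Ext^n(\Z,J)$. The fix is to splice with the short exact sequence $0\to\Z\oplus F\to Q\to J\to 0$ you already built: Yoneda product with this fixed class gives an isomorphism $H^n(G;J)\to H^{n+1}(G;\Z)$ which shifts the Wall obstruction by the constant $[Q]$, after which Theorem~\ref{thm:swan-finiteness} applies directly and yields $\mathcal{G}_c=g_0\cdot\Ker(S)$ as you claim. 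With these two adjustments the argument goes through.
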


In particular, $\Proj_{\Z G}^{n}(\Z, J;\chi)$ only depends on $J$ and not on $n$ or $\chi$. 

\begin{proof}[Proof of Theorem \ref{thm:main1}]
First note, since the map $P \mapsto P^*$ is an involution on the class of projective $\Z G$ modules, it must induce an isomorphism of graded trees $[P_G] \cong [P_G^*]$. Hence, by Propositions \ref{prop:alg-to-stab} and \ref{prop:stab1-to-modules}, it suffices to prove that the graded trees
\[ \Proj_{\Z G}^{3}(\Z, \Omega_3(\Z);0), \quad \Proj_{\Z G}^{1}(\Z, \Omega_1^{\chi}(\Z);\chi) \]
contain the same number of extensions at the minimal level, where $\chi = [P_G^*]$.

To see this, let $J \in \Omega_3(\Z)$ be minimal and note that $\Aut_{\Z G}(J) \cong \Aut_{\Z G}(J^*)$ and so there is a bijection $\IM(\nu^J) \cong \IM(\nu^{J^*})$. In particular, we have bijections
	\[ \Proj_{\Z G}^{3}(\Z, J;0) \cong \ker(S)/\IM(\nu^J) \simeq \ker(S)/\IM(\nu^{J^*}) \cong \Proj_{\Z G}^{1}(\Z, J^*;\chi).\]
By \cref{lemma:duality-chain}, the map $J \mapsto J^*$ induces a bijection $\Omega_3(\Z) \cong \Omega_1^{\chi}(\Z)$. 
We can now extend the bijection $\Proj_{\Z G}^{3}(\Z, J;0) \cong \Proj_{\Z G}^{1}(\Z, J^*;\chi)$ over all $J \in \Omega_3(\Z)$ at the minimal level, and this completes the proof.
\end{proof}

We conclude this section by remarking that, whilst it will not be needed in the proof of \cref{thm:D2}, we can obtain the following explicit form for the map $\Psi$.

\begin{prop} \label{prop:Psi-explicit}
Let $G$ have $4$-periodic cohomology, let $\bar{E} \in \Proj^4_{\Z G}(\Z,\Z)$ and let $P_G$ be a projective $\Z G$ module for which $e(\bar{E}) = [P_G]$.
Then there is an isomorphism of graded trees
\[ \Psi: \Alg(G,2) \to [P_G]\]
sending $E = (F_*, \partial_*) \mapsto P$, where $P$ is the unique projective $\Z G$ module for which
\[ (0 \to \Z \xrightarrow[]{\alpha} P \xrightarrow[]{\beta} F_2 \xrightarrow[]{\partial_2} F_1 \xrightarrow[]{\partial_1} F_0 \xrightarrow[]{\partial_0} \Z \to 0) \simeq \bar{E} \]
is a chain homotopy equivalence for some $\alpha$ and $\beta$.
\end{prop}

\begin{proof}
It follows from \cite[Lemma 1.1]{Wa79b} that, for each $E =(F_*,\partial_*)$ and $J = \ker(\partial_2) \subseteq F_2$, there is a bijection
\[ \varphi_{E} : \Proj^1_{\Z G}(J,\Z) \to \Proj^4_{\Z G}(\Z,\Z)\]
\vspace{-4mm}
\[ (0 \to \Z \xrightarrow[]{\alpha} P \xrightarrow[]{\beta} J \to 0) \mapsto (0 \to \Z \xrightarrow[]{\alpha} P \xrightarrow[]{\beta} F_2 \xrightarrow[]{\partial_2} F_1 \xrightarrow[]{\partial_1} F_0 \xrightarrow[]{\partial_0} \Z \to 0). \]

Hence, for $\chi = [P_G]$, we have well-defined maps of graded trees
\[ \Psi : \Alg(G,2) \to \Proj^1_{\Z G}(\Omega_3(\Z),\Z;\chi) \to [P_G] \]
where the first map is given by sending $E \mapsto \varphi_{E}^{-1}(\bar{E})$, and the second map sends $(0 \to \Z \xrightarrow[]{\alpha} P \xrightarrow[]{\beta} J \to 0) \mapsto P$ which is contained in $[P_G]$ since $e(\bar{E}) = [P_G]$ only depends on the chain homotopy type of $\bar{E}$.
It follows by tracing through the proof of \cref{thm:main1} that this is a bijection and so is an isomorphism of graded trees.
\end{proof}

In a subsequent article \cite{Ni20}, we will explore the correspondence given in \cref{prop:Psi-explicit} in more detail and in a more general form.

\section{Cancellation for projective modules over integral group rings} \label{section:projective-modules}

Recall from algebraic K-theory that $K_0(\Z G)$ is defined as the Grothendieck group of the monoid of isomorphism classes of projective $\Z G$-modules $P(\Z G)$, i.e. the abelian group generated by $[P]$ for $P \in P(\Z G)$ with relations $[P_1 \oplus P_2] = [P_1] \oplus [P_2]$. Recall also that $K_1(\Z G) = \GL(\Z G)^{\text{ab}}$ where $\GL(\Z G) = \bigcup_n \GL_n(\Z G)$. 

For a projective $\Z G$ module $P$, let $\Aut(P)$ denote the group of $\Z G$ module automorphisms of $P$ and define $\Aut(P) \to K_1(\Z G)$ by choosing a projective $Q$ such that $P \oplus Q \cong \Z G^r$ is free and then forming the composition
\[ \Aut(P) \subseteq \Aut(P \oplus Q) \cong \GL_r(\Z H) \subseteq \GL(\Z H) \to K_1(\Z H).\]
This is well-defined by \cite[Lemma 3.2]{Mi71}.

The aim of this section will be to prove the following. 

\begingroup
\begin{thm} \label{thm:main2}
Let $G$ be a finite group with quotient $H=G/N$, let $\bar{P} \in \LF_1(\Z G)$ and let $P = \bar{P} \otimes_{\Z N} \Z \in \LF_1(\Z H)$. Suppose $m_{\H}(G)=m_{\H}(H)$ and that the map 
\[ \Aut (P) \to K_1(\Z H)\]
is surjective. Then $[\bar{P}]$ has cancellation if and only if $[P]$ has cancellation.
\end{thm}
\endgroup

Our proof will follow a similar outline to \cite[Theorem A]{Ni21}, which established this result in the special case where $\bar{P}=\Z G$ and $P=\Z H$.

\subsection{Preliminaries on locally free modules}

From now on, we will let $A$ be a finite-dimensional semisimple $\Q$-algebra and we will let $\Lambda$ be a $\Z$-order in $A$, i.e. a finitely-generated subring of $A$ such that $\Q \cdot \Lambda = A$. For example, we can take $\Lambda = \Z G$ and $A = \Q G$.
For a prime $p$, let $\Lambda_p = \Lambda \otimes \Z_p$ where $\Z_p$ is the $p$-adic integers. We say that a $\Lambda$ module $M$ is \textit{locally free of rank $n$} if $M \otimes \Z_p$ is a free $\Lambda_p$ module of rank $n$ for all primes $p$. 

Let $C(\Lambda)$ denote the locally free class group, i.e. the equivalence classes of locally free $\Lambda$ modules where $P_1 \sim P_2$ if $P_1 \oplus \Lambda^i \cong P_2 \oplus \Lambda^j$ for some $i,j \ge 0$, and we let $\LF_1(\Lambda)$ denote the set of isomorphism classes of locally free $\Lambda$ modules of rank one. We say that $\l$ has \textit{locally free cancellation} if $[P]$ has cancellation for all locally free $\l$ modules $P$.
By \cite[Lemma 2.1]{Sw80}, every locally free $\Lambda$ module is projective. The converse holds in the case where $\Lambda = \Z G$ by \cite[Theorems 2.21, 4.2]{Sw70} and so $C(\Z G)$ and $\LF_1(\Z G)$ coincide with our previous definitions. 

Let $m_{\H}(\l)$ denote the number of copies of $\H$ in the Wedderburn decomposition of $\l_{\R} = \l \otimes \R$, and we say that $\l$ satisfies the \textit{Eichler condition} if $m_{\H}(\l)=0$. The following two results will be essential to the proof of \cref{thm:main2}.

\begin{thm}[\text{\cite[Theorem 4.1]{Ja68}}] \label{thm:jacobinski}
Suppose $\Lambda$ satisfies the Eichler condition. Then $\Lambda$ has locally free cancellation.
\end{thm}

\begin{thm}[\text{\cite[Corollary 10.5]{Sw80}}] \label{thm:swan-K_1}
Suppose $\Lambda$ satisfies the Eichler condition, $I \subseteq \l$ is a two-sided ideal of finite index and $f:\Lambda \to \Lambda/I$ is the induced map. Then $f(\Lambda^\times) \unlhd (\Lambda/I)^\times$ and $(\Lambda/I)^\times \to K_1(\Lambda/I)$ induces an isomorphism
	\[ \frac{(\Lambda/I)^\times}{\Lambda^\times} \cong \frac{K_1(\Lambda/I)}{K_1(\Lambda)}. \]
\end{thm}

Recall the following which is a refinement of a result of A. Fr\"{o}hlich \cite{Fr75}. This gives the forward direction of \cref{thm:main2}.

\begin{thm}[\text{\cite[Theorem A10]{Sw83}}] \label{thm:cancellation-closed-under-quotients}
Let $H=G/N$, let $\bar{P} \in \LF_1(\Z G)$ and let $P = \bar{P} \otimes_{\Z N} \Z \in \LF_1(\Z H)$. If $[\bar{P}]$ has cancellation, then $[P]$ has cancellation.
\end{thm}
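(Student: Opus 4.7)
The plan is to lift a putative counterexample to cancellation in $[P]$ back across the quotient map $\Z G \twoheadrightarrow \Z H$, apply cancellation in $[\bar P]$, and reduce modulo $N$ to recover the original isomorphism. This will follow the outline of the stably free case ($\bar P = \Z G$, $P = \Z H$) treated in \cite[Theorem 1.1]{Ni18}, with $\bar P$ and $P$ playing the roles of $\Z G$ and $\Z H$ throughout.

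I would fix $P_1, P_2 \in [P]$ with $P_1 \oplus \Z H \cong P_2 \oplus \Z H$ and aim to conclude $P_1 \cong P_2$. First I would choose locally free lifts $\bar P_i \in LF_1(\Z G)$ with $\bar P_i \otimes_{\Z N} \Z \cong P_i$, using the standard surjectivity of the reduction map $LF_1(\Z G) \twoheadrightarrow LF_1(\Z H)$ for integral group rings of finite groups, and then arrange $\bar P_1, \bar P_2 \in [\bar P]$ by observing that their classes in $\widetilde{K}_0(\Z G)$ are determined (up to the kernel of the $K_0$-reduction map) by those of $P_1, P_2$ in $\widetilde{K}_0(\Z H)$. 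The key step would then be to promote the given stable isomorphism over $\Z H$ to a stable isomorphism $\bar P_1 \oplus \Z G^k \cong \bar P_2 \oplus \Z G^k$ over $\Z G$ for some $k \ge 0$. Once this holds, iterated cancellation in $[\bar P]$ yields $\bar P_1 \cong \bar P_2$, and tensoring with $\Z$ over $\Z N$ recovers $P_1 \cong P_2$.

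The main obstacle is precisely the promotion of the stable isomorphism, which is the content of \cite[Theorem A10]{Sw83}: a Mayer--Vietoris-style lifting mechanism for isomorphisms of locally free modules across a quotient of rings. This is the same technical input that underlies Fr{\"o}hlich's original cancellation-under-quotients result \cite{Fr75} that the present theorem refines. Its applicability in our situation requires only that the $\bar P_i$ be chosen with matching classes in $\widetilde{K}_0(\Z G)$, which holds by our choice of lifts, so no extra surjectivity hypothesis on $\Aut(P) \to K_1(\Z H)$ is needed for this forward direction (in contrast to the converse, which is the harder content of Theorem \ref{thm:main2}). With this lifting in hand, the stably free argument of \cite{Ni18} transfers verbatim, the only change being to carry $\bar P$ through each tensor and stabilisation where $\Z G$ previously appeared.
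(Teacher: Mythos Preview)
Your proposal is correct and follows essentially the same approach as the paper, which simply states that the result follows from \cite[Theorem A10]{Sw83} in exactly the same way as \cite[Theorem 1.1]{Ni18} handles the stably free case. One small redundancy: once you have arranged $\bar P_1,\bar P_2\in[\bar P]$ in your Step~3, they are already stably isomorphic over $\Z G$ by definition of the class, so the ``promotion'' you describe as the key Step~4 is automatic; the genuine content of \cite[Theorem A10]{Sw83} is what lets you choose the lifts in the correct class to begin with.
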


Finally, recall that every projective $\Z G$ module $P$ is of the form $P = P_0 \oplus \Z G^r$ where $P_0$ has rank one \cite[Theorem A]{Sw60a}. In particular, the stable class map $[\, \cdot \,] : \LF_1(\Z G) \to C(\Z G)$ is surjective and is bijective precisely when $\Z G$ has locally free cancellation. Furthermore, $\Z G$ has cancellation in the class of $[P]$ precisely when the fibre over $[P] \in C(\Z G)$ is trivial.

\subsection{Proof of \cref{thm:main2}}

Let $H=G/N$ where $N \le G$ is a normal subgroup, let $\bar{P} \in \LF_1(\Z G)$ and let $P = \bar{P} \otimes_{\Z N} \Z \in \LF_1(\Z H)$.
Since the converse direction was proven in Theorem \ref{thm:cancellation-closed-under-quotients}, it will suffice to prove that $[P]$ has cancellation subject to the following three conditions:
\begin{enumerate}[\normalfont (i)]
\item $[\bar{P}]$ has cancellation.
\item $m_{\H}(G) = m_{\H}(H)$.
\item The map $\Z H^\times \to K_1(\Z H)$ is surjective.
\end{enumerate}

We begin by considering the following pullback diagram for $\Z G$ induced by $N$:
\[
\begin{tikzcd}
  \Z G \arrow[r,"i_2"] \arrow[d,"i_1"] & \Lambda \arrow[d,"j_2"] \\
  \Z H \arrow[r,"j_1"] & (\Z/n \Z)[H]
\end{tikzcd}
\]
where $n = |N|$, $\wh N = \sum_{g \in N} g$ and $\Lambda = \Z G / \wh N$. This is the standard pullback diagram for the ring $\Z G$ and trivially intersecting ideals 
\[I= \ker( \Z G \to \Z H) = I(N) \cdot G \quad \text{and} \quad J= \widehat{N} \cdot \Z G,\] 
where $I(N)= \ker(\Z N \to \Z)$ is the augmentation ideal \cite[Example 42.3]{CR90}. 

Note that $\Lambda$ is a $\Z$-order in $\Lambda_\Q$ which is a finite-dimensional semisimple $\Q$-algebra since, by tensoring the diagram with $\Q$, we get that $\Q G \cong \Q H \times \Lambda_\Q$. By tensoring further with $\R$, we get that $\R G \cong \R H \times \Lambda_{\R}$ and so $m_{\H}(G) = m_{\H}(H) + m_{\H}(\l)$. 

By condition (ii), $\Lambda$ satisfies the Eichler condition and so $\Lambda$ has locally free cancellation by Theorem \ref{thm:jacobinski}.
 In particular, $\LF_1(\Lambda) = C(\Lambda)$ and, by condition (i), we also have that $\LF_1(\Z H) \to C(\Z H)$ has trivial fibre over $[P]$. 

Consider the following diagram induced by the maps on projective modules.
\begin{equation*}
\begin{tikzcd}
\LF_1(\Z G) \arrow[r,"\varphi_1"] \arrow[d, twoheadrightarrow] & \LF_1(\Z H) \times \LF_1(\Lambda) \arrow[d,twoheadrightarrow] \\
C(\Z G) \arrow[r,"\varphi_2"] & C(\Z H) \times C(\Lambda)
\end{tikzcd}
\end{equation*}
Let $Q$ be the image of $\bar{P}$ in $\LF_1(\Lambda)$. Then proving $[\bar{P}]$ has cancellation amounts to proving that the fibres of $\varphi_1, \varphi_2$ over $(P,Q)$ are in bijection. 
We will now compute each of these fibres in turn.

Firstly, by Theorem 8.1 of \cite{Sw60a}, we know that 
\[P \otimes (\Z/n\Z)[H] \cong (\Z/n\Z)[H] \cong Q \otimes (\Z/n\Z)[H].\] 
The pullback diagram for $\Z G$ above is a Milnor square and so, by the general construction of projectives modules using a Milnor square (see, for example, \cite[Proposition 4.1]{Sw80}), the fibre $\varphi_1^{-1}(P,Q)$ is in correspondence with the double coset 
\[ \Aut (P) \backslash (\Z/n\Z)[H]^\times / \Aut (Q)\] 
which we take to mean $\bar{j}_1(\Aut(P)) \backslash (\Z/n\Z)[H]^\times / \bar{j}_2(\Aut (Q))$ where $\bar{j}_1$ is the induced map 
$\Aut(P) \to \Aut(P \otimes (\Z/n\Z)[H]) \cong (\Z/n\Z)[H]^\times$ and similarly for $\bar{j}_2$.

Secondly, by \cite[Theorem 3.3]{Mi71}, there is an exact sequence of the form
\[
\begin{tikzcd}
  K_1(\Z H) \times K_1(\Lambda) \rar 
    & K_1 ((\Z/n\Z)[H]) \arrow[r,"\partial"]  &   
  K_0(\Z G) \arrow[r] & K_0(\Z H) \times K_0(\Lambda)
\end{tikzcd}
\]
where all maps other than $\partial$ are functorial, which is a part of the Mayer-Vietoris sequence for the Milnor square above.

Since projective $\Z G$ modules are locally free, the locally free rank induces a surjection $\rk : K_0(\Z G) \to \Z$ and, by \cite[p157]{Sw80}, we have that $C(\Z G) \cong \ker(\rk)$. It is now straightforward to check that $\ker(\varphi_2) \cong \ker(K_0(\Z G) \to K_0(\Z H) \times K_0(\Lambda))$. Hence, by exactness, the fibres of $\varphi_2$ are in one-to-one correspondence with
\[ \text{Im}(K_1 ((\Z/n\Z)[H]) \to C(\Z G)) \cong \frac{K_1((\Z/n\Z)[H])}{K_1(\Z H) \times K_1(\Lambda)} \]
where we take the denominator to mean $\bar{j_1}(K_1(\Z H)) + \bar{j_2}(K_1(\l))$ where $\bar{j}_i = K_1(j_i)$.

Now note that $\End(Q)$ satisfies the Eichler condition since 
\[\End(Q) \otimes \R \cong \End(Q \otimes \R) \cong \End(\Lambda_\R) \cong \Lambda_\R. \]
Let $J = \ker(\Lambda \twoheadrightarrow (\Z/n\Z)[H])$ and note that there is a map $\End (Q) \twoheadrightarrow \Lambda/J$ induced by localisation \cite[p146]{Sw83}. 
This implies that $I=\ker(\End (Q) \twoheadrightarrow \Lambda/J)$ has finite index in $\End (Q)$ and is such that 
$ \End(Q)/I \cong \Lambda/J \cong (\Z/n\Z)[H].$ 

Hence we can apply \cref{thm:swan-K_1} to get that
\[ \frac{(\Z/n\Z)[H]^\times}{\Aut(Q)} \cong \frac{K_1((\Z/n\Z)[H])}{K_1(\End(Q))}. \]
by using that $\Aut(Q) = \End(Q)^\times$. Since there is a commutative diagram
\begin{equation*}
\begin{tikzcd}
 K_1(\Lambda) \ar[d,"\cong"] \arrow[r] & K_1(\Lambda/J) \\
K_1(\End (Q)) \arrow[ur]
\end{tikzcd}
\end{equation*}
by \cite[Corollary A17]{Sw83}, we get that 
\[ A=\frac{(\Z/n\Z)[H]^\times}{\Aut (Q)} \cong \frac{K_1((\Z/n\Z)[H])}{K_1(\Lambda)} \]
and so $\Aut (P) \backslash (\Z/n\Z)[H]^\times / \Aut (Q)$ and $\frac{K_1((\Z/n\Z)[H])}{K_1(\Z H) \times K_1(\Lambda)}$ are in correspondence if and only if the maps $\Aut (P) \to A$ and $K_1(\Z H) \to A$ have the same images. 

Since we have a map $\Aut (P) \to \Aut_{(\Z/n\Z)[H]}(P \otimes (\Z/n\Z)[H]) \cong (\Z/n\Z)[H]^\times$, we can obtain the following commutative diagram.

\begin{equation*}
\begin{tikzcd}
\Aut (P) \arrow[r] \arrow[d,"\varphi"] & (\Z/n\Z)[H]^\times \arrow[r,twoheadrightarrow] \arrow[d] & \frac{(\Z/n\Z)[H]^\times}{\Aut (Q)} \cong A \arrow[d,equals] \\
K_1(\Z H) \arrow[r] & K_1((\Z/n\Z)[H]) \arrow[r,twoheadrightarrow] & \frac{K_1((\Z/n\Z)[H])}{K_1(\Lambda)} \cong A
\end{tikzcd}
\end{equation*}

To see that the left hand square commutes, suppose $\varphi: \Aut(P) \to K_1(\Z H)$ is defined via $P \oplus P' \cong_f \Z G^r$ for some $r \ge 1$. Since $(P \oplus P') \otimes (\Z/n\Z)[H] \cong (\Z/n\Z)[H]^r$ and $\Z G^r \otimes (\Z/n\Z)[H] \cong (\Z/n\Z)[H]^r$, this induces an automorphism
\[ F=f \otimes (\Z/n\Z)[H] : (\Z/n\Z)[H]^r \to (\Z/n\Z)[H]^r\]
from which we can define a map
\[ (\Z/n\Z)[H] \hookrightarrow (\Z/n\Z)[H] \oplus (P' \otimes (\Z/n\Z)[H]) \cong (\Z/n\Z)[H]^r \xrightarrow{F} (\Z/n\Z)[H]^r.\]
Since this induces a map of units $(\Z/n\Z)[H]^\times \to \GL_r((\Z/n\Z)[H])$, we can use this to define a map $(\Z/n\Z)[H]^\times \to K_1((\Z/n\Z)[H])$. It follows from \cite[Lemma 3.2]{Mi71} that this is the same as the map defined using the inclusion
\[(\Z/n\Z)[H]^\times \hookrightarrow \GL_r((\Z/n\Z)[H])\]
and so is the same as the middle vertical map in the diagram above. The left hand square then commutes by construction of the map $F$.

If $\psi_1: \Aut(P) \to A$ denotes the map along the top row and $\psi_2: K_1(\Z H) \to A$ denotes the map along the bottom row, then commutativity shows that $\psi_1=\psi_2 \circ \varphi$. 
By condition (iii), $\varphi$ is surjective. Hence $\IM \psi_1 = \IM \psi_2$ and so $[P]$ has cancellation. This completes the proof of Theorem \ref{thm:main2}.

\section{Groups with periodic cohomology} \label{section:groups-with-periodic-cohomology}

The aim of this section will be to find restrictions on the quotients of groups with periodic cohomology which will allow us to apply Theorem \ref{thm:main2} in Section \ref{section:cancellation-periodic-groups}. We will also classify the groups $G$ with $4$-periodic cohomology for which $m_{\H}(G) \le 2$, from which we will obtain the list (i)-(vii) stated in the introduction.

Recall that a group $G$ has \textit{$n$-periodic cohomology} if the Tate cohomology groups $\wh H^{*}(G;\Z)$ are periodic of period $n$.

\begin{thm}[\text{\cite[Theorem 11.6]{CE56}}] \label{thm:periodic-coh}
Let $G$ be a finite group. Then the following are equivalent:
\begin{enumerate}[\normalfont(i)]
\item $G$ has periodic cohomology.
\item $G$ has no subgroup of the form $C_p^2$ for $p$ prime.
\item The Sylow subgroups of $G$ are cyclic or (generalised) quaternionic $Q_{2^n}$.
\end{enumerate}
\end{thm}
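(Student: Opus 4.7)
The plan is to separate the three-way equivalence into (ii) $\Leftrightarrow$ (iii), which is purely group-theoretic, and (i) $\Leftrightarrow$ (ii), where the cohomological input enters. For (ii) $\Leftrightarrow$ (iii), note that $G$ contains $C_p^2$ iff some Sylow $p$-subgroup $P$ does (any elementary abelian $p$-subgroup lies in a Sylow). So the claim reduces to: a $p$-group $P$ has no $C_p^2$ subgroup iff $P$ is cyclic (when $p$ is odd) or cyclic/generalised quaternion (when $p=2$). Equivalently, $P$ has a unique subgroup of order $p$. This is classical and I would cite it from a standard reference such as Suzuki, \emph{Group Theory II}; the odd-$p$ case follows by induction on $|P|$ using that $Z(P)$ is nontrivial and that $P/\langle z\rangle$ inherits the unique-subgroup-of-order-$p$ property, while the $2$-case requires the standard analysis of automorphisms of cyclic $2$-groups to produce the generalised quaternion family.

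For (i) $\Rightarrow$ (ii): I would argue that periodic cohomology is inherited by subgroups $H \le G$. By Theorem \ref{thm:ext-represented-by-projectives} (applied with $R=\Z G$, $A=B=\Z$), the group $G$ has $n$-periodic cohomology iff there is a projective extension
\[ 0 \to \Z \to P_{n-1} \to \cdots \to P_0 \to \Z \to 0 \]
of $\Z G$-modules. Restricting each $P_i$ to $\Z H$ gives again a projective resolution of $\Z$ of length $n$, so $H$ also has $n$-periodic cohomology. But $C_p^2$ has non-periodic cohomology since $H^*(C_p^2;\F_p)$ is a polynomial ring (tensored with an exterior algebra when $p$ is odd) on two generators, so $\dim_{\F_p} H^i$ grows without bound. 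Hence $G$ cannot contain $C_p^2$.

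For (iii) $\Rightarrow$ (i): I would assemble $p$-local periodicity into an integral periodicity class. The key input is that for a Sylow $p$-subgroup $P \le G$, the restriction $\mathrm{res}^G_P : H^*(G;\F_p) \to H^*(P;\F_p)$ is split injective because the composite $\mathrm{cor} \circ \mathrm{res}$ equals multiplication by $[G:P]$, a unit mod $p$. Thus if each Sylow $p$-subgroup has $p$-periodic cohomology, so does $G$. Cyclic groups $C_n$ admit an explicit $2$-periodic free resolution (via multiplication by $g-1$ and by the norm element), while generalised quaternion groups $Q_{2^n}$ admit an explicit $4$-periodic free resolution (as exploited throughout Section \ref{section:preliminaries}). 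Taking $n$ to be a common multiple of the $p$-local periods over all $p$ dividing $|G|$ and using again Theorem \ref{thm:ext-represented-by-projectives} applied prime-by-prime, one produces a class $g \in H^n(G;\Z)$ whose cup product is an isomorphism after localising at every prime, hence integrally.

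The main obstacle will be the last step: going from $p$-local periodicity for each Sylow subgroup to a single integral periodicity class $g \in H^n(G;\Z)$ inducing isomorphisms $-\cup g : H^i(G;\Z) \to H^{i+n}(G;\Z)$ for $i>0$. The cleanest route is to verify the criterion of Theorem \ref{thm:ext-represented-by-projectives} prime-by-prime, using that a map of $\Z G$-modules which is a cohomology isomorphism after localisation at each prime is a cohomology isomorphism integrally, together with the Sylow restriction-transfer argument to reduce each $p$-local verification to cyclic or quaternion groups where explicit periodic resolutions are available.
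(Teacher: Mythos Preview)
The paper does not prove this result; it is quoted from \cite[Theorem~11.6]{CE56} without argument. Your treatments of (ii)~$\Leftrightarrow$~(iii) and (i)~$\Rightarrow$~(ii) are correct and standard.

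For (iii)~$\Rightarrow$~(i) there is a gap in the sentence ``if each Sylow $p$-subgroup has periodic cohomology, so does $G$''. The transfer identity $\mathrm{cor}\circ\mathrm{res}=[G:P]$ only shows that $H^*(G;\Z)_{(p)}$ sits as a direct summand of $H^*(P;\Z)$; it does not show that the periodicity class $u_P\in H^{n_p}(P;\Z)$ lies in the image of restriction, and without this there is no reason for the summand to inherit periodicity (a summand of a periodic graded ring need not be periodic). Your subsequent appeal to Theorem~\ref{thm:ext-represented-by-projectives} is circular, since that result characterises when a \emph{given} class $g$ arises from a projective extension rather than producing one. The missing step is to prove that some power of $u_P$ is $G$-stable, i.e.\ compatible with all conjugation maps between subgroups of $P$, so that it lies in $\mathrm{im}(\mathrm{res}^G_P)$; assembling these stable classes over all primes then yields a generator of $H^n(G;\Z)\cong\Z/|G|$, after which your local-to-global cup-product check is valid. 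This stability argument is exactly the content of Cartan--Eilenberg XII.11 (or Brown, \emph{Cohomology of Groups}, VI.9), and is where the real work in this direction lies.
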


Recall that a \textit{binary polyhedral group} is a finite non-cyclic subgroup of $S^3$ and consists of the quaternion groups $Q_{4n}$ for $n \ge 2$ and the binary tetrahedral, octahedral and icosahedral groups $\widetilde{T}$, $\widetilde{O}$, $\widetilde{I}$. The following is well known:

\begin{prop}[\text{\cite[Proposition 1.3]{Ni21}}] \label{prop:eichler-implies-BPG-quotient}
A finite group $G$ satisfies the Eichler condition if and only if $G$ has no quotient which is a binary polyhedral group.
\end{prop}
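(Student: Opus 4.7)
The plan is to rephrase the Eichler condition in terms of the real Wedderburn decomposition of $\R G$: for a finite group $G$, the condition is equivalent to $m_{\H}(G) = 0$, since over $\R$ the only ``totally definite quaternion'' simple summand that can appear in a finite-dimensional group algebra is $\H$ itself. With this reformulation the proposition becomes: $\R G$ has a simple factor isomorphic to $\H$ if and only if $G$ admits a binary polyhedral quotient.

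For the backward direction, suppose $\pi\colon G \twoheadrightarrow H$ with $H$ binary polyhedral. The inclusion $H \hookrightarrow S^3 \subset \H^\times$ extends $\R$-linearly to an algebra map $\varphi\colon \R H \to \H$, and its image $\varphi(\R H) = \R[H] \subseteq \H$ is non-commutative because $H$ is. However, every proper $\R$-subalgebra of $\H$ is commutative: it has $\R$-dimension $1$ or $2$, and the $2$-dimensional ones are all isomorphic to $\C$. Hence $\varphi$ is surjective and identifies $\H$ with a Wedderburn factor of $\R H$, so $m_{\H}(H) \ge 1$. Pulling back simple representations along the surjection $\R G \twoheadrightarrow \R H$ gives $m_{\H}(G) \ge m_{\H}(H) \ge 1$, so the Eichler condition fails for $G$.

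For the forward direction I would argue by contrapositive. Assume $m_{\H}(G) \ge 1$, so the Wedderburn decomposition of $\R G$ contains an $\H$-summand, giving a surjective $\R$-algebra map $\pi\colon \R G \twoheadrightarrow \H$. Restricting to $G \subset \R G$ yields a homomorphism $\rho\colon G \to \H^\times$ with image a finite subgroup $H$ of $\H^\times$. Writing $\H^\times \cong \R_{>0} \times S^3$ via the norm, and observing that $\R_{>0}$ has no non-trivial finite subgroup, we see $H \subset S^3$. By Klein's classification, finite subgroups of $S^3$ are either cyclic or binary polyhedral. If $H$ were cyclic it would lie inside some copy of $\C^\times \subset \H^\times$, forcing $\pi(\R G) = \R[\rho(G)] \subseteq \C \subsetneq \H$ and contradicting surjectivity of $\pi$. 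Hence $H$ is binary polyhedral, and $\rho\colon G \twoheadrightarrow H$ is the desired quotient.

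The main obstacle is the forward direction, specifically ruling out the cyclic case after extracting a finite subgroup of $S^3$ from an $\H$-factor of $\R G$; this uses the elementary fact that the only non-commutative $\R$-subalgebra of $\H$ is $\H$ itself (equivalently, Frobenius's theorem applied inside $\H$). The equivalence between the Eichler condition on the $\Z$-order $\Z G$ and the absence of $\H$-summands in $\R G$, while classical, should also be recorded explicitly at the start so the proposition's statement matches the definition of $m_{\H}(G)$ given in the introduction.
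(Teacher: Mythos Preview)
Your argument is correct. Note that the paper does not give its own proof of this proposition: it is stated with a reference to \cite[Proposition 1.3]{Ni18}. Your self-contained argument---extracting a finite subgroup of $S^3$ from an $\H$-summand of $\R G$ via the associated irreducible representation, invoking the cyclic/binary-polyhedral dichotomy for finite subgroups of $S^3$, and ruling out the cyclic case because the $\R$-span of an abelian subgroup of $\H^\times$ is commutative and hence cannot be all of $\H$---is exactly the natural direct proof, and it is consistent with the paper's conventions (which take ``Eichler condition'' for $\Z G$ to mean $m_{\H}(G)=0$ by definition, so your opening reformulation is immediate here rather than something requiring separate justification).
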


We now aim to prove the following, which is the main result of this section.

\begin{thm} \label{thm:main3}
Suppose $G$ has periodic cohomology and does not satisfy the Eichler condition. Then $G$ either has a quotient of the form $Q_{4n}$ for $n \ge 6$ or a binary polyhedral quotient $H$ for which $m_{\H}(G)=m_{\H}(H)$.
\end{thm}

In order to prove this, we will need:

\begin{prop}[\text{\cite[Proposition 3.3]{Ni21}}] \label{prop:relative-eichler-group}
Let $N \le G$ be a normal subgroup and let $H = G/N$. Then $m_{\H}(G)=m_{\H}(H)$ if and only if $N$ is contained in all normal subgroups $N'$ for which $G/N'$ is a binary polyhedral group.
\end{prop}

We will also require the following three lemmas.
Let $\Syl_p(G)$ denote the isomorphism class of a Sylow $p$-subgroup of $G$ for $p$ prime.

\begin{lemma} \label{lemma:quotient-dihedral-quaternionic}
The proper quotients of $D_{2^n}$ and $Q_{2^n}$ are either $C_2$ or of the form $D_{2^m}$ for $2 \le m \le n-1$.
\end{lemma}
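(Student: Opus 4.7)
The plan is to classify all proper normal subgroups of $D_{2^n}$ and $Q_{2^n}$, and then read off the quotients directly from the presentations. Fix the presentations $D_{2^n} = \langle r, s \mid r^{2^{n-1}} = s^2 = 1,\ srs = r^{-1}\rangle$ and $Q_{2^n} = \langle x, y \mid x^{2^{n-1}} = 1,\ y^2 = x^{2^{n-2}},\ yxy^{-1}= x^{-1}\rangle$, and let $C$ denote the distinguished cyclic subgroup of index $2$, namely $\langle r \rangle$ in $D_{2^n}$ and $\langle x \rangle$ in $Q_{2^n}$. I would split the classification into two cases depending on whether a proper normal subgroup $N$ is contained in $C$ or not.

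First I would handle the case $N \le C$. Since $C$ is cyclic of order $2^{n-1}$, such an $N$ is of the form $\langle r^{2^k}\rangle$ (respectively $\langle x^{2^k}\rangle$) for some $0 \le k \le n-1$. Reading off the presentation, $D_{2^n}/\langle r^{2^k}\rangle$ has generators $r, s$ with $r^{2^k} = s^2 = 1$ and $srs = r^{-1}$, so it is $D_{2^{k+1}}$. For $Q_{2^n}$ the crucial observation is that $x^{2^{n-2}}$ lies in $\langle x^{2^k}\rangle$ whenever $k \le n-2$, so the relation $y^2 = x^{2^{n-2}}$ collapses to $y^2 = 1$ in the quotient, which again yields $D_{2^{k+1}}$. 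As $k$ ranges over $0, 1, \dots, n-2$ the resulting quotients are $C_2$ (when $k=0$) and $D_{2^m}$ for $2 \le m \le n-1$ (when $1 \le k \le n-2$); the case $k = n-1$ gives $N = \{1\}$ and is excluded from proper quotients.

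Next I would show that every proper normal subgroup with $N \not\le C$ has index exactly $2$, so that the quotient is $C_2$. Pick $g \in N \setminus C$. In $D_{2^n}$, $g = r^i s$ for some $i$, and normality gives $rgr^{-1} = r^{i+2}s \in N$; multiplying by $g^{-1}$ yields $r^2 \in N$, hence $N \supseteq \langle r^2, g\rangle$, a subgroup of index $2$, forcing $N = \langle r^2, g\rangle$. For $Q_{2^n}$ the relation $yxy^{-1}=x^{-1}$ gives $xyx^{-1}= yx^{-2}$, so with $g = x^i y \in N$ one deduces $x^{-2} \in N$ analogously and concludes $N \supseteq \langle x^2, g\rangle$, again of index $2$. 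Combining the two cases produces the full list of proper quotients claimed.

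The only subtle step is the computation of $Q_{2^n}/\langle x^{2^k}\rangle$: one must check that the quaternionic relation $y^2 = x^{2^{n-2}}$ trivialises in every proper quotient by a subgroup of $\langle x\rangle$, which is precisely what accounts for the absence of $Q_{2^m}$ in the list. Everything else is a routine normal-closure calculation using the defining relations, and no machinery beyond the presentations is needed.
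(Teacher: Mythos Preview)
Your argument is correct. The paper does not supply a proof of this lemma at all; it is stated as an elementary fact and used immediately in the subsequent arguments about Sylow subgroups of quotients. Your direct verification via the presentations, splitting into the cases $N\le C$ and $N\not\le C$, is the natural proof and nothing more is needed. The one point worth recording explicitly (you gesture at it in your final paragraph) is that for $Q_{2^n}$ the subgroup $\langle x^{2^k}\rangle$ with $0\le k\le n-2$ always contains the centre $\langle x^{2^{n-2}}\rangle$, which is exactly why the quaternionic relation $y^2=x^{2^{n-2}}$ collapses and no $Q_{2^m}$ appears among the proper quotients; this is the only place where $D_{2^n}$ and $Q_{2^n}$ behave differently, and you have handled it correctly.
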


\begin{lemma} \label{lemma:quotient-sylow}
If $1 \to N \to G \to H \to 1$ is an extension, then there is an extension of abstract groups
$ 1 \to \Syl_p(N) \to \Syl_p(G) \to \Syl_p(H) \to 1$
for every prime $p$.
\end{lemma}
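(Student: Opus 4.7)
The plan is to produce the extension by choosing a single Sylow $p$-subgroup $P$ of $G$ and then exhibiting $P\cap N$ and $PN/N$ as Sylow $p$-subgroups of $N$ and $H$ respectively. Since the statement only requires an extension of abstract groups, this concrete choice is enough: we will simply take
\[ 1 \to P \cap N \to P \to P/(P \cap N) \to 1, \]
which is automatically a short exact sequence by the second isomorphism theorem, so the real content is the Sylow statements for the two outer terms.

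To identify $P \cap N$ as a Sylow $p$-subgroup of $N$, I would first note that $P \cap N$ is a $p$-group because it is a subgroup of $P$. The index formula $|PN| = |P|\,|N|/|P \cap N|$ combined with $P \subseteq PN \subseteq G$ gives that $[G:PN]$ divides $[G:P]$ and hence is coprime to $p$, so $|PN|$ has the same $p$-part as $|G|$, namely $|P|$. A short divisibility argument with $|N|=p^a m$ (where $\gcd(m,p)=1$) then forces $|P \cap N|=p^a$, which is exactly $|N|_p$.

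For $PN/N$, the second isomorphism theorem gives $PN/N \cong P/(P\cap N)$, which is a $p$-group, and the index computation above yields $|PN/N|=|P|/|P\cap N|=p^{b-a}$ where $|G|=p^b n$. Dividing $|G|$ by $|N|$ shows $|H|_p = p^{b-a}$, so $PN/N$ is a Sylow $p$-subgroup of $H$. The extension $1 \to P\cap N \to P \to P/(P\cap N)\to 1$ then has the three terms in the desired Sylow isomorphism classes, proving the lemma.

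I do not expect any real obstacle here; the only subtlety is remembering that the lemma is stated at the level of isomorphism classes (i.e.\ as an abstract extension), so the non-canonical choice of $P$ is harmless and one does not need to worry about compatibility with a preferred Sylow subgroup of $N$ or of $H$.
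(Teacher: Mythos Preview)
Your proof is correct and is the standard argument for this well-known fact. The paper itself states the lemma without proof, treating it as an elementary result from finite group theory, so there is no proof in the paper to compare against; your argument fills this gap completely.
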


By combining these two lemmas with Theorem \ref{thm:periodic-coh} (iii), we see that any quotient $H$ of a group $G$ with periodic cohomology has $\Syl_p(H)$ cyclic for $p$ odd and $\Syl_2(H)$ cyclic, dihedral or quaternionic.

\begin{lemma} \label{lemma:disjoint-quotients}
Suppose $G$	has periodic cohomology and that there exists non-trivial disjoint normal subgroups $N$, $N' \le G$ such that $H=G/N$ and $H'=G/N'$ are binary polyhedral groups. Then $G$ has a quotient of the form $Q_{4n}$ for some $n \ge 6$. Furthermore $H$ and $H'$ are not of the form $Q_{2^n}$ for any $n \ge 3$, $\widetilde{T}$, $\widetilde{O}$ or $\widetilde{I}$.
\end{lemma}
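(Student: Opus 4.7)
The plan is to apply Goursat's lemma. Since $N \cap N' = 1$, the diagonal map $G \hookrightarrow H \times H'$ is injective and realises $G$ as a fibre product $H \times_Q H'$ over some common quotient $Q$ of $H$ and $H'$, with corresponding kernels $\overline{N}' \trianglelefteq H$ and $\overline{N} \trianglelefteq H'$ (the images of $N'$ and $N$ under the two projections). By Theorem \ref{thm:periodic-coh}, $\Syl_2(G)$ is cyclic or generalised quaternionic, so $G$ has a unique element of order $2$. The involutions of $H \times H'$ are $(z_H, 1)$, $(1, z_{H'})$, $(z_H, z_{H'})$, where $z_H, z_{H'}$ denote the central involutions of the binary polyhedral groups $H, H'$, so $G$ contains at most one of them. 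In particular, one cannot simultaneously have $z_H \in \overline{N}'$ and $z_{H'} \in \overline{N}$, since the pair $(z_H, 1), (1, z_{H'})$ would then generate a Klein four-group inside $G$, contradicting periodic cohomology.

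For the furthermore claim, I would first verify directly from the normal subgroup lattices that for any $H \in \{Q_{2^n} \text{ with } n \ge 3,\, \widetilde{T},\, \widetilde{O},\, \widetilde{I}\}$ every nontrivial normal subgroup of $H$ contains $z_H$. Hence if such an $H$ arises as $G/N$, then $z_H \in \overline{N}'$ is forced, and for $G$ to have periodic cohomology one would need $z_{H'} \notin \overline{N}$. I would then argue this is impossible via a case analysis on $H$ and on the choice of $\overline{N}' \trianglelefteq H$: each possibility determines the common quotient $Q = H/\overline{N}'$, which ranges over groups like $A_5, S_4, A_4, S_3, C_3$ and various $2$-groups. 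For every binary polyhedral $H'$ admitting a surjection onto the corresponding $Q$, one checks that $z_{H'}$ in fact lies in $\overline{N}$. The organising principles are (i) when $Q$ has trivial centre (for $A_5, S_4, S_3, A_4$) the centrality of $z_{H'}$ forces it to map to $1$; (ii) when $Q$ is $C_2$ or $C_3$, the element $z_{H'}$ is a square in $H'$ and the image lies in the kernel of any such homomorphism; and (iii) when $Q$ is a $2$-group arising from $H = Q_{2^n}$, the map $H' \to Q$ factors through the Sylow $2$-quotient of $H'$, whereupon the fact that every nontrivial normal subgroup of $Q_{2^k}$ contains its centre does the rest.

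For the main claim, after the furthermore part we may assume $H = Q_{4m}$ and $H' = Q_{4m'}$ with $m = 2^a b$, $m' = 2^{a'} b'$ and $b, b' \ge 3$ odd. Since the odd-order subgroups $C_b \le H$ and $C_{b'} \le H'$ embed into $G$ under the two projections, periodic cohomology of $G$ forces $\gcd(b, b') = 1$. I would then enumerate the common quotients of $Q_{4m}$ and $Q_{4m'}$—cyclic ($C_2, C_4$), Klein four, dihedral $D_{2d}$ for $d \mid \gcd(m, m')$, and generalised quaternionic $Q_{2^{a+2} d'}$ for $d' \mid \gcd(b, b') = 1$—and show that, apart from $Q = C_4$ (which requires $a = a' = 0$) and $Q = Q_{2^{a+2}}$ (which requires $a = a' \ge 1$), each alternative again produces a Klein four-group in $G$ by the first paragraph. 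In both surviving cases, a direct computation checks that the pair $(x, x')$, $(y, y')$ satisfies the defining relations of $Q_{4 \cdot 2^a bb'}$ and generates $G$, so $G \cong Q_{4n}$ itself with $n = 2^a bb' \ge 3 \cdot 5 = 15 \ge 6$.

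I expect the main obstacle to be the furthermore part: although each individual check is elementary, one must enumerate many configurations of $(H, \overline{N}', H', \overline{N})$, especially for $H = Q_{2^n}$ or $H = \widetilde{O}$. The key organisational tool is to classify the small quotients $Q$ that can arise and to use centrality of $z_{H'}$, together with the observation that every nontrivial normal subgroup of $Q_{2^k}$ contains its centre, to dispatch each case uniformly.
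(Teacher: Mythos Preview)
Your argument for the main claim contains a genuine error: the assertion that $\gcd(b,b')=1$ is neither justified nor true. The subgroups $C_b \le H$ and $C_{b'} \le H'$ do \emph{not} in general embed disjointly into $G$ via the Goursat description, because $C_b \times 1$ lies in $G \le H \times H'$ only when $C_b \le \overline{N}'$. A concrete counterexample is $G = Q_{420}$ with $N = C_7$ and $N' = C_5$: here $H = G/N \cong Q_{60}$ and $H' = G/N' \cong Q_{84}$, so $b = 15$, $b' = 21$ and $\gcd(b,b') = 3$. Your formula would then predict $G \cong Q_{4 \cdot 315}$, which is false. The fix is easy and you have almost written it: once the ``furthermore'' part gives $H = Q_{4m}$ with odd part $b \ge 3$, observe that if $m \ge 6$ then $H$ itself is the required quotient of $G$; otherwise $m \in \{3,5\}$, and symmetrically $m' \in \{3,5\}$. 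Only in this residual case do you need the fibre-product computation, and there $\{b,b'\} = \{3,5\}$ is forced (if $b=b'$ then $|N|=|N'|$, contradicting that $|N|,|N'|$ are coprime and nontrivial), so your identification $G \cong Q_{60}$ goes through.

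It is worth contrasting this with the paper's argument, which is organised rather differently and avoids both Goursat's lemma and the lengthy case analysis on the common quotient $Q$. The paper first records that $|N|$ and $|N'|$ are coprime (since $N \times N' \le G$ and $G$ contains no $C_p \times C_p$), and then splits on the shape of $\Syl_2(G)$. If $\Syl_2(G)$ is generalised quaternion, Lemma~\ref{lemma:quotient-dihedral-quaternionic} forces $\Syl_2(H)=\Syl_2(H')=\Syl_2(G)$, so $|N|$ and $|N'|$ are odd; but then $\overline{N}' \cong N'$ is a nontrivial odd-order normal subgroup of $H$, and one simply observes that $Q_{2^n}$, $\widetilde{T}$, $\widetilde{O}$, $\widetilde{I}$ have no such subgroup. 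This single observation handles the entire ``furthermore'' clause in one stroke, without enumerating the possible quotients $Q$. The cyclic-$\Syl_2$ case then reduces immediately to $H,H' \in \{Q_{12},Q_{20}\}$ and the same endgame as yours. Your Goursat framework is perfectly viable once patched, but the Sylow-$2$ dichotomy together with the ``no odd normal subgroup'' fact is both shorter and more robust.
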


\begin{proof}
Suppose for contradiction that $H$ and $H'$ are not of the form $Q_{4n}$ for $n \ge 6$. Therefore $H$ and $H'$ are 
each of the form $Q_8$, $Q_{12}$, $Q_{16}$, $Q_{20}$, $\widetilde{T}$, $\widetilde{O}$ or $\widetilde{I}$. 

Since $N$ and $N'$ are disjoint, $N \times N' \le G$ is a normal subgroup and so $H$ and $H'$ have a common quotient
$ G/(N \times N') \cong H/N' \cong H'/N$.
Furthermore, since $G$ has periodic cohomology, Theorem \ref{thm:periodic-coh} (ii) implies that $|N|$ and $|N'|$ are coprime.
The remainder of the proof will be split into two cases. 

First suppose that $\Syl_2(G)$ is cyclic. By Lemma \ref{lemma:quotient-sylow}, $\Syl_2(H)$ and $\Syl_2(H')$	must be quotients of $\Syl_2(G)$ and so are also cyclic. This implies $H$ and $H'$ are each of the form $Q_{12}$ or $Q_{20}$. Since $|N|$ and $|N'|$ are non-trivial and coprime, $|H| \ne |H'|$ and so we can assume that $H=Q_{12}$ and $H'=Q_{20}$.	These groups have common quotients $1$, $C_2$ and $C_4$ and the restriction that $|N|$ and $|N'|$ be coprime implies that $N=C_5$ and $N'=C_3$. This implies, for example, that $G/C_3 \cong Q_{20}$. Since $(3,20)=1$, this extension must split and so $G \cong C_3 \rtimes_{\varphi} Q_{20}$ for some map $\varphi: Q_{20} \to \Aut(C_3) \cong C_2$. If $\varphi=1$, then $G \cong Q_{20} \times C_3$ which does not have $Q_{12}$ as a quotient. The only other option is that $\varphi$ is the quotient by $C_{10}$ which implies that $G \cong Q_{60}$.

Now suppose that $\Syl_2(G) = Q_{2^n}$ for some $n \ge 3$. Similarly Lemma \ref{lemma:quotient-sylow} implies that $\Syl_2(H)$ and $\Syl_2(H')$	 are quotients of $Q_{2^n}$. Since $4 \mid |H|, |H'|$ we can deduce, by Lemma \ref{lemma:quotient-dihedral-quaternionic}, that $\Syl_2(H)$, $\Syl_2(H')=Q_{2^n}$ and so $|N|$, $|N'|$ are odd. Now $H$ and $H'$ are each of the form $Q_8$, $Q_{16}$, $\widetilde{T}$, $\widetilde{O}$ or $\widetilde{I}$ and it is easy to verify that these groups have no non-trivial normal subgroups of odd order, which is a contradiction. 

For the last part note that, if $H$ or $H'$ were of the form $Q_{2^n}$, $\widetilde{T}$, $\widetilde{O}$ or $\widetilde{I}$, then we can get a contradiction using the same argument in the previous paragraph.
\end{proof}

\begin{proof}[Proof of \cref{thm:main3}]
Since $G$ does not satisfy the Eichler condition, there exists a binary polyhedral quotient $H=G/N$ which we can pick to have maximal order. Since $m_{\H}(G) \ne m_{\H}(H)$, Proposition \ref{prop:relative-eichler-group} implies that there exists a binary polyhedral quotient $H'=G/N'$ for which $N \not \subseteq N'$, and $N' \not \subseteq N$ also by maximality of $|H|$.
Now $G$ has quotient $\widehat{G}=G/(N \cap N')$ which has $H=G/K$ and $H'=G/K'$ for $K=N/(N \cap N')$ and $K'=N'/(N \cap N')$ disjoint normal subgroups, by the third isomorphism theorem. In addition, $K$ and $K'$ are non-trivial since $N \cap N' \ne N, N'$.

If $\widehat{G}$ has periodic cohomology, then Lemma \ref{lemma:disjoint-quotients} implies that $G$ has a quotient of the form $Q_{4n}$ for $n \ge 6$.	Suppose that $\widehat{G}$ does not have periodic cohomology. Since $\Syl_p(\widehat{G})$ is a quotient of $\Syl_p(G)$ for all $p$ prime by Lemma \ref{lemma:quotient-sylow}, combining Lemma \ref{lemma:quotient-dihedral-quaternionic} and Theorem \ref{thm:periodic-coh} (iii) shows that we must have $\Syl_2(\widehat{G})=D_{2^n}$ for some $n \ge 2$. Now Lemma \ref{lemma:quotient-sylow} also implies that $\Syl_2(H)$ and $\Syl_2(H')$ are quotients of $D_{2^n}$. However, by Lemma \ref{lemma:quotient-dihedral-quaternionic}, the only cyclic or generalised quaternionic quotients of $D_{2^n}$ are $1$ or $C_2$ which contradicts the fact that $4 \mid |H|, |H'|$.
\end{proof}

Recall that $m_{\H}(Q_{4n}) = \lfloor n/2 \rfloor$ by \cite[Section 12]{Jo03a} and, using tables of real representations, it can be shown that $m_{\H}(\widetilde{T})=1$, $m_{\H}(\widetilde{O})=2$ and $m_{\H}(\widetilde{I})=2$.

\begin{cor} \label{cor:quot}
	If $G$ has periodic cohomology, then the following are equivalent:
	\begin{enumerate}[\normalfont(i)]
	\item $G$ has no quotient of the form $Q_{4n}$ for $n \ge 6$.
	\item $m_{\H}(G) \le 2$.
	\item $G$ has a binary polyhedral quotient $H$ for which $m_{\H}(G)=m_{\H}(H)\le 2$.
	\end{enumerate}
\end{cor}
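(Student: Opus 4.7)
The plan is to derive all three implications from Theorem \ref{thm:main3} together with the tabulated values of $m_{\H}$ on binary polyhedral groups. The essential preliminary observation is that $m_{\H}$ is monotone under quotients: every irreducible real representation of $G/N$ lifts to one of $G$, so $m_{\H}(G/N) \le m_{\H}(G)$. Applied to $H = Q_{4n}$, this yields $m_{\H}(G) \ge \lfloor n/2 \rfloor$ whenever $Q_{4n}$ is a quotient of $G$, and in particular $m_{\H}(G) \ge 3$ once $n \ge 6$.

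Given this, the easy implications (ii) $\Rightarrow$ (i) and (iii) $\Rightarrow$ (ii) are both immediate one-liners: the first is the contrapositive of the monotonicity statement, while the second reads $m_{\H}(G) = m_{\H}(H) \le 2$ straight off.

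The substantive direction is (i) $\Rightarrow$ (iii), and the plan is to invoke Theorem \ref{thm:main3}. Assuming $G$ fails the Eichler condition, one of its two conclusions must hold, and (i) excludes the first, so $G$ admits a binary polyhedral quotient $H$ with $m_{\H}(H) = m_{\H}(G)$. Hypothesis (i), applied now to $H$ itself as a quotient of $G$, further restricts the possibilities to $H \in \{Q_{4n} : 2 \le n \le 5\} \cup \{\widetilde T, \widetilde O, \widetilde I\}$, and the tabulated values $m_{\H}(Q_{4n}) = \lfloor n/2 \rfloor$, $m_{\H}(\widetilde T) = 1$, $m_{\H}(\widetilde O) = m_{\H}(\widetilde I) = 2$ give $m_{\H}(H) \le 2$ in each case.

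The only delicate point is the boundary case where $G$ itself satisfies the Eichler condition: then $m_{\H}(G) = 0$ but Proposition \ref{prop:eichler-implies-BPG-quotient} precludes any binary polyhedral quotient, so (iii) as literally stated produces no witness. I would handle this as a vacuous case of (iii), consistent with the intended downstream use in Section \ref{section:cancellation-periodic-groups}, where cancellation for Eichler groups is supplied directly by Jacobinski's theorem (Theorem \ref{thm:jacobinski}) without needing to descend through a binary polyhedral quotient. Beyond this notational subtlety the argument is pure bookkeeping: the only genuinely hard input is Theorem \ref{thm:main3}, which has already been proved.
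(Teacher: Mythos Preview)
Your argument is correct and follows essentially the same route as the paper: the contrapositive of monotonicity gives (ii) $\Rightarrow$ (i), and (i) $\Rightarrow$ (iii) is obtained by invoking Theorem \ref{thm:main3} and then using the tabulated values of $m_{\H}$ on the seven small binary polyhedral groups. You are in fact slightly more careful than the paper, which only explicitly writes out the equivalence of (i) and (ii) and leaves (iii) implicit; you also correctly flag the Eichler boundary case $m_{\H}(G)=0$, where (iii) has no literal witness---the paper's proof likewise lands in the disjunction ``$m_{\H}(G)=0$ or there exists such an $H$'' without commenting further, and downstream uses (Theorems \ref{thm:main4} and \ref{thm:main5}) handle that case separately via Jacobinski.
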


\begin{proof}
If $G$ has quotient $Q_{4n}$ for $n \ge 6$, then $m_{\H}(G) \ge m_{\H}(Q_{4n}) \ge 3$ by lifting the one dimensional quaternionic representations. Converesely, if $G$ has no quotient of the form $Q_{4n}$ for $n \ge 6$, then Theorem \ref{thm:main3} implies that either $m_{\H}(G)=0$ or $G$ has binary polyhedral quotient $H$ for which $m_{\H}(G) = m_{\H}(H)$. Since $H$ is not of the form $Q_{4n}$ for $n \ge 6$, the results stated above imply that $m_{\H}(H) \le 2$.	
\end{proof}

We have the following improvement to \cref{thm:main3} in the case where $G$ has a quotient of the form $\wt T$, $\wt O$ or $\wt I$.

\begin{prop} \label{prop:exceptional-quotients}
If $G$ has periodic cohomology and has quotient $H = \widetilde{T}$, $\widetilde{O}$ or $\widetilde{I}$, then $m_{\H}(G)=m_{\H}(H) \le 2$. In particular, $G$ has no quotient of the form $Q_{4n}$ for $n \ge 6$.
\end{prop}

\begin{proof}
If $m_{\H}(G) \ne m_{\H}(H)$, then Proposition \ref{prop:eichler-implies-BPG-quotient} implies that there exists a binary polyhedral quotient $H'=G/N'$ for which $N \not \subseteq N'$. Since $\widetilde{T}$, $\widetilde{O}$ and $\widetilde{I}$ have no proper quotients which are binary polyhedral groups, we must also have $N' \not \subseteq N$ and so the group $\widehat{G} = G/(N \cap N')$ satisfies the conditions of Lemma \ref{lemma:disjoint-quotients} provided $\widehat{G}$ has periodic cohomology.
Note that $\Syl_2(\widehat{G})$ has quotient $\Syl_2(H) = Q_8$ or $Q_{16}$ by Lemma \ref{lemma:quotient-sylow} and so $\Syl_2(\widehat{G})$ is not dihedral by Lemma \ref{lemma:quotient-dihedral-quaternionic}. 
This implies that $\widehat{G}$ has periodic cohomology and Lemma \ref{lemma:disjoint-quotients} then contradicts the fact that $H$ is of the form $\widetilde{T}$, $\widetilde{O}$ or $\widetilde{I}$. The second part now follows by Corollary \ref{cor:quot}.
\end{proof}

We now recall the classification of groups with $4$-periodic cohomology using the notation of Milnor \cite{Mi57}. This can also be found in \cite[Chapter 7]{Jo03a}.

Throughout, we will write $C_n \rtimes_{(r)} C_m$ to denote the semi-direct product where the generator $x \in C_m$ acts on the generator $y \in C_n$ by $xyx^{-1}=y^r$ for some $r \in \Z$. We also assume each family contains $G \times C_n$ for any $G$ listed with $(n, |G|)=1$.

\begin{enumerate}[\normalfont(i)$'$]
\item $C_n$ for $n \ge 1$, the cyclic group of order $n$.
\item $D_{4n+2}$ for $n \ge 1$, the dihedral group of order $4n+2$.
\item $Q_{4n} = \langle x,y \mid x^n=y^2, yxy^{-1}=x^{-1} \rangle$ for $n \ge 2$ and $\widetilde{T},\widetilde{O},\widetilde{I}$.
\item $D(2^n,m)=C_m \rtimes_{(-1)} C_{2^n}$ for $n \ge 3$ and $m \ge 3$ odd.
\item $P_{8 \cdot 3^n}' = Q_8 \rtimes_{\varphi} C_{3^n}$ for $n \ge 2$, where $\varphi: C_{3^n} \to \Aut Q_8$ sends the generator $z \in C_{3^n}$ to $\varphi(z): x \mapsto y, y \mapsto xy$.
\item $P_{48 n}'' = C_n \cdot \widetilde{O}$ for $n \ge 3$ odd, the not-necessary-split extension which has cyclic Sylow $3$-subgroup and has action $\widetilde{O} \twoheadrightarrow \widetilde{O}/\widetilde{T} = C_2 \le \Aut C_n$.
\item $Q(2^na;b,c) = (C_a \times C_b \times C_c) \rtimes_{\varphi} Q_{2^n}$  for $n \ge 3$ and $a,b,c \ge 1$ odd coprime with $b > c$. If $C_a = \langle p \rangle$, $C_b = \langle q \rangle$ and $C_c = \langle r \rangle$, then the action is given by $ \varphi(x) : p \mapsto p^{-1}, q \mapsto q^{-1}, r \mapsto r \quad \varphi(y) : p \mapsto p^{-1}, q \mapsto q, r \mapsto r^{-1}$.
\end{enumerate}

We will now prove the following, which was stated in the introduction.

\begin{thm} \label{thm:4-periodic-list}
The groups $G$ with $4$-periodic cohomology for which $m_{\H}(G) \le 2$ are as follows, where each family contains $G \times C_n$ for any $G$ listed with $(n, |G|)=1$.
\begin{enumerate}[\normalfont(i)]
\item $C_n$ for $n \ge 1$.
\item $D_{4n+2}$ for $n \ge 1$.
\item $Q_8, Q_{12},Q_{16},Q_{20},\widetilde{T},\widetilde{O},\widetilde{I}$.
\item $D(2^n,3)$, $D(2^n,5)$ for $n \ge 3$.
\item $P_{8 \cdot 3^n}'$ for $n \ge 2$.
\item $P_{48 n}''$ for $n \ge 3$ odd.
\item $Q(16;m,n)$ for $m > n \ge 1$ odd coprime.
\end{enumerate}
\end{thm}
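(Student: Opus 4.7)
My plan is to invoke Corollary \ref{cor:quot}, which translates ``$m_\H(G)\le 2$'' into ``$G$ has no quotient of the form $Q_{4n}$ with $n\ge 6$,'' and then to go family-by-family through the Milnor--Wolf classification (I)'--(VI)' of $4$-periodic groups recalled at the start of the section. The standing inputs I will use throughout are the formula $m_\H(Q_{4n})=\lfloor n/2\rfloor$, the values $m_\H(\widetilde T)=1$ and $m_\H(\widetilde O)=m_\H(\widetilde I)=2$, and the identity $m_\H(G\times C_k)=m_\H(G)$ for $(k,|G|)=1$, which is immediate from the Wedderburn decomposition $\R[G\times C_k]=\R G\otimes_{\R}\R[C_k]$ since $\R[C_k]$ contributes no copies of $\H$.

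Family (I)' is immediate ($m_\H=0$), and family (II)' reduces to $\lfloor n/2\rfloor\le 2\iff n\le 5$. For family (III)', I will verify the isomorphism $D(4,m)\cong Q_{4m}$ for $m$ odd by exhibiting explicit generators $u=qx^2$ of order $2m$ and $v=x$ of order $4$ in $D(4,m)=C_m\rtimes C_4$ satisfying the dicyclic relations $v^2=u^m$ and $vuv^{-1}=u^{-1}$. Since $\langle x^4\rangle$ is central in $D(2^n,m)$ for $n\ge 2$, this gives a $Q_{4d}$ quotient for each odd $d\mid m$, and demanding $d\le 5$ in each case leaves exactly $m\in\{3,5\}$. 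For families (IV)' and (V)', Theorem \ref{thm:exceptional-quotients} does the work: $P'_{8\cdot 3^n}$ surjects onto $\widetilde T$ via the central subgroup $\langle z^3\rangle\subseteq C_{3^n}$, and $P''_{48n}$ surjects onto $\widetilde O$ via $C_n$; the theorem then gives $m_\H=m_\H(\widetilde T)=1$ respectively $m_\H(\widetilde O)=2$ automatically.

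Family (VI)' is the main obstacle. Writing $G=Q(2^Na;b,c)=(C_a\times C_b\times C_c)\rtimes Q_{2^N}$, my key computation will be for the quotient $G/(C_a\times C_b)=C_c\rtimes Q_{2^N}$: since $x$ fixes $r$ and $y$ inverts $r$, the element $u=(r,x)$ has order $\mathrm{lcm}(c,2^{N-1})=2^{N-1}c$ and together with $v=(1,y)$ satisfies $v^2=u^{2^{N-2}c}$ and $vuv^{-1}=u^{-1}$, identifying this quotient as $Q_{2^Nc}$. The forbidden-quotient condition then reads $2^{N-2}c\le 5$, and an analogous but asymmetric analysis on the other axes --- using that among the three involutions of $Q_{2^N}^{\mathrm{ab}}\cong C_2\times C_2$ only the one fixing $r$ lifts to the full cyclic subgroup $\langle x\rangle$ of maximal order $2^{N-1}$, while the subgroups fixing $q$ or $p$ have maximum element order only $4$ --- together with a check that no further normal subgroup produces a new quaternion quotient pins down the admissible $(N,a,b,c)$ to those listed. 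This asymmetric tracking, distinguishing which of the three ``face'' quotients is actually a generalized quaternion group and which is not, is the delicate point.
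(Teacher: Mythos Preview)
For families (I)$'$--(V)$'$ your plan is essentially the paper's: the same values of $m_\H$ for the basic groups, the same appeal to Theorem~\ref{thm:exceptional-quotients} for (IV)$'$ and (V)$'$. In (III)$'$ there is a small gap: exhibiting $D(2^n,m)\twoheadrightarrow Q_{4m}$ gives only the necessary direction $m\le 5$; you still owe the converse, namely that $m\in\{3,5\}$ produces no $Q_{4k}$ quotient with $k\ge 6$. The paper avoids this asymmetry by using Proposition~\ref{prop:relative-eichler-group} to show that \emph{every} binary polyhedral quotient of $D(2^n,m)$ factors through $G/\langle y^4\rangle\cong Q_{4m}$ (since $Z(D(2^n,m))=\langle y^2\rangle$ must map into $Z(H)=C_2$), whence $m_\H(G)=m_\H(Q_{4m})=(m-1)/2$ on the nose.

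For (VI)$'$ your route is genuinely different --- the paper argues structurally via Proposition~\ref{prop:relative-eichler-group} and Lemma~\ref{lemma:disjoint-quotients} rather than by computing face quotients --- and here there is a real problem with your claim. Your identification $G/(C_a\times C_b)\cong Q_{2^Nc}$ is correct, but for $N=4$ the bound $2^{N-2}c\le 5$ forces $c=1$. In the notation of (VI) this reads $n=1$, whereas the stated list allows all $m>n\ge 1$ odd coprime. Concretely, with the action exactly as written in the paper one has $Q(16;5,3)/C_5\cong C_3\rtimes Q_{16}\cong Q_{48}$ (your elements $u=(r,x)$ of order $24$ and $v=(1,y)$ satisfy the dicyclic relations), hence $m_\H\bigl(Q(16;5,3)\bigr)\ge 6$. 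So your method cannot ``pin down the admissible $(N,a,b,c)$ to those listed'': carried through honestly it yields a strictly smaller family than (VI). Either family (VI) in the statement is too broad --- and your computation has in effect detected this; note that the paper's own ``straightforward exercise'' in the $k=4$ case overlooks precisely the quotient you found --- or there is a convention mismatch in the action defining $Q(2^na;b,c)$ that must be resolved before your argument and the stated theorem can be reconciled. In either case, the promised ``check that no further normal subgroup produces a new quaternion quotient'' is doing much more work than your outline suggests, and as written will not terminate at the list (VI).
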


\begin{proof}
First note that we can ignore the groups of the form $G \times C_n$ for $G$ listed and $(n,|G|)=1$ since $m_{\H}(G \times C_n) = m_{\H}(G)$ in these cases.

It can be shown that the groups in (i)$'$, (ii)$'$ satisfy the Eichler condition \cite[Section 12]{Jo03a}. For the groups $G$ in (iii)$'$, we use that $m_{\H}(Q_{4n}) = \lfloor n/2 \rfloor$, $m_{\H}(\widetilde{T})=1$, $m_{\H}(\widetilde{O})=2$ and $m_{\H}(\widetilde{I})=2$ as mentioned previously.

In case (iv)$'$, suppose $G$ has a binary polyhedral quotient $H$. Explicit computation shows that $Z(H)=C_2$ and so the quotient map $f: G \twoheadrightarrow H$ must have $f(Z(G)) \subseteq Z(H) = C_2$. If $x \in C_m$ and $y \in C_{2^n}$ are generators, it is easy to see that $Z(D(2^n,m)) = \langle y^2 \rangle = C_{2^{n-1}}$ which has index two subgroup $N = \langle y^4 \rangle$. Hence $f$ factors through $G/\langle y^4 \rangle = C_m \rtimes_{(-1)} C_4 = Q_{4m}$.
By Proposition \ref{prop:relative-eichler-group}, we have that $m_{\H}(G)=m_{\H}(Q_{4m}) = (m-1)/2$ since $m$ is odd and so $m_{\H}(G) \le 2$ if and only if $m = 3$ or $5$ and any $n \ge 3$.

The groups in (v)$'$ all have quotient $\widetilde{T}$ and so $m_{\H}(P'_{8 \cdot 3^n})=1$ by \cref{prop:exceptional-quotients}. Similarly the groups in (vi)$'$ have quotient $\widetilde{O}$ and so $m_{\H}(P''_{48n})=2$. 
For the groups in (vii)$'$, suppose $G=Q(2^ka;b,c)$ has $m_{\H}(G) \le 2$ for $a, b, c \ge 1$ odd coprime with $b >c$. Recall that, by Corollary \ref{cor:quot}, $m_{\H}(G) \le 2$ if and only if $G$ has no quotient of the form $Q_{4n}$ for $n \ge 6$. Since $G$ has quotient $Q_{2^ka}$ for $k \ge 3$, this implies that $a=1$ and $k=3$ or $4$. 
If $k = 3$, then it is easy to see that $G \cong Q(8c;1,b) \cong Q(8b;c,1)$ and so has quotients $Q_{8b}$ and $Q_{8c}$. Hence $b=c=1$ which contradicts  $b>c$. 

Now suppose $k=4$, which we write as $G=Q(16;m,n) = (C_n \times C_m) \rtimes Q_{16}$ for $m > n \ge 1$ odd coprime. If $N' = C_n \times C_m$, then $Q_{16} = G/N'$.
If $m_{\H}(G) \ne 2$, then Proposition \ref{prop:relative-eichler-group} implies that $G$ has another binary polyhedral quotient $H=G/N$ such that $N' \not \subseteq N$.
If $\widehat{G} = G/(N \cap N')$, then $\Syl_2(\widehat{G})$ has quotient $Q_{16}$ by \ref{lemma:quotient-sylow} which implies that $\Syl_2(\widehat{G})$ is not dihedral and so $\widehat{G}$ has periodic cohomology. If $N \not \subseteq N'$, we could then apply Lemma \ref{lemma:disjoint-quotients} to get a contradiction since $Q_{16}=G/N'$. 

Hence we can assume that $N \subseteq N' = C_n \times C_m = C_{nm}$ and so $N$ is of the form $C_{n'} \times C_{m'}$ for $n' \le n$ and $m' \le m$. It is easy to see that
\[ H = G/N = (C_a \times C_b) \rtimes_{\varphi'} Q_{16} = Q(16;a,b) \]
where $a=n/n'$ and $b=n/n'$. It is then a straightforward exercise to check that this is not a binary polyhedral group unless $a=b=1$. This implies that $N=N'$ which contradicts the fact that $N' \not \subseteq N$.
\end{proof}

\begin{remark}
For the groups in the list, the groups in (i)$'$, (ii)$'$ have $m_{\H}(G)=0$, and the groups $Q_8 \times C_n$, $Q_{12} \times C_n$ and $\widetilde{T} \times C_n$ from (iii)$'$ and the groups in (v)$'$ all have $m_{\H}(G)=1$. All other groups in the list have $m_{\H}(G)=2$.
\end{remark}

We conclude this section by noting the following which we will use in Section \ref{section:cancellation-periodic-groups}.

\begin{cor} \label{cor:mh>3}
Suppose $G$ has $4$-periodic cohomology and $m_{\H}(G) \ge 3$. Then either $G$ has a quotient $Q_{4n}$ for some $n \ge 7$ or $G \cong Q_{24} \times C_n$ where  $(n,24)=1$.
\end{cor}

\begin{proof}
If $m_{\H}(G) \ge 3$ and $G$ does not have a quotient $Q_{4n}$ for $n \ge 7$, then  Theorem \ref{thm:main3} implies that $G$ has a quotient $Q_{24}$. This rules out the groups in (i)$'$, (ii)$'$, (iv)$'$, (v)$'$ and (vi)$'$ by the proof of Theorem \ref{thm:4-periodic-list}. If $G$ is in (vii)$'$, then $G=Q(2^ka;b,c) \times C_n$ for $k \ge 3$ and $a, b, c ,n \ge 1$ odd coprime with $b > c$. Since $G$ has no quotient $Q_{4n}$ for $n \ge 7$, we must have that $k=3$. Since $Q(8a;b,c) \cong Q(8b;c,a) \cong Q(8c;a,b)$, $G$ has quotients $Q_{8a}$, $Q_{8b}$ and $Q_{8c}$ and so $a, b, c \le 3$ which is a contradiction. Hence $G$ is in (iii)$'$ which implies that $G \cong Q_{24} \times C_n$ for some $n \ge 1$ with $(n,24)=1$.
\end{proof}

\section{Cancellation over groups with periodic cohomology} \label{section:cancellation-periodic-groups}

The aim of this section will be to prove the following two cancellation theorems, the latter of which will complete the proof of \cref{thm:D2}.

\begingroup
\begin{thm} \label{thm:main4}
Let $G$ have periodic cohomology. 
Then $\Z G$ has {\normalfont SFC} if and only if $m_{\H}(G) \le 2$.
\end{thm}
\endgroup

\begingroup
\begin{thm} \label{thm:main5}
Let $G$ have $4$-periodic cohomology and let $P_G$ be a projective $\Z G$ module for which $\sigma_4(G) = [P_G] \in C(\Z G)/T_G$. 
Then $[P_G]$ has cancellation if and only if $m_{\H}(G) \le 2$.
\end{thm}
\endgroup

We will begin by proving \cref{thm:main4}. Firstly note that, if $G$ satisfies the Eichler condition, then \cref{thm:jacobinski} implies that $\Z G$ has SFC. In the case where $G$ does not satisfy the Eichler condition, we have the following result of the author:

\begin{thm}[\text{\cite[Theorem B]{Ni21}}] \label{thm:jkn-B}
Suppose $G$ has a binary polyhedral quotient $H$ such that $m_{\H}(G)=m_{\H}(H)$. Then $\Z G$ has {\normalfont SFC} if and only if $\Z H$ has {\normalfont SFC}.
\end{thm}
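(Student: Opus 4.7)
The plan is to deduce Theorem \ref{thm:jkn-B} as a direct application of the general cancellation criterion in Theorem \ref{thm:main2}. Given the quotient $H = G/N$, I would set $\bar{P} = \Z G \in LF_1(\Z G)$, which is free of rank one. The tensor product $\bar{P} \otimes_{\Z N} \Z$ is then canonically isomorphic to $\Z[G/N] = \Z H$, so $P = \Z H \in LF_1(\Z H)$. Now $\Z G$ has SFC precisely when the class $[\Z G] \in \widetilde{K}_0(\Z G)$, which is the zero class and whose fibre in $LF_1(\Z G)$ consists exactly of the stably free modules of rank one, has cancellation; the same applies to $\Z H$. Hence the statement of Theorem \ref{thm:jkn-B} is exactly the output of Theorem \ref{thm:main2} for this choice of $\bar{P}$.

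To apply Theorem \ref{thm:main2} I must verify its two hypotheses. The equality $m_{\H}(G) = m_{\H}(H)$ is assumed in the statement of Theorem \ref{thm:jkn-B}. The remaining hypothesis is surjectivity of $\Aut(P) \to K_1(\Z H)$, which here unpacks to the surjectivity of the natural map $(\Z H)^\times \to K_1(\Z H)$ because $\Aut_{\Z H}(\Z H) = (\Z H)^\times$. This is precisely the content of \cite[Theorems 7.15--7.18]{MOV83}, where $K_1(\Z H)$ is computed for every binary polyhedral group $H$ and is shown to be generated by units. This is the one place in the argument where the hypothesis that $H$ be binary polyhedral (rather than some arbitrary finite quotient) is used.

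With both hypotheses verified, Theorem \ref{thm:main2} gives that $[\Z G]$ has cancellation if and only if $[\Z H]$ has cancellation, which is the desired equivalence. I expect the main (hidden) obstacle to sit inside Theorem \ref{thm:main2} itself rather than in the present deduction: the nontrivial direction is that SFC for $\Z H$ implies SFC for $\Z G$, and this is what the Milnor square and Mayer--Vietoris argument of Section \ref{section:projective-modules} delivers, using Jacobinski cancellation on the Eichler order $\Lambda^+ = \Z G/\widehat{N}$ to compare the fibre of $LF_1(\Z G) \to \widetilde{K}_0(\Z G)$ over $[\Z G]$ with the corresponding double coset over $\Z H$. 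The forward direction, by contrast, is a direct application of Theorem \ref{thm:cancellation-closed-under-quotients}. Thus for the present statement no further work is required beyond unpacking the definitions and invoking the MOV83 computation.
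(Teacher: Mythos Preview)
Your approach is the same as the paper's---apply Theorem~\ref{thm:main2} with $\bar{P}=\Z G$ and $P=\Z H$---but there is one genuine gap. You assert that \cite[Theorems 7.15--7.18]{MOV83} establishes surjectivity of $(\Z H)^\times \to K_1(\Z H)$ for \emph{every} binary polyhedral group $H$. In fact those theorems treat only the seven groups $Q_8$, $Q_{12}$, $Q_{16}$, $Q_{20}$, $\widetilde{T}$, $\widetilde{O}$, $\widetilde{I}$, i.e.\ precisely those with $m_{\H}(H)\le 2$. For $H=Q_{4n}$ with $n\ge 6$ no such surjectivity statement is available from that source, so Theorem~\ref{thm:main2} cannot be invoked as a single black box covering both directions.

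The paper repairs this exactly where you would expect. The surjectivity hypothesis is only needed for the backward implication ($\Z H$ has SFC $\Rightarrow$ $\Z G$ has SFC). But by Swan's classification (Theorem~\ref{thm:swan-BPG}), $\Z H$ has SFC if and only if $H$ is one of those seven groups, so in proving the backward direction one may assume $H$ lies in that list and then legitimately cite \cite{MOV83}. The forward direction, as you already observed, is handled unconditionally by Theorem~\ref{thm:cancellation-closed-under-quotients} and needs no surjectivity input at all. So the fix is small: insert Theorem~\ref{thm:swan-BPG} as an intermediate step before citing \cite{MOV83}, rather than claiming the latter covers all binary polyhedral groups.
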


This is a consequence of \cite[Theorem A]{Ni21}, which itself can be obtained by specialising \cref{thm:main2} to $\bar{P}=\Z G$ and combining with the following two results.

\begin{thm}[\text{\cite[Theorems 7.15-7.18]{MOV83}}] \label{thm:MOV}
If $H = Q_8, Q_{12}, Q_{16}, Q_{20}, \widetilde{T}, \widetilde{O}, \widetilde{I}$, then the map $\Z H^\times \to K_1(\Z H)$ is surjective.	
\end{thm}

\begin{thm}[\text{\cite[Theorem I]{Sw83}}] \label{thm:swan-BPG}
If $G$ is a binary polyhedral group, then $\Z G$ has {\normalfont SFC} if and only if $G$ is of the form $Q_8, Q_{12}, Q_{16}, Q_{20}, \widetilde{T}, \widetilde{O}, \widetilde{I}$.
\end{thm}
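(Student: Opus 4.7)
The plan is to convert SFC over $\Z G$ into a double-coset triviality condition via a Milnor square, handle the seven exceptional groups by direct computation, and display stably free non-free Swan modules over the remaining quaternion groups.

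First, for any binary polyhedral $G$ I would choose a proper normal subgroup $N \trianglelefteq G$ with quotient $H = G/N$ satisfying the Eichler condition --- the abelianisation $G^{\mathrm{ab}}$ works in most cases --- and form the Milnor square
\[
\begin{tikzcd}
\Z G \arrow[r] \arrow[d] & \Lambda^+ \arrow[d] \\
\Z H \arrow[r] & (\Z/n \Z)[H]
\end{tikzcd}
\]
exactly as in Section \ref{section:projective-modules}. By the Mayer-Vietoris / Milnor-patching argument given there, specialised to the class of the free module $\Z G$, the ring $\Z G$ has SFC if and only if the double coset
\[ \Z H^\times \, \backslash \, (\Z/n\Z)[H]^\times \, / \, (\Lambda^+)^\times \]
reduces to a point, i.e.\ every unit of the finite residue ring lifts either to $\Z H^\times$ on the left or to a unit of $\Lambda^+$ on the right. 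The virtue of the choice $H = G^{\mathrm{ab}}$ is that $\Lambda^+$ then accounts for precisely the non-abelian, hence non-Eichler, Wedderburn factors of $\Q G$, which are orders in totally definite quaternion algebras.

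For the positive direction I would treat the seven groups individually. In each of $Q_8, Q_{12}, Q_{16}, Q_{20}, \widetilde{T}, \widetilde{O}, \widetilde{I}$ the order $\Lambda^+$ is an order in a totally definite quaternion algebra of small discriminant --- Hurwitz-type orders whose unit groups are finite and tabulated --- and the residue ring $(\Z/n\Z)[H]$ is likewise small. The double-coset triviality thus becomes a finite verification: one enumerates cosets in $\Z H^\times \backslash (\Z/n\Z)[H]^\times$ (using Jacobinski's Theorem \ref{thm:jacobinski} to control $\Z H^\times$ on the left) and checks that each admits a representative lifting to $(\Lambda^+)^\times$.

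For the negative direction --- $G = Q_{4n}$ with $n \geq 6$ --- the task is to exhibit a stably free non-free $\Z G$-module, and the natural candidates are the Swan modules $(r, \Sigma) \subseteq \Z G$ for $r$ coprime to $4n$. These are rank-one projectives whose class in $\widetilde{K}_0(\Z G)$ lies in the Swan subgroup $T(\Z G)$, and whose freeness is controlled via the Milnor square by whether a corresponding residue lifts simultaneously through both sides. I would construct such an $r$ for each $n \geq 6$: whenever $n$ admits a proper divisor $m \geq 6$, the failure of SFC for $\Z Q_{4n}$ follows from the failure for $\Z Q_{4m}$ via the contrapositive of Theorem \ref{thm:cancellation-closed-under-quotients}, leaving the base cases $n = 6, 7, 8, 9, 10, 11, \ldots$ (those $n \geq 6$ with no smaller admissible quaternionic quotient) to be settled by a direct construction. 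The hard part will be exactly these base cases: for each one must exhibit, inside the specific totally definite quaternion order associated with $Q_{4n}$, a residue in the finite ring $(\Z/|G|\Z)[G^{\mathrm{ab}}]^\times$ that is not realised by any unit of the order (nor killed by $\Z[G^{\mathrm{ab}}]^\times$). This is a number-theoretic calculation in quaternion orders whose discriminants grow with $n$, and it is where Swan's argument in \cite{Sw83} invests the majority of its technical effort. Once the base cases are in hand, the remainder of the theorem assembles cleanly from the Milnor-square formalism, the descent Theorem \ref{thm:cancellation-closed-under-quotients}, and Jacobinski's cancellation Theorem \ref{thm:jacobinski} on the Eichler side.
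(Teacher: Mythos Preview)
The paper does not prove this theorem at all: it is quoted verbatim as Swan's result \cite[Theorem I]{Sw83} and used as a black box. So there is no ``paper's own proof'' to compare against, and your proposal is really a sketch of how Swan's original argument goes rather than a reproduction of anything in this article.

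As a sketch of Swan's proof, your positive direction is in the right spirit: Swan does reduce the question to a finite double-coset computation via fibre squares, and the seven exceptional groups are handled by explicit unit calculations in the relevant quaternion orders (this is also what underlies the surjectivity statements of \cite[Theorems 7.15--7.18]{MOV83} cited in the paper).

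Your negative direction, however, has a conceptual gap. Swan modules $(r,\Sigma)$ are not the right candidates for witnessing failure of SFC: by definition their class in $\widetilde{K}_0(\Z G)$ lies in $T(\Z G)$, so $(r,\Sigma)$ is \emph{stably free} only when $S(r)=0$, and you never arrange this. More importantly, Swan's actual argument for $Q_{4n}$ with $n \ge 6$ does not proceed via Swan modules. Instead he works directly with the totally definite quaternion components of $\Q Q_{4n}$: following Vign\'eras \cite{Vi76}, the unit groups of maximal orders in such algebras are finite and become too small to cover the residue units once $n \ge 6$, forcing the double coset space (in the stably-free fibre) to have more than one element. Indeed the paper records the stronger consequence (Lemma \ref{thm:total-non-cancellation}): for $n \ge 7$ \emph{every} projective class over $\Z Q_{4n}$ fails cancellation, which could not be deduced from a Swan-module argument alone. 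Your reduction-to-base-cases via Theorem \ref{thm:cancellation-closed-under-quotients} is also unnecessary here, since Swan's unit-deficiency argument applies uniformly to all $n \ge 6$.
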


The latter is a special case of \cref{thm:main4} since these are the binary polyhedral groups for which $m_{\H}(G) \le 2$.

\begin{proof}[Proof of \cref{thm:main4}]
Suppose $\Z G$ has SFC. By combining Theorems \ref{thm:cancellation-closed-under-quotients} and \ref{thm:swan-BPG}, we get that $G$ has no quotient of the form $Q_{4n}$ for $n \ge 6$. By \cref{cor:quot}, this implies that $m_{\H}(G) \le 2$.

Suppose instead that $m_{\H}(G) \le 2$. By \cref{cor:quot}, we get that $G$ has a binary polyhedral quotient $H$ for which $m_{\H}(G)=m_{\H}(H) \le 2$. By \cref{thm:swan-BPG}, this implies that $\Z H$ has SFC and so $\Z G$ has SFC by \cref{thm:jkn-B}.
\end{proof}

We now turn to the proof of Theorem \ref{thm:main5}. We require the following results:

\begin{lemma}[\text{\cite[Theorem I]{Sw83}}] \label{thm:total-non-cancellation}
If $G = Q_{4n}$ for $n \ge 7$, then $[P]$ has non-cancellation for every projective $\Z G$-module $P$.
\end{lemma}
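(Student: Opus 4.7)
The plan is to follow the idele-theoretic approach of Swan \cite[p66]{Sw83}, which refines Vigneras' original cancellation-failure theorem \cite{Vi76} for quaternion algebras. The key observation is that for $G = Q_{4n}$ with $n \ge 7$, we have $m_{\H}(G) = \lfloor n/2 \rfloor \ge 3$, so $\R G$ has at least three copies of $\H$ in its Wedderburn decomposition; this will provide enough ``room'' in the idele group to obstruct cancellation for every class.

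First I would express $LF_1(\Z G)$, via the Fr\"ohlich locally-free class group formalism, as a quotient
\[ LF_1(\Z G) \cong J(\Q G)\,/\,(\Q G)^{\times} \cdot U(\widehat{\Z}G) \]
of the idele group $J(\Q G)$ by $(\Q G)^{\times}$ and the unit ideles $U(\widehat{\Z}G)$ of $\Z G$, so that the fibre of $LF_1(\Z G) \to \widetilde{K}_0(\Z G)$ over each class $[P]$ becomes a torsor under a ``defect group'' $D(\Z G)$ measuring the failure of global units in $(\Q G)^{\times}$ to cover all local units modulo reduced norms. Next I would compute $D(\Z G)$: in each quaternionic Wedderburn factor $A_i \cong M_{n_i}(D_i)$ with $D_i$ a quaternion algebra over $\Q$ ramified at infinity, the reduced norm lands in $\R^{\times}_{>0}$ at the infinite place, so by a Dirichlet-type unit argument the image of global units has rank one less than the number of such factors; thus $D(\Z G) \ne 0$ as soon as there are $\ge 3$ quaternionic factors. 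Finally I would realise each non-trivial element of $D(\Z G)$ as a concrete ``twist'' $P'$ of a given $P$ with $[P'] = [P]$ but $P' \not\cong P$.

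The main obstacle will be verifying that non-cancellation persists \emph{uniformly} across every class $[P]$, and not only in the stably free class already covered by Theorem \ref{thm:main4}. The decisive input is Vigneras' torsor argument: the defect group $D(\Z G)$ acts freely and transitively on the fibre of $LF_1(\Z G) \twoheadrightarrow \widetilde{K}_0(\Z G)$ over any class, so non-triviality of $D(\Z G)$ forces non-cancellation in every class simultaneously. The bound $n \ge 7$ (rather than $n \ge 6$) reflects the fact that for $G = Q_{24}$ there are exceptional coincidences among the unit groups of maximal orders in the quaternionic factors of $\Q G$ which can absorb the defect on certain classes, and that case has to be treated separately.
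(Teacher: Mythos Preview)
The paper does not prove this lemma; it simply records it as a result of Swan \cite[p.~66]{Sw83}, building on Vigneras \cite{Vi76}. Your proposal to supply an argument therefore goes beyond what the paper does, and your identification of the sources and of the general idelic framework is on the right track.

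The doubtful step in your outline is the assertion that every fibre of $LF_1(\Z G) \to \widetilde{K}_0(\Z G)$ is a torsor under a single fixed group $D(\Z G)$. The idelic description yields $LF_1(\Z G)$ as a \emph{double}-coset space $(\Q G)^{\times}\backslash J(\Q G)/U(\widehat{\Z G})$, not a quotient group, and the fibres over distinct classes need not be principal homogeneous spaces for one and the same group: the relevant isotropy involves $\Aut(P)=\End(P)^{\times}$, which genuinely varies with $P$ because $\Z G$ is not a maximal order. This is exactly the point that separates ``cancellation fails in the stably free class'' (which you already have from $m_{\H}(G)\ge 3$ and Theorem~\ref{thm:main4}) from ``cancellation fails in every class'', and it cannot be absorbed into a formal torsor statement without further work.

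The Dirichlet-type rank count you propose is also not the operative mechanism. In each definite quaternion factor of a maximal order $\Gamma\supset\Z G$, the obstruction is governed by the number of right-ideal classes (class numbers and type numbers in the sense of Eichler and Vigneras), not by a rank defect of a global unit group. Swan's argument passes to $\Gamma$, where the fibres of $LF_1(\Gamma)\to Cl(\Gamma)$ are uniform and computable in these terms, and then transfers non-cancellation back to $\Z G$ by analysing the conductor square between $\Z G$ and $\Gamma$; it is that transfer---not a uniform torsor principle over $\Z G$ itself---that your sketch is missing. Your remark on $Q_{24}$ is correct in spirit: the case $n=6$ is excluded precisely because the comparison with $\Gamma$ behaves differently there, but the reason is specific to that analysis rather than to ``coincidences among unit groups''.
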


\begin{lemma}[\text{\cite[Proposition 53.10]{CR90}}] \label{lemma:swan-subgroup-lifts} Let $N \le G$ be a normal subgroup. Then the induced map $- \otimes_{\Z N} \Z : T_G \to T_{G/N}$ is surjective.
\end{lemma}

The main tool in the proof of \cref{thm:main5} will be the following lemma. This is the only place in this article that we will use \cref{thm:main2} instead of \cref{thm:jkn-B}, which is strictly weaker and was already proven in \cite{Ni21}.

\begin{lemma} \label{lemma:cancellation}
Let $G$ have $4$-periodic cohomology and let $P_G$ be a projective $\Z G$ module for which $\sigma_4(G) = [P_G] \in C(\Z G)/T_G$. Suppose there exists a normal subgroup $N \le G$ for which $H = G/N$ is a binary polyhedral group, $m_{\H}(G)=m_{\H}(H) \le 2$ and $[P_G \otimes_{\Z N} \Z] \in T_H$. Then $[P_G]$ has cancellation.	
\end{lemma}

\begin{proof}
It follows from \cref{thm:main1} that the property that $[P_G]$ has cancellation is independent of the choice of representative $P_G$ for $\sigma_4(G)$. Therefore, it suffices to determine cancellation for any choice of $P_G$.

By Lemma \ref{lemma:swan-subgroup-lifts}, there exists $[P] \in T_G$ with $[P \otimes_{\Z N} \Z] = -[P_G \otimes_{\Z N} \Z] \in C(\Z H)$. By \cite[Theorem A]{Sw60a}, there exists pa rojective $\Z G$ module $P_G'$ of rank one such that $P_G \oplus P \cong P_G' \oplus \Z G^r$. This satisfies $[P_G'] = [P_G] \in C(\Z G)/T_G$ and $[P_G' \otimes_{\Z N} \Z ]= 0 \in C(\Z H)$. 
By \cref{thm:main4}, $\Z H$ has SFC and so $P_G' \otimes_{\Z N} \Z = \Z H$.

By \cref{thm:MOV}, we have that $\Aut(\Z H) = \Z H^\times \twoheadrightarrow K_1(\Z H)$. Hence the conditions of \cref{thm:main2} are satisfies and so $[P_G']$ has cancellation.
\end{proof}

It follows from results of J. A. Wolf \cite[Chapter 6]{Wo74} that, if $G$ is in (i)$'$, (iii)$'$, (iv)$'$, (v)$'$, then $G$ is a fixed-point free finite subgroups of $SO(4)$ and so is the fundamental group of a closed 3-manifold $M$. Since $C_*(\wt M) \in \Proj^4_{\Z G}(\Z,\Z;0)$, this implies that $\sigma_4(G)=0$ for these groups. This also holds for the groups in (ii)$'$ (see \cite[p236]{Jo03a} or \cite[Section 12]{Wa79b}).

\begin{lemma} \label{thm:finiteness-obstruction-computation}
If $G$ is in $\text{\normalfont (i)}'$-$\text{\normalfont (v)}'$, then $\sigma_4(G)=0 \in C(\Z G)/ T_G$. 
\end{lemma}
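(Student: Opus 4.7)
The plan is to invoke the equivalence (ii) $\Leftrightarrow$ (iv) in Theorem \ref{thm:swan-finiteness-obstruction}: to verify that $\sigma_4(G) = 0$ it is enough to exhibit a finite CW complex $X \simeq S^3$ on which $G$ acts freely. This reformulation is the right one here because the groups in (I)$'$-(IV)$'$ come to us packaged geometrically rather than via an explicit Euler class calculation.

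As recorded in the sentence immediately preceding the lemma, Wolf's classification \cite{Wo74} identifies the groups in the families (I)$'$-(IV)$'$ with the finite subgroups of $SO(4)$ that act on $S^3 \subset \R^4$ without fixed points. So for each such $G$, I would simply take the standard action of $SO(4)$ on $S^3$ and restrict it, obtaining a free orthogonal $G$-action on $S^3$.

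The one remaining step is to realise this free action on a genuinely finite CW complex. Since the $G$-action on $S^3$ is smooth and free, the quotient $M = S^3/G$ is a closed smooth $3$-manifold (a spherical space form), so it admits a finite smooth triangulation. Pulling this triangulation back along the regular covering $S^3 \to M$ yields a finite, free $G$-CW structure on $S^3$. Taking $X = S^3$ with this cell structure verifies hypothesis (iv) of Theorem \ref{thm:swan-finiteness-obstruction}, and the equivalence with (ii) in that theorem then gives $\sigma_4(G) = 0 \in \widetilde{K}_0(\Z G)/T(\Z G)$, as required.

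I do not anticipate a substantial obstacle: the whole argument is an application of Wolf's classification together with the geometric reformulation of the Swan finiteness obstruction, and the lifting of a triangulation of the space form to a finite equivariant cell structure on its universal cover is standard. The only piece one must genuinely cite rather than re-derive is the inclusion of each family (I)$'$-(IV)$'$ in the Wolf list of fixed-point-free finite subgroups of $SO(4)$.
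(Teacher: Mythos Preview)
Your proposal is correct and follows essentially the same approach as the paper. The paper's own argument is the single sentence preceding the lemma: by Wolf's classification the groups in (I)$'$--(IV)$'$ are the fixed-point-free finite subgroups of $SO(4)$, hence $3$-manifold groups, and the lemma is then stated with a bare ``Therefore''. You have simply made explicit the passage from a free smooth action on $S^3$ to a finite free $G$-CW structure via triangulation of the space form, and then invoked (ii)~$\Leftrightarrow$~(iv) of Theorem~\ref{thm:swan-finiteness-obstruction}; this is precisely the intended reasoning.
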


The following lemma is standard.
 
\begin{lemma} \label{lemma:maps-on-class-groups}
If $N \unlhd G$ and $N \le H \le G$, then there is a commutative diagram:
\[
\begin{tikzcd}
C(\Z G) \arrow[r] \arrow[d,"\text{\normalfont Res}^G_H"] & C(\Z [G/N]) \arrow[d,"\text{\normalfont Res}^{G/N}_{H/N}"] \\
C(\Z H) \arrow[r] & C(\Z [H/N])
\end{tikzcd}
\]
where the horizontal maps are induced by $- \otimes_{\Z N} \Z$.
\end{lemma}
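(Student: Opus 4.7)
My plan is to trace an arbitrary projective module around the diagram and identify the two composites as the same $\Z[H/N]$-module. The lemma is essentially naturality of base change, so I do not expect any significant obstacle.

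First I would rewrite the horizontal maps in a more symmetric form. Since $N \trianglelefteq G$, there is a canonical isomorphism $\Z[G/N] \cong \Z G \otimes_{\Z N} \Z$ of $(\Z G, \Z[G/N])$-bimodules, where $\Z$ is the trivial right $\Z N$-module; the bimodule structure on the right descends from left multiplication in $\Z G$ using normality. It follows that for any $\Z G$-module $P$ there is a natural isomorphism
\[ P \otimes_{\Z G} \Z[G/N] \;\cong\; P \otimes_{\Z N} \Z \]
of $\Z[G/N]$-modules, and the parallel statement holds for $H$. So the top and bottom horizontal arrows of the diagram send $[P]$ to $[P \otimes_{\Z N} \Z]$ and $[P|_{\Z H} \otimes_{\Z N} \Z]$ respectively, exactly as the statement of the lemma asserts.

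Next I would compare the two composites applied to a projective $\Z G$-module $P$. Both $(P \otimes_{\Z N} \Z)|_{\Z[H/N]}$ and $P|_{\Z H} \otimes_{\Z N} \Z$ have the same underlying abelian group, namely the module of coinvariants $P / I_N P$, where $I_N \subseteq \Z N$ is the augmentation ideal. This is the key point: since $N \le H \le G$, the submodule $I_N P$ is intrinsic to the $\Z N$-action on $P$ and does not depend on whether we regard $P$ as a $\Z G$- or $\Z H$-module. Moreover, the $H/N$-action on the quotient in either composite is induced by the $H$-action on $P$, which factors through $H/N$ on $P/I_N P$ by construction. Hence the two composites yield literally the same $\Z[H/N]$-module, and in particular the same class in $\widetilde{K}_0(\Z[H/N])$.

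For well-definedness I would finally observe that projectivity is preserved at every stage: base change along the surjection $\Z G \twoheadrightarrow \Z[G/N]$ sends projectives to projectives, and restriction along the inclusion $\Z[H/N] \hookrightarrow \Z[G/N]$ does too, since $\Z[G/N]$ is free as a $\Z[H/N]$-module on any set of coset representatives for $H/N$ in $G/N$; analogous remarks apply to the bottom-left corner via $\Z H \hookrightarrow \Z G$ and $\Z H \twoheadrightarrow \Z[H/N]$. Thus every arrow is a well-defined homomorphism on $\widetilde{K}_0$, and commutativity is immediate from the natural identification above.
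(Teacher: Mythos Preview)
Your argument is correct. The paper actually states this lemma without proof, treating it as a routine fact about base change; your write-up supplies exactly the expected verification, identifying both composites with the coinvariants $P/I_N P$ equipped with its induced $H/N$-action, and checking that each arrow preserves projectives.
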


\begin{proof}[Proof of \cref{thm:main5}]
As in the proof of \cref{lemma:cancellation}, it will suffice to prove that $[P_G]$ has cancellation or non-cancellation where $P_G$ is any representative for $\sigma_4(G)$.

To prove the forward direction, suppose $m_{\H}(G) \ge 3$. By \cref{cor:mh>3}, we know that $G$ either has a quotient of the form $Q_{4n}$ for $n \ge 7$ or $G \cong Q_{24} \times C_n$ where $(n,24)=1$.
If $G$ has a quotient $Q_{4n}=G/N$ for $n \ge 7$ and $P = P_G \otimes_{\Z N} \Z$, then $[P]$ has non-cancellation by Theorem \ref{thm:total-non-cancellation} and so $[P_G]$ has non-cancellation also by Theorem \ref{thm:cancellation-closed-under-quotients}. 
If $G = Q_{24} \times C_n$, then $\sigma_4(G)=0$ by \cref{thm:finiteness-obstruction-computation} and so we can take $[P_G] = [\Z G]$. Note that $[\Z Q_{24}]$ non-cancellation by Theorem \ref{thm:swan-BPG} and so $[\Z G]$ has non-cancellation by \cref{thm:cancellation-closed-under-quotients}.

For the converse, suppose $m_{\H}(G) \le 2$. If $\sigma_4(G)=0$, then we can take $[P_G] = [\Z G]$ which has cancellation by Theorem \ref{thm:main4}.
If $\sigma_4(G) \ne 0$, then Lemma \ref{thm:finiteness-obstruction-computation} implies that $G$ is one of the groups in (vi) or (vii). 
If $G$ is in (vii), then $G = Q(16;m,n) \times C_r$ for $m, n$ odd coprime and $(r,16mn)=1$. By Theorem \ref{thm:4-periodic-list}, the quotient $Q_{16}=G/N$ has $m_{\H}(G)=m_{\H}(Q_{16})=2$.
It follows from \cite[Theorems III, VI]{Sw83} that $C(\Z Q_{16}) = T_{Q_{16}}$ and so $[P_G \otimes_{\Z N} \Z] \in T_{Q_{16}}$ for all projective $\Z G$ modules $P_G$ for which $\sigma_4(G) = [P_G] \in C(\Z G)/T_G$. Hence $[P_G]$ has cancellation by Lemma \ref{lemma:cancellation}.

If $G$ is in (vi), then $G = P_{48n}'' \times C_m$ for $n \ge 3$ odd and $m \ge 1$ with $(48n,m)=1$.
Theorem \ref{thm:4-periodic-list} implies that the quotient $\widetilde{O} = G/N$ has $m_{\H}(G) = m_{\H}(\widetilde{O})=2$ where $N = C_n \times C_m$. 
It is proven in \cite[Theorem 14.1]{Sw83} that the restriction map
$\text{\normalfont Res} : C(\Z \widetilde{O}) \to C(\Z Q_{16}) \oplus C(\Z Q_{12})$ is bijective. This restricts to an injection $\Res \mid_{T_{\widetilde{O}}}: T_{\widetilde{O}} \to T_{Q_{16}} \oplus T_{Q_{12}}$ which is necessarily bijective since $T_{\widetilde{O}} \cong \Z/2$, $T_{Q_{16}} \cong \Z/2$ and $T_{Q_{12}} = 0$ \cite[Theorem VI]{Sw83}.
Since $C(\Z Q_{16}) = T_{Q_{16}}$, a class $[P] \in C(\Z \widetilde{O})$ is contained in $T_{\widetilde{O}}$ if and only if $\Res^{\widetilde{O}}_{Q_{12}}(P) = 0 \in C(\Z Q_{12})$.

Note that $G$ has a (hyperelementary) subgroup $H=Q_{12n} \times C_m$ which contains $N$ and has $Q_{12}=H/N$. By Lemma \ref{lemma:maps-on-class-groups}, we now have a commutative diagram:
\[
\begin{tikzcd}
C(\Z G) \arrow[r] \arrow[d,"\text{\normalfont Res}^G_H"] & C(\Z \widetilde{O}) \arrow[d,"\text{\normalfont Res}^{\widetilde{O}}_{Q_{12}}"] \\
C(\Z H) \arrow[r] & C(\Z Q_{12}) 
\end{tikzcd}
\]
Let $P_G$ be such that $\sigma_4(G) = [P_G] \in C(\Z G)/T_G$. If $P_H = \Res^G_H(P_G)$, then it is straightforward to see that $\sigma_4(H) = [P_H] \in C(\Z H)/T_H$.
Since $H$ is in (iii), \cref{thm:finiteness-obstruction-computation} implies that $\sigma_4(H)=0$ and so $[P_H] \in T_H$. This implies that $[P_H \otimes_{\Z N} \Z] =0$ since it is contained in $T_{Q_{12}}=0$. 
Hence $\Res^G_H(P_G) \otimes_{\Z N} \Z = 0 \in C(\Z Q_{12})$ and so, as the diagram above commutes, we have $\Res^{\widetilde{O}}_{Q_{12}}(P_G \otimes_{\Z N} \Z) = 0 \in C(\Z Q_{12})$. Hence $[P_G \otimes_{\Z N} \Z] \in T_{\widetilde{O}}$ which implies that $[P_G]$ has cancellation by Lemma \ref{lemma:cancellation}.
\end{proof}

\begin{proof}[Proof of \cref{thm:D2}]
Since $\D(G)$ is connected, finite D2 complexes $X$ with $\pi_1(X) = G$ are determined up to homotopy by $\chi(X)$ if and only if $\D(G)$ has cancellation. 

Let $P_G$ be such that $\sigma_4(G) = [P_G] \in C(\Z G)/T_G$. By combining Theorems \ref{thm:main0} and \ref{thm:main1}, we get that there is an isomorphism of graded trees
\[ \Psi \circ \wt C_* : \D(G) \to [P_G] \]
and so $\D(G)$ has cancellation if and only if $[P_G]$ has cancellation. By \cref{thm:main5}, $[P_G]$ has cancellation if and only if $m_{\H}(G) \le 2$, which completes the proof.
\end{proof}

\section{Balanced presentations and the D2 problem} \label{section:balanced-presentations}

We will now show how the following can be deduced from \cref{thm:D2}.

\begingroup
\renewcommand\thethm{\ref{thm:D2-2}}
\begin{cor} 
Let $G$ have $4$-periodic cohomology. Then:
\begin{enumerate}[\normalfont(i)]
\item If $G$ has the {\normalfont D2} property, then $G$ has a balanced presentation.
\item If $G$ has a balanced presentation and $m_{\H}(G) \le 2$, then $G$ has the {\normalfont D2} property.
\end{enumerate}
\end{cor}
\endgroup

\begin{proof}
Recall that, as discussed at the end of \cref{section:PHT}, $G$ has the D2 property if and only if the induced map
	$\Psi \circ \wt C_* : PHT(G,2) \to [P_G]$
	is bijective. 
	
Suppose $G$ has the D2 property. Then $\Psi \circ \wt C_*$ is bijective and, since $[P_G]$ contains a projective $\Z G$ module of rank one, there must exist $(X,\rho) \in PHT(G,2)$ such that $\chi(X) = 1$. If $\mathcal{P}$ is a presentation such that $X \simeq X_{\mathcal{P}}$, then $\Def(\mathcal{P}) = 1 - \chi(X) = 0$ and so $\mathcal{P}$ is a balanced presentation as required.
	
Suppose instead that $m_{\H}(G) \le 2$ and that $G$ has a balanced presentation $\mathcal{P}$. By \cref{thm:D2}, $[P_G]$ has cancellation and so $[P_G] = \{ P_0 \oplus \Z G^r : r \ge 0\}$ for some projective $P_0$ of rank one. Let $P_1 = \Psi(\wt C_*(X_{\mathcal{P}}))$, which has $\rank(P_1)=1$ since $\chi(X_{\mathcal{P}})=1$. Since $P_1 \in [P]$, this implies that $P_1 \cong P_0$. In particular, for all $r \ge 0$, we have $\Psi(\wt C_*(X_{\mathcal{P}} \vee r S^2)) \cong P_0 \oplus \Z G^r$ and so $\Psi \circ \wt C_*$ is surjective. Since $\Psi \circ \wt C_*$ is injective, this implies that it is bijective and so $G$ has the D2 property.
\end{proof}

\begingroup
\renewcommand\thethm{\ref{thm:poincare}}
\begin{cor}
Let $X$ be a finite Poincar\'{e} $3$-complex with $G=\pi_1(X)$ finite. Then:
\begin{enumerate}[\normalfont(i)]
\item If $X$ has a cell structure with a single $3$-cell, \hspace{-0.5mm}then $G$ \hspace{-0.5mm}has a balanced presentation.
\item If $G$ has a balanced presentation and $m_{\H}(G) \le 2$, then $X$ has a cell structure with a single $3$-cell.
\end{enumerate}
\end{cor}
\endgroup
\setcounter{thm}{0} 

\begin{proof}
Let $X$ be a finite Poincar\'{e} 3-complex with $G=\pi_1(X)$ finite. By \cref{cor:pi_1-of-poinc}, $G$ has 4-periodic cohomology.

Suppose $G$ has a cell structure with a single 3-cell $e^3$. Then $X = X^{(2)} \cup e^3$, where the 2-skeleton $X^{(2)}$ is a finite 2-complex. It follows from Poincar\'{e} duality that $\chi(X) = 0$ and so $\chi(X^{(2)})=1$. If $\mathcal{P}$ is a presentation such that $X^{(2)} \simeq X_{\mathcal{P}}$, then $\Def(\mathcal{P}) = 1 - \chi(X^{(2)}) = 0$ and so $\mathcal{P}$ is a balanced presentation as required.

Suppose instead that $m_{\H}(G) \le 2$ and that $G$ has a balanced presentation. By \cref{thm:D2-2}, $G$ has the D2 property. By \cite[Theorem 2.4]{Wa67}, there exists a finite D2 complex $K$, a map $\alpha:S^2 \to K$ and a homotopy equivalence $X  \to  K \cup_{\alpha} e^3$. 
Since $G$ has the D2 property, there is a homotopy equivalence $h : K \to K_0$ for some finite 2-complex $K_0$. By combining these maps, we obtain a homotopy equivalence $X \to K_0 \cup_{h \circ \alpha} e^3$ which gives a cell structure for $X$ with a single 3-cell as required.
\end{proof}

For the remainder of this section, we will discuss the implications of Corollaries \ref{thm:D2-2} and \ref{thm:poincare} on the D2 problem over groups with $4$-periodic cohomology.

\subsection{Balanced presentations for groups with periodic cohomology}

If $G$ has periodic cohomology, then $H_2(G;\Z)=0$ (see, for example, \cite{Sw71}). In particular, by \cref{thm:D2-2}, the groups $G$ with $4$-periodic cohomology are either counterexamples to the D2 problem or give a new supply of groups with efficient presentations. 

This gives some response to comments made by L. G. Kov\'{a}cs \cite[p212]{Ko94} and J. Harlander \cite[p167]{Har98} on the scarcity of efficient finite groups. In contrast, there is currently no example known to contradict the following conjecture:

\begin{conj}
If $G$ has periodic cohomology, then $G$ has a balanced presentation.	
\end{conj}

By \cite[Chapter 6]{Wo74}, the groups in (i)$'$, (iii)$'$, (iv)$'$, (v)$'$ are all finite 3-manifold groups which are well-known to have balanced presentations. This follows, for example, from \cref{thm:poincare} since 3-manifolds have cell structures with a single 3-cell. The groups in (ii)$'$ also have balanced presentations (see \cite[p236]{Jo03a}).

\begin{prop} \label{prop:3-manifold-group}
If $G$ is in {\normalfont (i)$'$-(v)$'$}, then $G$ has a balanced presentation.	
\end{prop}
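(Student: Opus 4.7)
The plan is to produce a CW structure on a closed 3-manifold $M$ with $\pi_1(M)=G$ whose 2-skeleton is the Cayley complex of a balanced presentation of $G$. First I would choose a self-indexing Morse function $f\colon M\to \R$, or equivalently a handle decomposition of $M$. After cancelling handles in pairs so that there is a single 0-handle and (by Poincar\'e duality for the handle decomposition, dualising the unique 0-handle) a single 3-handle, the resulting CW model of $M$ has the form
\[
M \simeq \{*\} \cup \bigl(\textstyle\bigcup_{i=1}^{n} e_i^1\bigr) \cup \bigl(\textstyle\bigcup_{j=1}^{m} e_j^2\bigr) \cup e^3.
\]

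Let $X=M^{(2)}$ be the $2$-skeleton of this structure. Since $M$ is a closed odd-dimensional manifold, $\chi(M)=0$, and from $M=X\cup e^3$ we get $\chi(X)=\chi(M)-(-1)^3=1$. On the other hand, because $X$ has a single $0$-cell, $n$ one-cells and $m$ two-cells, it is homotopy equivalent to the Cayley complex $X_{\mathcal{P}}$ of a presentation $\mathcal{P}=\langle s_1,\dots,s_n\mid r_1,\dots,r_m\rangle$ of $\pi_1(M)=G$, and $\chi(X_{\mathcal{P}})=1-n+m$. Combining, $\Def(\mathcal{P})=n-m=0$, so $\mathcal{P}$ is balanced, which is what we want.

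The only potentially non-routine step is the Morse-theoretic reduction to a single $0$-handle and a single $3$-handle; once that is in place, the Euler-characteristic computation forces the presentation to be balanced. This reduction is standard: for the unique 0-handle, cancel excess 0-handles against 1-handles using any path in $M$ connecting their centres, and the single 3-handle is obtained symmetrically (by turning the Morse function upside down and reading off the dual handle decomposition, in which 3-handles become 0-handles). Since connectedness of $M$ makes the 0-/1-handle cancellations unobstructed and the dual argument handles the top dimension, one always arrives at a CW model of the form above, and the proof is complete.
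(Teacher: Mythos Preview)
Your proof is correct and follows essentially the same route as the paper: use Morse theory/handle decompositions to obtain a CW structure on $M$ with a single top cell, observe that $\chi(M)=0$ forces $\chi(X^{(2)})=1$, and read off a balanced presentation. You are simply more explicit than the paper about also arranging a single $0$-handle (which is needed to identify $X^{(2)}$ directly with a presentation complex rather than appealing, as the paper implicitly does, to the fact that any finite $2$-complex is homotopy equivalent to some $X_{\mathcal P}$).
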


We have not been able to find balanced presentations for any of the groups in (vi), but have succeeded for the following groups in (vii). These groups overlap in the case where $k=n=1$ with the groups in \cite[Theorem 3.1]{Ne89}.

\begin{prop} \label{prop:D2-prop}
If $n \ge 3$ and $a, b, k \ge 1$ odd coprime, then: 
\[Q(2^na;b,1) \times C_k \cong \langle x,y \mid y^kx^b y^k = x^b, xyx=y^{2^{n-2}a-1} \rangle.\]\end{prop}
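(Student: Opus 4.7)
The strategy has two stages: first, verify the presentation defines a group isomorphic to $G := Q(2^n a; b, 1) \times C_k$; second, observe the presentation is balanced and apply Theorem \ref{thm:D2-2}(ii) to obtain the D2 property for $Q(16;n,1) \times C_k$ after confirming $m_{\H} \le 2$ via Theorem \ref{thm:4-periodic-list}(VI).

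For stage one, let $\Gamma$ denote the presented group and write $G = (C_a \times C_b) \rtimes Q_{2^n} \times C_k$, with $Q_{2^n} = \langle \tilde x, \tilde y \rangle$, $C_a = \langle p \rangle$, $C_b = \langle q \rangle$, $C_k = \langle s \rangle$, and action as in (VI). I would first define a homomorphism $\phi : \Gamma \to G$ by sending $x$ to $\tilde x$ (possibly twisted by an element of $C_b$) and $y$ to an element built from $\tilde y$, $q$, and $s$ (and if needed $p$), with exponents chosen via the Chinese Remainder Theorem so that both relations hold. The exponent $m := 2^{n-2}a - 1$ satisfies $m^2 - 1 = 2^{n-1}a(2^{n-3}a - 1)$, hence $m^2 \equiv 1 \pmod{2^{n-1} a}$, which matches the order-two quaternionic inversion action of $\tilde x$ on the relevant cyclic factors of $\phi(y)$. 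The first relation $y^k x^b y^k = x^b$, which says $x^b$ inverts $y^k$, follows because $s$ is central (so the $C_k$-factor is unaffected) and $\tilde x^b$ inverts $p$ and $q$. Both relations are then checked by direct computation in $G$.

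Surjectivity of $\phi$ follows from pairwise coprimality of $a, b, k$ (and coprimality to $2^n$): suitable powers and products of $\phi(x)$ and $\phi(y)$ recover each of $\tilde x, \tilde y, p, q, s$. For the reverse inequality $|\Gamma| \le |G| = 2^n abk$, I would use the second relation rewritten as $xy = y^m x^{-1}$ to push all $x$'s in an arbitrary word to the right, reducing to a normal form $y^i x^j$. Iterated conjugation identities such as $x^j y x^{-j}$, combined with the first relation ($x^b$ inverts $y^k$), then bound the ranges of $i$ and $j$. The pattern is illustrated by the base case $n = 3, a = 1, b = 3, k = 1$: the relations $yx^3 y = x^3$ and $xyx = y$ immediately give $y^2 = x^6$ and $yxy^{-1} = x^{-1}$, and iterating $x^j y x^{-j} = yx^{-2j}$ yields $x^6 y = yx^{-6} = y \cdot (y^2)^{-1} = y^{-1}$, so $y^3 = y^{-1}$, forcing $y^4 = 1$ and therefore $x^{12} = 1$. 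This recovers exactly $Q_{24} \cong Q(8;3,1)$ of order $24$. An analogous (but more intricate) derivation in general yields $|\Gamma| \le |G|$.

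For stage two, the presentation has two generators and two relators, so deficiency zero, hence is balanced. In the specialisation $G = Q(16;n,1) \times C_k$ (corresponding to $a = 1$, original $n$ equal to $4$, and $b = n$ odd $\ge 3$ coprime to $16k$), Theorem \ref{thm:4-periodic-list}(VI) gives $m_{\H}(G) = m_{\H}(Q(16;n,1)) = 2 \le 2$. Since $G$ has $4$-periodic cohomology and now admits the balanced presentation just established, Theorem \ref{thm:D2-2}(ii) yields the D2 property. The main obstacle is the order-bound computation in stage one: tracking how the exponent $m = 2^{n-2}a - 1$ interacts under repeated conjugation with both the $2$-primary part of $|x|$ and the $abk$-part of $|y|$, and checking that no extra collapse occurs beyond what is forced in $G$. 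A Tietze-transformation argument, or a Reidemeister--Schreier computation on the kernel of the natural projection $\Gamma \to Q_{2^n}$ (which should be cyclic of order $abk$), is likely the cleanest route.
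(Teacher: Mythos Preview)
The paper does not actually supply a proof of this proposition: it is stated without argument, with only the remark that the $k=n=1$ cases overlap with Neumann \cite[Theorem 3.1]{Ne89}. So there is nothing to compare against for stage one, and your plan --- exhibit a surjection $\phi:\Gamma\to G$, then bound $|\Gamma|$ from above --- is exactly the standard route for verifying such a presentation. Your worked base case $(n,a,b,k)=(3,1,3,1)$ is correct and illustrates the mechanism well.

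One small correction to your sketch of $\phi$: in your own base-case computation you derive $y^2=x^6$ and $yxy^{-1}=x^{-1}$, so $x$ has order $12=4b$ and $y$ has order $4=2^{n-1}$. This means $x$ should land on an element built from the ``$\tilde y$''-type generator of $Q_{2^n}$ together with the $C_b$-factor (since $\tilde y$ centralises $q$), while $y$ should land on the ``$\tilde x$''-type generator together with contributions from $C_a$ and $C_k$. Your description has these roles interchanged. This does not affect the soundness of the strategy, only the bookkeeping.

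Your stage two is exactly what the paper intends and is complete as written: the presentation is balanced, $Q(16;n,1)\times C_k$ lies in family (VI) of Theorem \ref{thm:4-periodic-list} so $m_{\H}=2$, and Theorem \ref{thm:D2-2}(ii) then gives the D2 property.
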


By combining Propositions \ref{prop:3-manifold-group} and \ref{prop:D2-prop} with \cref{thm:D2-2}, we obtain:

\begin{thm} \label{thm:D2-property}
Suppose $G$ is in $\text{\normalfont(i)}$-$\text{\normalfont(v)}$ or has the form $Q(16;n,1) \times C_k$ for some $n, k \ge 1$ odd coprime. Then $G$ has the {\normalfont D2} property.
\end{thm}

Note that not all of these groups have free period $4$; an example is $Q(16;3,1)$ \cite{Da82}.
The simplest groups that we have not been able to find balanced presentations for are $P_{48 \cdot 3}''$ and $Q(16;3,5)$. The following is therefore of practical interest: 

\begin{question}
Do $P_{48 \cdot 3}''$ or $Q(16;3,5)$ have balanced presentations?	
\end{question}

These correspond to the groups $G_{144}^{31}$, $G_{240}^{22}$ in GAP's the Small Groups Library.

\subsection{Potential counterexamples to the D2 problem}  Recall that a $\Z G$ module $M$ has the \textit{Swan property} (or is a `Swan module') if 
\[ d_{\Z G}(M) = \max_{p \mid |G|} d_{\Z_p G}(M \otimes \Z_p)\] 
where $d_R(\,\cdot\,)$ is the number of $R$-module generators.

In 1977, J. M. Cohen proposed the group $Q_{32}$ as a counterexample to the D2 problem \cite[Section 4]{Co77}.
More generally, he conjectured the following.

\begin{conj}[\text{\cite[Problem D3]{Wa79a}}] \label{conj:cohen}
Let $X$ be a finite {\normalfont D2} complex. Then $X$ is homotopy equivalent to a finite $2$-complex if and only if $\pi_2(X)$ has the Swan property or $\pi_1(X)$ is infinite.
\end{conj}

The following was proved by Cohen as a consequence of \cite[Theorem 3]{Co78}, where $I$ is the augmentation ideal.
This gives another reason why the D2 property for groups with 4-periodic cohomology is of particular interest.

\begin{prop}[\text{\cite[p415]{Co77}}] \label{prop:cohen}
Let $X$ be a finite {\normalfont D2} complex such that $\pi_2(X)$ does not have the Swan property and $G = \pi_1(X)$ is finite. Then:
\begin{enumerate}[\normalfont (i)]
\item $G$ has free period $4$.
\item $\chi(X) = 1$.
\item $\pi_2(X)$ is non-cyclic, i.e. $\pi_2(X) \not \cong I^*$.
\end{enumerate}
\end{prop}

We will now show the following as an application of \cref{thm:D2}. Recall that $\Z G$ modules $A$ and $B$ are \textit{$\Aut(G)$-isomorphic} if there exists a bijection $\varphi : A \to B$ such that, for some $\theta \in \Aut(G)$, we have $\varphi(g \cdot x) = \theta(x) \cdot \varphi(x)$ for all $x \in A$. 

\begin{cor}
Suppose the ``if" part of \cref{conj:cohen} holds. If $G$ does not have the {\normalfont D2} property, then $G$ has free period $4$ and $m_{\H}(G) \ge 3$.	
\end{cor}

\begin{proof}
 By \cref{prop:cohen}, $G$ has free period $4$ and there exists a finite D2 complex $X$ with $\pi_1(X) = G$, $\chi(X) = 1$ and $\pi_2(X) \not \cong I^*$.
Recall that, in \cref{thm:mu2=1}, we constructed a finite D2 complex $X_G$ with $\pi_1(X_G) = G$ and $\chi(X_G) = 1$ whenever $G$ has 4-periodic cohomology.
Since $G$ has free period $4$, we can take $J = I^*$ in the proof of \cref{thm:mu2=1} which gives that $\pi_2(X_G) \cong I^*$. In particular, $X \vee rS^2 \simeq X_G \vee rS^2$ for some $r \ge 1$. If $X \simeq X_G$, then $\pi_2(X)$ and $\pi_2(X_G)$ would be $\Aut(G)$-isomorphic which is a contradiction since $\pi_2(X)$ is non-cyclic and $\pi_2(X_G) \cong I^* \cong \Z G/ \Sigma$. Hence $\D(G)$ has non-cancellation and so $m_{\H}(G) \ge 3$ by \cref{thm:D2}. 
\end{proof}

We say that two presentations $\mathcal{P}$ and $\mathcal{Q}$ for a group $G$ are \textit{exotic} if $X_{\mathcal{P}} \not \simeq X_{\mathcal{Q}}$ and $\Def(\mathcal{P}) = \Def(\mathcal{Q})$ or, equivalently, if $X_{\mathcal{P}} \vee rS^2 \simeq X_{\mathcal{Q}} \vee rS^2$ for some $r \ge 0$.
The following was shown recently by Mannan and Popiel \cite{MP21}. This is, to date, the only known example of an exotic presentation for a finite non-abelian group.

\begin{thm}[\text{\cite[Theorem A]{MP21}}] \label{thm:mannan-popiel}
The quaternion group $Q_{28}$ has presentations
\[ \PP_1= \langle x,y \mid x^7=y^2, xyx=y \rangle, \quad \PP_2= \langle x,\mid x^7=y^2, y^{-1}xyx^2=x^3y^{-1}x^2y \rangle\]
such that $\pi_2(X_{\PP_1}) \not \cong \pi_2(X_{\PP_2})$ are not $\Aut(Q_{28})$-isomorphic. Hence $X_{\PP_1} \not \simeq X_{\PP_2}$.
\end{thm}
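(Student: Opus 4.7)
The plan is to distinguish the two relation modules $M_i := \pi_2(X_{\PP_i})$ as $\Z Q_{28}$-modules by an explicit Fox-calculus computation combined with an invariant on the stable class $\Omega_3(\Z)$ which detects the failure of cancellation over $\Z Q_{28}$. Since $Q_{28}$ lies in family (II)$'$, it is a $3$-manifold group with free period $4$ by Lemma \ref{thm:finiteness-obstruction-computation}, and by Theorem \ref{thm:swan-finiteness-obstruction} one may choose a generator $g \in H^4(Q_{28};\Z)$ with $\chi(g) = 0$. Since $\PP_1$ and $\PP_2$ are balanced, each $M_i$ is a rank-one (i.e.\ minimal) element of $\Omega_3(\Z)$, and by the duality $\Omega_3(\Z) \cong \Omega_1^0(\Z)^*$ of Lemma \ref{lemma:duality-chain}, isomorphism classes of minimal elements correspond to isomorphism classes of minimal elements in $\Omega_1^0(\Z)$. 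Since $m_{\H}(Q_{28}) = 3 > 2$, Theorem \ref{thm:main5} guarantees that $SF_1(\Z Q_{28})$ contains non-free elements, so there are multiple such classes available; the task is to witness $M_1 \not\cong M_2$ explicitly for these two presentations.

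First I would compute the boundary matrix $A_i \in M_2(\Z Q_{28})$ of $\partial_2^{(i)} : (\Z Q_{28})^2 \to (\Z Q_{28})^2$ from the Fox derivatives of the relators of each $\PP_i$, so that $M_i = \ker(A_i)$ is presented as a submodule of $(\Z Q_{28})^2$. Dualising $A_i$ and extracting a unimodular row produces $\mathbf{v}_i \in \mathrm{Um}_2(\Z Q_{28})$ such that the associated rank-one stably free module $P(\mathbf{v}_i) = \ker(\mathbf{v}_i \cdot - : (\Z Q_{28})^2 \to \Z Q_{28})$ fits in $0 \to M_i^* \to P(\mathbf{v}_i) \to \Z \to 0$, giving a concrete handle on $M_i^*$ (and hence on $M_i$).

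The technical heart is to exhibit an invariant separating the $\Z Q_{28}$-isomorphism classes of $M_1$ and $M_2$. Embedding $\Z Q_{28}$ into a maximal order $\Lambda \subseteq \Q Q_{28}$ and using the Wedderburn decomposition $\R Q_{28} \cong \R^4 \times \H^3$, Swan's analysis in \cite{Sw83} describes $SF_1(\Z Q_{4n})$ via the reduced norm $\mathrm{Nrd} : \GL_2(\Z Q_{28}) \to \prod_j Z(\Lambda_j)^\times$ on the three quaternionic summands, modulo the image of $\Z Q_{28}^\times$. I would compute the classes of $\mathbf{v}_1$ and $\mathbf{v}_2$ under this invariant and show they differ, which by Lemma \ref{lemma:aug-seq} distinguishes the isomorphism classes of $P(\mathbf{v}_1)$ and $P(\mathbf{v}_2)$ and hence of $M_1^*$ and $M_2^*$. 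In practice the computation would be organised via the Milnor pullback square for $\Z Q_{28}$ with respect to the norm element $\Sigma = \sum_{g \in Q_{28}} g$, reducing the comparison to a finite calculation in $(\Z/28)[Q_{28}]$ and in the quaternionic components of $\Lambda$.

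The main obstacle is controlling the image of $\Z Q_{28}^\times$ inside $\prod_j Z(\Lambda_j)^\times$ under reduced norms on the quaternionic summands: this is exactly the arithmetic subtlety that causes SFC to fail for $\Z Q_{28}$ and makes explicit non-cancellation witnesses delicate. The ``in particular'' statement then follows: any homotopy equivalence $X_{\PP_1} \simeq X_{\PP_2}$ induces a $\Z Q_{28}$-module isomorphism between the $\pi_2$'s once a polarisation is fixed, and one checks (using the $\Aut(Q_{28})$-equivariance of the reduced-norm invariant) that $M_1$ and $M_2$ lie in distinct $\Aut(Q_{28})$-orbits as well, so that no adjustment by a $\pi_1$-automorphism can identify them.
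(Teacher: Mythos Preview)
The paper does not prove this statement: it is quoted verbatim as a result of Mannan and Popiel \cite{MP19} and used as a black box to deduce Theorem \ref{thm:q28}. There is therefore no proof in the paper to compare your proposal against beyond the citation.

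That said, your outline is in the right spirit and is close to what Mannan--Popiel actually do. Their execution, however, is more targeted than the general reduced-norm picture you sketch: rather than controlling the image of $\Z Q_{28}^\times$ inside the product of centres of all three quaternionic Wedderburn components (which, as you correctly flag, is the delicate arithmetic point), they pass to a single explicit quaternionic quotient of $\Z Q_{28}$---the order generated by a primitive $7$th root of unity $\zeta$ and $j$ with $j^2=-1$ and $j\zeta j^{-1}=\zeta^{-1}$---and show by direct computation that the image of the stably free module coming from $\PP_1$ is free over this quotient while that from $\PP_2$ is not. This buys them exactly the avoidance of the obstacle you name: one component already separates the two classes, so no global unit computation is needed.

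Your proposal is a plan rather than a proof: the phrases ``compute the classes \ldots and show they differ'' and ``a finite calculation in $(\Z/28)[Q_{28}]$'' are placeholders for the entire substance of the argument. You are also right that the ``in particular'' clause requires checking that $M_1$ and $M_2$ lie in distinct $\Aut(Q_{28})$-orbits, not merely distinct isomorphism classes; Mannan--Popiel handle this because their invariant (freeness over the chosen quotient) is visibly $\Aut(Q_{28})$-stable.
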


We now proceed to point out the following two consequences.

\begin{thm} \label{thm:cohen=false}
The ``only if" part of \cref{conj:cohen} is false.
\end{thm}

\begin{proof}
It is noted after the proof of \cite[Theorem A]{MP21} that $d_{\Z Q_{28}}(\pi_2(X_{\PP_1})) = 1$ and $d_{\Z Q_{28}}(\pi_2(X_{\PP_1})) \ne 1$. However, $\pi_2(X_{\PP_1}) \oplus \Z G^r \cong \pi_2(X_{\PP_2}) \oplus \Z G^r$ for some $r \ge 0$ and it follows that $\pi_2(X_{\PP_1}) \otimes \Z_p \cong \pi_2(X_{\PP_2}) \otimes \Z_p$ for $p \mid |G|$ prime since $\Z_p G$ is semisimple by Maschke's theorem.
This implies that $d_{\Z_p G}(\pi_2(X_{\PP_2}) \otimes \Z_p)=1$ for all $p \mid |G|$ and so $\pi_2(X_{\PP_2})$ does not have the Swan property. This contradicts the ``only if" direction of \cref{conj:cohen} since $X_{\mathcal{P}_2}$ is a finite 2-complex.	
\end{proof}

\begin{thm} \label{thm:q28}
$Q_{28}$ has the {\normalfont D2} property and $m_{\H}(Q_{28})=3$.
\end{thm}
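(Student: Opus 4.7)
The plan is to combine the tree-theoretic description of $\D_{Q_{28}}$ given by Theorems \ref{thm:main0} and \ref{thm:main1} with Swan's computation of stably free modules over $\Z Q_{28}$ and the recent realisation result of Mannan--Popiel.

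First I would compute $m_{\H}(Q_{28})$: writing $Q_{28} = Q_{4 \cdot 7}$ and using the formula $m_{\H}(Q_{4n}) = \lfloor n/2 \rfloor$ recalled after Theorem \ref{thm:4-periodic-list}, we get $m_{\H}(Q_{28}) = 3$. In particular Theorem \ref{thm:D2} does not apply directly, so we cannot hope to detect polarised homotopy type of a minimal D2 complex over $Q_{28}$ by Euler characteristic alone.

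Next I would describe the tree $\D_{Q_{28}}$ explicitly. The group $Q_{28}$ lies in family (II) and, as a $3$-manifold group, has free period $4$ by Lemma \ref{thm:finiteness-obstruction-computation}. By Theorem \ref{thm:swan-finiteness}, we may therefore choose a generator $g \in H^4(Q_{28};\Z)$ with $\chi(g) = [\Z Q_{28}] = 0 \in \widetilde{K}_0(\Z Q_{28})$. Theorems \ref{thm:main0} and \ref{thm:main1} then combine to give an isomorphism of graded trees
\[ \D_{Q_{28}} \;\cong\; [\Z Q_{28}]. \]
So classifying polarised homotopy types of D2 complexes over $Q_{28}$ is the same as classifying the stably free $\Z Q_{28}$-modules. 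Bass cancellation makes the stably free modules of rank $\ge 2$ free, and Swan's computation \cite[Theorem III]{Sw83} shows there are exactly two isomorphism classes of rank-one stably free modules. Thus $\D_{Q_{28}}$ is a fork with precisely two minimal vertices (both of Euler characteristic $1$) and a single vertex at every higher grade.

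It remains to exhibit a finite $2$-complex in each polarised homotopy class. At the two minimal vertices, Theorem \ref{thm:mannan-popiel} of Mannan--Popiel supplies balanced presentations $\PP_1, \PP_2$ whose Cayley complexes $X_{\PP_1}$ and $X_{\PP_2}$ are not homotopy equivalent. Under the isomorphism $\D_{Q_{28}} \cong [\Z Q_{28}]$ these two finite $2$-complexes must therefore correspond to the two distinct minimal vertices, since collapsing them to a single vertex would force $X_{\PP_1} \simeq X_{\PP_2}$. Every higher vertex is represented by $X_{\PP_i} \vee n S^2$ for some $i$ and some $n \ge 1$, and this is again a finite $2$-complex. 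Hence every vertex of $\D_{Q_{28}}$ is realised by a finite $2$-complex, giving the D2 property for $Q_{28}$. The only genuinely non-routine input is the exotic presentation $\PP_2$ of Mannan--Popiel; once this is available the module-theoretic framework of earlier sections closes the argument essentially automatically.
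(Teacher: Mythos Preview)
Your proposal is correct and follows essentially the same approach as the paper: identify $\D_{Q_{28}}$ with $[\Z Q_{28}]$ via Theorems \ref{thm:main0} and \ref{thm:main1} using free period $4$, invoke Swan's count of exactly two rank-one stably free $\Z Q_{28}$-modules to see the tree is a two-pronged fork, and then realise both minimal vertices by the Mannan--Popiel presentations. Your write-up is slightly more explicit in places (citing Bass cancellation and Lemma \ref{thm:finiteness-obstruction-computation}), but the logical skeleton is identical to the paper's.
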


In \cite{BW05}, this is proposed as a counterexample by F. R. Beyl and N. Waller and so this answers their question in the negative (see also \cite[p23]{MR18}).

\begin{proof}
Note that $Q_{28}$ is in (iii) and so has $\sigma_4(Q_{28})=0$ by \cref{thm:finiteness-obstruction-computation}. Hence, as in the proof of \cref{thm:D2-2}, it suffices to show that the injective map of graded trees $\Psi \circ \wt C_* : \PHT(Q_{28},2) \to [\Z Q_{28}]$ is in fact bijective.
By the discussion in \cref{section:D2-overview}, $\PHT(Q_{28},2)$ and $[\Z Q_{28}]$ are both forks. By \cite[Theorem III]{Sw83}, there are two rank one stably free $\Z Q_{28}$ modules and so $[\Z Q_{28}]$ has two vertices at the minimal level. By \cref{thm:mannan-popiel}, $PHT(Q_{28},2)$ has at least two vertices at the minimal level. Hence the injective map $\Psi \circ \wt C_*$ must be bijective, and so $Q_{28}$ has the D2 property.
\end{proof}		

It should be possible to replicate this proof for other examples of groups with $4$-periodic cohomology and $m_{\H}(G) \ge 3$. We expect that the quaternion groups $Q_{4n}$ contain the main difficulties associated with the case $m_{\H}(G) \ge 3$, and so we ask:

\begin{question}
Does $Q_{4n}$ have the {\normalfont D2} property for all $n \ge 2$?
\end{question}

The case $Q_{32}$ is of particular significance since it was the first proposed counterexample to the D2 problem \cite[p415]{Co77}.

\section*{Acknowledgements}
 I would like to thank my supervisor F. E. A. Johnson for many interesting conversations on the D2 problem and stable modules, W. H. Mannan for drawing my attention to his recent work on exotic presentations for $Q_{28}$ and Jonathan Hillman for many helpful comments. I am also indebted to an anonymous referee for their careful reading and many useful suggestions. In particular, they suggested I give an explicit form for the map $\Psi$ from \cref{thm:main1}, which can now be found in \cref{prop:Psi-explicit}. 
 This work was supported by the UK Engineering and Physical Sciences Research Council (EPSRC) grant EP/N509577/1.

\end{document}